\newtheorem{theorem}{\noindent Theorem}%[section]
\newtheorem{lemma}[theorem]{\noindent Lemma}%[section]
\newtheorem{corollary}[theorem]{\noindent Corollary}%[section]
\newtheorem{proposition}[theorem]{\noindent Proposition}%[section]
\theoremstyle{remark}
\newtheorem{remark}[theorem]{\it Remark}%[section]
\newcommand{\al}{\alpha}
\newcommand{\de}{\delta}
\newcommand{\ep}{\varepsilon}
\newcommand{\lam}{\lambda}
\newcommand{\ga}{\gamma}
\newcommand{\De}{\Delta}
\newcommand{\Ga}{\Gamma}
\newcommand{\beq}{\begin{eqnarray*}}
\newcommand{\eeq}{\end{eqnarray*}}
\newcommand{\beqn}{\begin{equation}}
\newcommand{\eeqn}{\end{equation}}
\newcommand{\ds}{{\rm ds}}
\newcommand{\n}{\noindent}
\newcommand{\pf}{\n{\it Proof. \,}}
\def\v2{\vskip2mm}
\begin{document}
\begin{center}
{\Large Estimates of potential functions of 
  random walks on $\mathbb{Z}$\\
with zero mean and infinite variance and their applications }
\vskip6mm
{K\^ohei UCHIYAMA} \\
\vskip2mm
{Department of Mathematics,\\ Tokyo Institute of Technology} \\
{Oh-okayama, Meguro Tokyo 152-8551\\
%e-mail: \,ko-hei-3@math.titech.ac.jp
}
\end{center}

\vskip6mm

\begin{abstract}
%We consider
Let $S_n =X_1+\cdots +X_n$ be  an irreducible   random walk (r.w.) on the one dimensional integer lattice with zero mean, infinite variance and 
 i.i.d. increments $X_n$. We obtain an upper and lower bounds of the potential function, $a(x)$, of  $S_n$ in the form $a(x)\asymp x/m(x)$ under a reasonable condition on the distribution of $X_n$; we especially show  that   as $x\to\infty$
$$a(x) \asymp \frac{x}{m_-(x)} \quad\mbox{and}\quad   
\frac{a(-x)}{a(x)} \to 0 \quad\;\;\mbox{if}\quad  \lim_{x\to +\infty} \frac{m_+(x)}{m_-(x)} =0,$$
 where  $m_\pm(x) = \int_0^xdy\int_y^\infty P[\pm X_1>u]du$ and $m=m_++m_-$.
Under certain conditions on the tails of the distribution  of $X$  we derive precise asymptotic forms of $a(x)$ as $x\to +\infty$ or/and $-\infty$. The results are applied  to derive a sufficient condition for the relative stability of the ladder height and  estimates of some escape probabilities from the origin; we show among others that under the above condition on $m_+/m-$,  $P[S_n>0] \to 1/\alpha$  if and only if   the  probability  of exiting   a long interval $[-Q,R]$ through  the upper boundary converges to $\lambda^{\alpha-1}$ as $Q/(Q+R) \to \lambda$ for any $0<\lambda<1$.
\vskip8mm \noindent
\textbf{Key words}: recurrent r.w.;  potential function;  infinite variance; escape probability;  two-sided exit  problem; relatively stable; ladder height\\
\textbf{  AMS MSC (2010)}: Primary 60G50,  Secondary 60J45.
 \end{abstract}

\section{Introduction and Statements of Results}
We study asymptotic properties of the potential function $a(x)$ of a recurrent random walk (r.w.) on the integer lattice $\mathbb{Z}$ with independent and identically distributed (i.i.d.) increments. Denote by $F$  the common distribution  function  of the increments  and let $X$  be a random variable having the distribution  $F$. Let $(\Omega, {\cal F}, P)$ be the probability space on which $X$, as well as the r.w., is defined, and let  $E$ be the expectation  by $P$. Then $a(x)$ is defined by
\[
a(x) = \sum_{n=0}^\infty  \big(P[S_n=0]-P[S_n=-x]\big),
\]
where $S_n$ denotes the r.w.  started at $0$; the infinite series
on the RHS is convergent, and  
if $\sigma^2= E X^2<\infty$ then $a(x+1)-a(x) \sim \pm 1/2\sigma^2$ as $x\to \pm \infty$ ($\sim$ designates that the ratio of two  sides of it approaches  unity), whereas in case $\sigma^2=\infty$ the known result on the behaviour of $a(x)$ for large values of $|x|$ is less precise: it  says  only that 
$
a(x+1)-a(x)\to 0
$
 and 
\beqn\label{bar _a}
\bar a(x) :=\frac12 \big[a(x)+a(-x)\big] \to \infty
\eeqn
as $|x|\to\infty$ (cf. \cite[Sections 28 and 29]{S}),  although certain explicit asymptotic forms of $a(x+1)-a(x)$ and/or $\bar a(x)$ are given if  $F$ satisfies 
some specific condition like  regular variation of its tails (cf. \cite{Belk}, \cite{Uattrc}, Section 8.1.1).
 In this paper,  we suppose that the r.w. is irreducible (as a Markov chain on $\mathbb{Z}$) and that
\beqn\label{X0}
 EX =0\quad\mbox{and}\quad \sigma^2=\infty,
 \eeqn
unless the contrary is stated explicitly. [We shall include several results for a general r.w. on $\mathbb{Z}$ with $E|X|=\infty$ that can be even not recurrent.] We obtain an expression of the growth rate of $\bar a$ in terms of a certain functional of $F$ for a large class of $F$ and 
observe that if the right-hand tail of $F$ is asymptotically negligible \lq in average' in comparison to  the left-hand tail, then
$a(x)/a(-x) \to0$.
%(Proposition \ref{prop6.1}, Lemma \ref{lem6.8}, Section 8.1.2)
The potential function $a(x)$ plays a fundamental role in the potential theory of the recurrent r.w., especially for the r.w. killed on hitting the origin, the Green function of the killed r.w. being expressed as $g(x,y)=a(x)+a(-y)-a(x-y)$. We shall give some applications of the results based on this fact, mostly in the case when one tail of $F$ is negligible relative to the other. For the previous works on $a(x)$ and related problems readers are referred to \cite{K3}, \cite{Uladd} as well as to \cite{S}.   % to the walk killed on hitting the origin.
%, which is significant for the sturdy of the walk. 

To state the results of the paper, we need   the following functionals of $F$ that also bear a great deal of relevance to our analysis:  for $x\geq 0$, $t\geq 0$ \; 
  \beq
  \mu_-(x) = P[X<-x],\; \; \mu_+(x) = P[X> x],\;  \; &&\mu(x)=\mu_-(x)+\mu_+(x);\\
  \eta_\pm(x) =\int_x^\infty \mu_\pm(y)dy,\qquad\qquad &&\eta(x) = \eta_-(x)+\eta_+(x);\\
  c_\pm(x) = \int_0^x y\mu_\pm(y)dy \qquad\qquad && c(x) = c_+(x)+c_-(x);\\
 m_\pm(x) =\int_0^x \eta_\pm(t)dt, \qquad\qquad && m(x) = m_+(x)+m_-(x);\\
\al_\pm(t)= \int_0^\infty \mu_\pm(y) \sin ty\,dy, \qquad &&\al(t) = \al_+(t)+\al_-(t);\\
  \beta_\pm(t) = \int_0^\infty \mu_\pm (y) (1-\cos ty)dy, \quad  
  && \beta(t) = \beta_+(t)+\beta_-(t);\\
  \ga(t)= \beta_+(t)-\beta_-(t) = - E\sin\, tX;
  \eeq
  \[
  \mbox{and}\qquad \psi(t) = E e^{itX}. \qquad\qquad
  \]
  An integration by parts shows 
  $$m_{\pm}(x) =c_\pm(x)+ x\eta_\pm(x).$$
In the above formulae,  $x$ designates a real number, whereas $x$ is always  an  integer in  $a(x)$;  we shall not mention  which usage is adopted if  this duplicity will  cause  no confusion.    
 \vskip2mm\noindent
   %Thm1
\begin{theorem}\label{th:1} \, 
For some universal constant $C_*>0$
\[
\bar a(x) \geq C_* x/m(x)  \qquad \mbox{for all sufficiently large} \quad x\geq 1.
\]
\end{theorem}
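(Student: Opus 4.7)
My approach is Fourier-analytic. Starting from the spectral representation for a recurrent irreducible walk,
\begin{equation*}
\bar a(x) \,=\, \frac{1}{\pi}\int_0^\pi (1-\cos x\theta)\,\mathrm{Re}\,\frac{1}{1-\psi(\theta)}\,d\theta,
\end{equation*}
I would first decompose $1-\psi(\theta)$ into real and imaginary parts in terms of the functionals introduced above. Integration by parts applied separately to the integrals defining $1-\mathrm{Re}\,\psi$ and $-\mathrm{Im}\,\psi$, together with $EX=0$ to cancel the first-moment contribution to the imaginary part, yields
\begin{equation*}
1-\psi(\theta)\,=\,\theta\alpha(\theta)+i\theta\gamma(\theta),\qquad
\mathrm{Re}\,\frac{1}{1-\psi(\theta)}\,=\,\frac{\alpha(\theta)}{\theta\bigl[\alpha(\theta)^2+\gamma(\theta)^2\bigr]}.
\end{equation*}

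Next I would localize. On the window $\theta\in[\pi/(2x),\pi/x]$ one has $1-\cos x\theta\geq 1$ and $1/\theta\in[x/\pi,2x/\pi]$, whence $m(1/\theta)\asymp m(x)$ with universal constants (since $m$ is increasing and satisfies $m(2y)\leq 2m(y)$). Thus the target bound $\bar a(x)\geq C_* x/m(x)$ will follow from a pointwise estimate of the form $\alpha(\theta)/[\alpha(\theta)^2+\gamma(\theta)^2]\geq c/[\theta m(1/\theta)]$ on this window. The easier half of such an estimate is a pair of upper bounds: splitting the defining integrals of $\alpha$ and $\beta$ at $y=1/\theta$, applying $|\sin t|\leq\min(t,1)$ and $1-\cos t\leq\min(t^2/2,2)$, and invoking $m(y)=c(y)+y\eta(y)$, one obtains $\alpha(\theta)\leq C_1\theta m(1/\theta)$ and $|\gamma(\theta)|\leq\beta(\theta)\leq C_2\theta m(1/\theta)$. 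Consequently $\alpha^2+\gamma^2\leq C[\theta m(1/\theta)]^2$, and the pointwise estimate reduces to the matching \emph{lower} bound $\alpha(\theta)\geq c\,\theta m(1/\theta)$.

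This lower bound on $\alpha$ is the main obstacle. A second-mean-value argument for $\int_0^\infty\mu(y)\sin\theta y\,dy$, exploiting the monotonicity of $\mu=\mu_++\mu_-$, produces an estimate of the shape $\alpha(\theta)\geq c_1\theta c(1/\theta)-c_2\eta(1/\theta)$; because $\theta m(1/\theta)=\theta c(1/\theta)+\eta(1/\theta)$, this suffices in the regime $\theta c(1/\theta)\gtrsim\eta(1/\theta)$. In the opposite ``tail-dominant'' regime the estimate is vacuous, and one must instead exploit the zero-mean condition $\int\mu_+=\int\mu_-$ directly: cancellation making $\alpha=\alpha_++\alpha_-$ small should force a comparable cancellation making $\gamma=\beta_+-\beta_-$ small, so that the quotient $\alpha/(\alpha^2+\gamma^2)$ retains the required magnitude even when both $\alpha$ and $\gamma$ are individually small. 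Carrying out this case analysis—perhaps by further restricting the $\theta$-window to a dyadic subinterval on which $\mu_\pm\sin\theta y$ admit clean monotone comparisons, or by averaging the quotient over $\theta$ rather than bounding it pointwise—constitutes the principal technical task in the proof.
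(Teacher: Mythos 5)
Your setup and diagnosis of the obstacle are correct: the representation $\bar a(x)=\pi^{-1}\int_0^\pi \frac{\alpha(t)}{[\alpha^2(t)+\gamma^2(t)]t}(1-\cos xt)\,dt$, the upper bound $\alpha^2+\gamma^2\leq C\,[t\,m(1/t)]^2$ (which the paper gets from $\alpha(t)+\beta(t)\leq 3 m(1/t)t$ and $|\gamma|\leq\beta$), and the observation that everything would follow from a \emph{pointwise} lower bound $\alpha(t)\gtrsim t\,m(1/t)$, which however is unavailable in the tail-dominant regime $c(1/t)\ll m(1/t)$. Up to that point you have reconstructed the paper's reductions.

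However, the repair you propose for the hard case is based on a false principle, and this is a genuine gap. You write that ``cancellation making $\alpha=\alpha_++\alpha_-$ small should force a comparable cancellation making $\gamma=\beta_+-\beta_-$ small.'' But $\alpha_\pm(t)$ are each \emph{nonnegative} (by monotonicity of $\mu_\pm$ and the second mean value theorem), so no cancellation between $\alpha_+$ and $\alpha_-$ occurs; $\alpha$ is small only because the oscillatory integral $\int\mu\,\sin tz\,dz$ self-cancels when $c(1/t)/m(1/t)$ is small. Meanwhile, by the two-sided bound $\frac13 m(1/t)t\leq \alpha(t)+\beta(t)\leq 3m(1/t)t$ (Lemma~\ref{lem2.0}), smallness of $\alpha$ forces $\beta$ to be of order $m(1/t)t$, and $|\gamma|$ is then free to be as large as $\beta$ (e.g.\ when $\mu_+\ll\mu_-$). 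There is no mechanism forcing $\gamma$ small, so your quotient $\alpha/(\alpha^2+\gamma^2)$ really can collapse pointwise; compare the hypothesis of Theorem~\ref{th:1_2}(ii), which characterizes exactly this possibility.

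The actual resolution in the paper abandons the pointwise bound on the short window $[\pi/2x,\pi/x]$ and instead integrates over the long range $[\pi/2x,1]$. The lower bound $\alpha(t)\geq \int_0^{2\pi/t}\mu(z)\sin tz\,dz$ is split into three pieces. The piece $\int_0^{\pi/2t}$ contributes $\geq \tfrac56\,tc(1/t)$, and integrating $f(t)c(1/t)/t$ over $[\pi/2x,1]$ gives $\geq \tfrac{5}{3\pi}\,x/m(x)-C$ via the exact identity $(x/m(x))'=c(x)/m^2(x)$. The oscillatory piece $\int_{\pi/2t}^{2\pi/t}$ is negative; after Fubini and one integration by parts it is shown to be $\geq -\tfrac{4}{3\pi}\,x/m(x)-C$, and the $\cos xt$ term is handled by monotonicity and oscillation arguments to be $\geq -O(1/x)$. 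The difference $\tfrac{5}{3\pi}-\tfrac{4}{3\pi}>0$ closes the argument. Your final hedge (``averaging the quotient over $\theta$ rather than bounding it pointwise'') is gesturing at the right idea, but the essential content --- the quantitative comparison of constants between the positive and oscillatory contributions, which is far from obvious --- is entirely missing from the proposal.
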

\v2
This theorem plainly entails the implication
\beqn\label{Z/a}
\sum \bar a(x) P[X>x] <\infty \;\Longrightarrow\; \int_1^\infty \frac{x P[X>x]}{m(x)}dx<\infty\eeqn
which bears immediate relevance to  criteria for the summability  of   the first ascending ladder height, $Z$ say.
 The integrability condition on the RHS of (\ref{Z/a}), which implies  $m_+(x)/m(x)\to 0$ under $\sigma^2=\infty$ as is easily seen,  is equivalent to  the necessary and sufficient condition   for $EZ<\infty$ due to Chow \cite{Cho}. (\ref{Z/a})  is verified in \cite{Unote}  and constitutes 
 one of the key observations  there that lead to  the following equivalence relations
 $$\sum \bar a(x) P[X>x] <\infty \; \Longleftrightarrow \; EZ<\infty  \; \Longleftrightarrow \;  \int_1^\infty \frac{x P[X>x]}{m_-(x)}dx<\infty$$
 without recourse of Chow's result (see Section 4 (especially Lemma 4.1) of \cite{Unote}).
 
%  The integrability condition on the RHS entails  $m_+(x)/m(x)\to 0$ under $\sigma=\infty$ as is easily seen  (thus equivalent to  $EZ<\infty$ according to Chow's criterion\cite{Cho} for the summability of   the first ascending ladder height of $S$).  
  
 % while   the summability condition on the LHS follows from $EZ<\infty$ \cite[Remark 1/1(b)]{Uladd}. This shows the following 
%$$\sum \bar a(x) P[X>x] <\infty \; \Longleftrightarrow \; EZ<\infty  \; \Longleftrightarrow \;  \int_1^\infty \frac{x P[X>x]}{m(x)}dx<\infty.$$
%These equivalence relations are observed in \cite{Unote} without recourse of Chow's result; because of   the implication (\ref{Z/a})  follow  only from  Lemma 4.1 of \cite{Unote} which  shows that the summability condition on the LHS of (\ref{Z/a})  together with  $m_+(x)/m(x)\to 0$  implies $EZ<\infty$).  
 
 We bring in the following condition to  obtain  some  upper estimates of $\bar a(x)$:
\[
{\rm (H)} \qquad \qquad{\displaystyle  \delta_H:=\liminf_{t\downarrow 0} \frac{\al(t)+ |\ga(t)| }{\eta(1/t)} >0}.\qquad \qquad
\]
Note that  $1-\psi(t)=t \al(t) + it\ga(t)$.

 %Thm2
\begin{theorem}\label{th:1_2} \,
 {\rm (i)}\, If  condition (H)
holds,  then for some  constant $C_H^*$ depending only on $\delta_H$, 
\[ 
  \bar a(x) \leq   C_H^* \, x/m(x)\qquad  \mbox{for all sufficiently large} \quad x\geq 1.
  \]
  \v2
   {\rm (ii)} \; If \;\;  ${\displaystyle \frac{\al(t)+|\ga(t)|}{t\!\;m(1/t)} \to 0}$ ($\, t\downarrow 0$),\; then \;
    ${\displaystyle \frac{\bar a(x)}{x/m(x)} \to\infty}$ ($x\to\infty$).  
       [The converse is not true; a counter example  will be give at the end of Section 8.2.] 
\end{theorem}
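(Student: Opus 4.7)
The plan is to analyze both parts through the Fourier inversion formula
\[
\bar a(x)\;=\;\frac{1}{\pi}\int_0^\pi (1-\cos tx)\,\frac{\alpha(t)}{t[\alpha(t)^2+\gamma(t)^2]}\,dt,
\]
obtained by taking the real part of the standard representation $a(x)=\frac{1}{2\pi}\int_{-\pi}^\pi (1-e^{-itx})/(1-\psi(t))\,dt$ and averaging over $x\leftrightarrow -x$; the integrand is pointwise non-negative because $t\alpha(t)=E[1-\cos tX]\ge 0$.

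For (i), I would first record the universal upper bounds $\alpha(t)\le tm(1/t)$ and $|\gamma(t)|\le 3\,tm(1/t)$, which follow by splitting the defining integrals at $y=1/t$ using $|\sin\theta|\le\min(\theta,1)$ and $1-\cos\theta\le\min(\theta^2/2,2)$, together with the identity $tm(1/t)=tc(1/t)+\eta(1/t)$ and the elementary estimate $\int_0^a y^2\mu(y)\,dy\le 2a\,m(a)$. Under (H) I would then prove the matching lower bound $\alpha(t)^2+\gamma(t)^2\asymp (tm(1/t))^2$ by a dichotomy: in the regime $tc(1/t)\gtrsim\eta(1/t)$ (so $tm\asymp tc$), the inequality $\sin\theta\ge(2/\pi)\theta$ on $[0,\pi/2]$ yields $\alpha(t)\gtrsim tc(\pi/(2t))-\eta(\pi/(2t))\gtrsim tm(1/t)$; in the regime $tc(1/t)\lesssim\eta(1/t)$ (so $tm\asymp\eta$), hypothesis (H) forces $\alpha+|\gamma|\gtrsim\eta\asymp tm$. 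This comparison bounds the Fourier integrand by $C/[t^2m(1/t)]$, and the substitution $s=1/t$ reduces the task to
\[
\int_{1/\pi}^\infty\frac{1-\cos(x/s)}{m(s)}\,ds\;\le\;C\,\frac{x}{m(x)}.
\]
Splitting at $s=x$ with $1-\cos(x/s)\le\min(x^2/s^2,2)$, the tail piece is controlled by the monotonicity of $m$, while the bulk piece $\int_{1/\pi}^x ds/m(s)\le Cx/m(x)$ follows by a Karamata-type dyadic summation exploiting $s\eta(s)\le m(s)$ and the monotonicity of $m(s)/s$.

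For (ii), the hypothesis $(\alpha+|\gamma|)/\eta(1/t)\to 0$, combined with the refined lower bound $\alpha(t)\ge(2t/\pi)c(\pi/(2t))-\eta(\pi/(2t))$, forces $tc(1/t)=O(\eta(1/t))$, hence $tm(1/t)\asymp\eta(1/t)$, and moreover $\alpha+|\gamma|=o(tm(1/t))$. The lower comparison of (i) therefore degrades to $\alpha(t)^2+\gamma(t)^2=o((tm(1/t))^2)$, while the universal upper bound $\alpha\le tm$ persists. Restricting the Fourier integral to $t\in[1/(2x),1/x]$, where $1-\cos tx$ is bounded below, and using the degradation pointwise should give $\bar a(x)/[x/m(x)]\to\infty$. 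The main obstacle will be the regime $|\gamma(t)|\gg\alpha(t)$: there $\alpha$ in the numerator may shrink even faster than $\alpha^2+\gamma^2$ in the denominator, and the naive pointwise estimate of the integrand is lost. I would handle this either by leveraging the refined lower bound on $\alpha$ to constrain the ratio $\alpha/(\alpha+|\gamma|)$ from below, or by a contradiction argument: if $\bar a(x_n)m(x_n)/x_n$ were to stay bounded along some sequence $x_n\to\infty$, the Fourier integral would force $\alpha(t)^2+\gamma(t)^2\asymp(tm(1/t))^2$ on a related range of $t$-values, contradicting the hypothesis of (ii).
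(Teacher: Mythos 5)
Your overall framework for (i) --- the Fourier representation (\ref{**a}), the sandwich $\alpha^2(t)+\gamma^2(t)\asymp (tm(1/t))^2$ under (H), reduction to a scalar integral estimate --- matches the skeleton of the paper's proof. But there are genuine gaps in both parts.

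\medskip
\noindent\textbf{Part (i).} The step that fails is the bulk-piece bound $\int_{1/\pi}^x ds/m(s)\le C\,x/m(x)$. This is false in general. Take $F$ spectrally negative with $\mu_-(y)\sim 1/(y(\log y)^2)$: then $\eta_-(x)\sim 1/\log x$ is slowly varying, $m(x)\sim x/\log x$, $c(x)\sim x/(\log x)^2$, condition (H) holds (since $|\gamma|=\beta_-\asymp tm(1/t)$), and
\[
\int_1^x\frac{ds}{m(s)}\sim\tfrac12(\log x)^2\qquad\text{while}\qquad \frac{x}{m(x)}\sim\log x.
\]
The spurious logarithm comes from using the weak bound $\alpha(t)\le tm(1/t)$. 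The paper works instead with the bound $\alpha(t)\le\pi^2 tc(1/t)$ from Lemma~\ref{lem2.00}, so the integrand becomes $c(1/t)/(t^2m^2(1/t))$, and the whole estimate closes cleanly via the exact-derivative identity (\ref{*c}), $\bigl(x/m(x)\bigr)'=c(x)/m^2(x)$: the bulk contribution is $\int_1^x c(s)\,ds/m^2(s)=\bigl[s/m(s)\bigr]_1^x\asymp x/m(x)$, with no loss. In the example above $c(x)/m(x)\to 0$, which is precisely where the discrepancy between $c$ and $m$ kills the naive estimate. (A secondary point: in your dichotomy the implicit constant in ``\,$tc(1/t)\gtrsim\eta(1/t)$\,'' must exceed $\pi/2$, otherwise $\tfrac{2}{\pi}tc(1/t)-\eta(1/t)$ can be negative; with care this is salvageable, and the paper's own route via $h_\varepsilon$ and Lemmas~\ref{lem2.3}, \ref{cor2.1}, \ref{lem2.6} gives sharper constants. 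But this is not the main problem.)

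\medskip
\noindent\textbf{Part (ii).} You correctly identify the obstacle --- in the regime $|\gamma|\gg\alpha$ the pointwise lower bound is lost --- but neither proposed repair works. No lower bound on $\alpha/(\alpha+|\gamma|)$ is available: as Section~6.2 shows, $\alpha(t)/[tc(1/t)]$ can oscillate down towards $0$ infinitely often, so on a single window $[1/(2x),1/x]$ the integrand can be genuinely small. And the contradiction sketch is not a proof: boundedness of a single quantity $\bar a(x_n)m(x_n)/x_n$ does not localize the behaviour of $\alpha^2+\gamma^2$ at any particular $t$-scale.

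What the paper actually uses is the \emph{averaged} lower bound (\ref{prT1}),
\[
\int_{\pi/2x}^{1}f(t)\,\alpha(t)\,\frac{1-\cos xt}{t}\,dt\;\ge\;\frac{1}{3\pi}\cdot\frac{x}{m(x)}-C,
\]
which is the hard content of the proof of Theorem~\ref{th:1} (the decomposition $K_I+K_{I\!I}+K_{I\!I\!I}$ and Lemmas~\ref{lem2}--\ref{lem4}), valid with no hypothesis beyond $EX=0$. This says that although $\alpha(t)$ may dip far below $tc(1/t)$ on some intervals, the weighted integral against $f=1/(t^2m^2(1/t))$ is still large. Given (\ref{prT1}), part (ii) is short: the hypothesis forces $f^\circ(t)/f(t)\to\infty$, so for any $\varepsilon>0$ one has $f^\circ\ge\varepsilon^{-1}f$ on $(0,\delta)$, hence $\bar a(x)\ge A\varepsilon^{-1}\int_{\pi/2x}^\delta f\alpha(1-\cos xt)\,dt/t$, and (\ref{prT1}) together with the boundedness of $\int_\delta^1$ finishes. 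Without (\ref{prT1}) (or some equivalent averaged estimate), your sketch cannot be carried through.
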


\vskip2mm
According to Theorems \ref{th:1} and \ref{th:1_2},  if (H) holds, then $\bar a(x) \asymp x/m(x)$ as $x\to\infty$ (i.e., the ratio of two sides of it is bounded away from zero and infinity), which  entails 
some regularity of $\bar a(x)$ like its being  almost increasing (i.e.,   $\bar a(y)/ \bar a(x)$ is bounded away from zero if $y>x$ and $x$ is large enough), while in general  $\bar a(x)$ may behave very irregularly  as will be exhibited by an  example  (see Section 8.2).

For condition (H) to hold  each of the following conditions (\ref{H2}) to (\ref{H0}) is sufficient:
         \beqn\label{H2}\limsup_{x\to\infty} \;\frac{\mu_+(x)}{ \mu_-(x)} <1;
     \eeqn
      \beqn\label{H1}
    \limsup_{x\to\infty} \frac{x\eta_+(x)}{m_+(x)} < 1 \;\; \mbox{or} \;\;     \limsup_{x\to\infty} \frac{x\eta_-(x)}{m_-(x)} < 1  \;\; \mbox{or} \; \;   \limsup_{x\to\infty} \frac{x\eta(x)}{m(x)} < 1;
    \eeqn
  \beqn\label{H3}
  \frac{m_+(x)}{m(x)} \;\; \mbox{converges as $x\to \infty$ to a number}\;\neq \frac12; 
    \eeqn 
   \beqn\label{H0}
  \limsup_{x\to\infty} \frac{x[\eta_+(x)\wedge \eta_-(x)]}{m_+(x) \vee m_-(x)} < \frac14.
  \eeqn
The last  condition (\ref{H0}) ensures  (H) at least under $x\eta_+(x)/m(x)\to 0$ which can be true even when
$\limsup m_+(x)/m(x)=1$. 

  That  (\ref{H2}) is sufficient for (H) follows from  $\liminf_{t\downarrow 0} \beta(t)/\eta(1/t) >0$ (cf.\,(\ref{m1}b)) since $\liminf_{t\downarrow 0} |\ga(t)|/ \beta(t) >0$ under (\ref{H2}).
       The sufficiency of the   conditions (\ref{H1}) and (\ref{H0})
  will be  verified in Section 2 (Lemmas \ref{Lem20} and \ref{Lem16} for (\ref{H1}) and Lemma \ref{Lem21} for (\ref{H0})); as for (\ref{H3}) see Remark \ref{Rem29}. 
  % It is noticeable  that the latter two are   symmetric with respect to the distribution of $X$.     
  
 % There is a gap between the assumptions of  (i) and (ii) of Theorem \ref{th:1_2}. %Examining  various examples led the present author  to conjecture that condition  (H) is the correct one  for $\bar a(x) \leq C x/m(x)$. 
 % $$\limsup {x\eta(x)}/{m(x)} <1\quad \mbox{or}\quad \liminf_{t\downarrow0} |\ga(t)|/tm(1/t) >0,$$
 We do not know whether (H) is necessary for $\bar a(x)m(x)/x$ to be bounded. 
 The next result entails that when 
   the distribution of $X$ is nearly symmetric in the sense that the limit in (\ref{H3}) equals $1/2$, namely
   \beqn\label{eta+/-}
   m_-(x)/m_+(x)\to 1 \quad\mbox{as}\quad  x\to\infty,
   \eeqn
   condition (H) is  necessary as well as sufficient in order that  $\bar a(x)$ is comparable to  $x/m(x)$.
       
 %Thm3
\begin{theorem}\label{th:1_3} \,Suppose  (\ref{eta+/-}) to hold. Then 
\v2
{\rm (i)}  $\lim_{t\downarrow 0}\ga(t)/[t\;\!m(1/t)] = 0$; and 
\v2
{\rm (ii)}  each of the three inequalities in the disjunction  (\ref{H1}) is  necessary (as well as sufficient)   in order that $\limsup_{x\to\infty}  \bar a(x)m(x)/x <\infty$.\end{theorem}  

%\v2\noindent
%Cor4
\begin{corollary}\label{COR4} Under (\ref{eta+/-}) \, $ \limsup \bar a(x)m(x)/x <\infty$ \,  if and only if  (H) holds.
\end{corollary}
\noindent
\pf\, If $ \limsup \bar a(x)m(x)/x <\infty$, then by Theorem \ref{th:1_3} $\limsup x\eta(x)/m(x)<1$, or equivalently, $\liminf c(x)/m(x)>0$, which implies   $\liminf \alpha(t)/\eta(1/t) >0$ as we shall see  (Lemma \ref{Lem16}), whence (H) holds. The converse follows from Theorem \ref{th:1_2}.  \qed

\v2

 Practically in most cases, Theorem \ref{th:1_3} together with (\ref{H2}) to (\ref{H0}) provides the criterion, expressed in terms of $\mu_\pm$, $m_\pm$ or/and $\eta_\pm$, to judge whether the condition (H) holds.  
  If $F$ is in the domain of attraction of a stable law the result is simplified  so that  $\lim  \bar a(x)m(x)/x =\infty$ if  
  $x\eta(x)/m(x)\to 1$ and  $m_-(x)/m_+(x) \to 1$;  otherwise $\bar a(x)\sim C x/m(x)$  with $C>0$  (see   Proposition \ref{Prop61}(i, iv) and Remark \ref{Rem62}(ii) of Section 8). \v2
%Prop5
\begin{proposition} \label{Prop5} \, If (H) holds, then for $0 < x < 2R$,
$$\bigg| 1- \frac{\bar a(x)}{\bar a(R)}\bigg| \leq C\bigg(1-\frac{x}{R}\bigg)^{1/4}$$
for some constant $C$ that depends only on $\de_H$.
\end{proposition}

From  Proposition  {\ref{Prop5} it  plainly follows   that $\bar a(x)/\bar a(R) \to 1$ as  $x/R\to 1$ if (H) holds, which fact, not trivial, is what we need for some of our applications. % even if the condition $m_+/m\to 0$ is assumed.  %The exponent $1/4$ can be replaced by  a greater number.

In the applications as well as  the next result we consider the r.w. $S$ under  the  condition   
 %$\beqn\label{basic}
 $ m_+(x)/m_-(x) \to 0$ as $ x\to+\infty$, which  for simplicity  we write
   $m_+/m \to0$. (Similar conventions  will also apply to $\eta_+/\eta$, $c/m$, etc.)
%Thm6
\begin{theorem}\label{Thm6} \, 
Suppose $m_+/m\to 0$.  Then $\delta_H =1$ in {\rm (H)} and
\[
\frac{a(-x)}{a(x)} \to 0\quad \mbox{as} \quad x\to +\infty;
\]
in particular $a(x) \sim 2\bar a(x) \asymp x/m(x)$.
\end{theorem}

In the proof of Theorem \ref{Thm6} we shall see that if  $m_+/m\to0$ and
$$b_\pm(x)  =
\frac1{\pi}\int_0^\pi \frac{t\beta_\pm(t)\sin xt}{|1-\psi(t)|^2}dt,$$
then $a(x)\sim b_-(x)$ and  $b_+(x) = o\big(b_-(x)\big)$, which together  indeed imply  $a(-x)/a(x)\to 0$. 

\v2
The applications in this paper of the theorems stated above  are made mostly in case $a(-x)/a(x)\to0$. It is significant to consider the r.w. $S$ under   $\lim\inf_{x\to\infty} a(-x)/a(x)>0$, but we do not know when this condition holds and it is interesting to find it:   
\v2
{\bf Problem.}  {\it Find a reasonable sufficient condition for $\de_*:=\lim\inf_{x\to\infty} a(-x)/a(x)>0$.}
\v2
\noindent
Even if $\liminf (1-F(x))/F(-x)>0$   we do not know  any answer to the above problem except in cases where we can compute the asymptotic form of $a(x)$ more or less explicitly as given in Theorem \ref{thm7} below. In Section 8.1.1 we shall give a criterion for $\de_*>0$ when $F$ is attracted to a stable law by some explicit evaluation of the asymptotics of $a(x)$ as $|x|\to\infty$.
 
 When $F$ is relatively stable,  one can obtain some exact asymptotic form of $a$. $F$ is called {\it relatively stable (r.s.)} if there exists a (non-random) sequence $B_n$ such that  $S_n/B_n \to 1$ in probability. Here  $B_n$ is necessarily either  ultimately positive or ultimately negative. In the positive case we call $F$ {\it positively r.s.} after \cite{KM0}. 
  Put
 \beqn\label{A}
 A(x) = \int_0^x \big[ \mu_+(y)-\mu_-(y)\big]dy.
 \eeqn
 According to \cite{KM0} (see around Eq(1.15)  of it),   
  in order that $F$ is positively r.s. it is necessary and sufficient that
\beqn\label{PRS}
A(x)/x\mu(x) \to\infty.
\eeqn  
This implies $A$ is slowly varying (s.v.) at infinity (in Karamata's sense:see \cite{BGT}).
If  $F$ belongs to a domain of attraction of a stable law of exponent $\alpha \in(0,2]$, (\ref{PRS}) holds  if and only if $\alpha=1$ and $P[S_n>0]\to 1$.
(See Remark \ref{Rem11} and Section 8.2 for related matters.) What are mentioned right above are valid for any r.w. with $P[|X|> x]> 0$ for every real number $x>0$.  Under 
(\ref{PRS}) the function $A$ is positive some point on and in the sequel let $x_0$ be a positive integer such that
$A(x)>0$ for $x\geq x_0$.  In the next theorem, we do not assume the summability of $X$. 
 %Thm7
\begin{theorem}\label{thm7} Suppose that (\ref{PRS}) holds. Then both $a(x)$ and $\bar a(x)$ are s.v. at infinity and 
as $x\to\infty$,  
$$a(x)-a(-x) \sim 1/{A(x)}, $$
$$a(x) \sim \int_{x_0}^x\frac{\mu_-(y)}{A^2(y)}dy\quad\mbox{and}\quad a(-x) = \int_{x_0}^x\frac{\mu_+(y)}{A^2(y)}dy +o(a(x)).$$
\end{theorem}
\v2
When  $F$ is transient and satisfies (\ref{PRS}), an asymptotic result analogous to the above for the Green function of the r.w. is given in \cite[Theorem; Remark 7]{Urenw} and Theorem \ref{thm7} is the counterpart to it for the recurrent walk.

We apply Theorems \ref{th:1}, \ref{th:1_2} and \ref{Thm6} to obtain some asymptotic estimates of the upwards overshoot distribution of the r.w. over a high level, $R$ say. 
We also make the application to estimate the probability of escaping the origin, i.e., the one that the r.w. $S$ goes beyond the level $R>1$ or both  $R$ and $-R$ without hitting zero. 
  Many works study the overshoot or the matters related to it   \cite{R}, \cite{GOT}, \cite{DM}, \cite{K2}, etc. for $X$ belonging to the domain of attraction of a stable law, while the escape probabilities---which appear in the limit theorem for the sojourn time distribution (see (\ref{KL1}))---have rarely been investigated. In any way, it  seems  hard to obtain a sharp estimate of these things  in a general setting.  Under  the condition  $m_+/m\to 0$, however,  the theorems above are effectively used to yield natural results. The overshoot distribution is related to the relative stability of the ladder height variable. 
We shall obtain a sufficient condition for the relative stability of the ladder height (Proposition \ref{Prop40}) and  the asymptotic estimates of the escape probabilities  as mentioned above  (Proposition \ref{Prop52} (one-sided) and Propositions \ref{Prop55} and  \ref{Prop58} (two-sided)). As a byproduct of these results  we deduce  under $m_+/m\to0$  
the asymptotic monotonicity of $a(x)$, $x>0$ by showing that
$$P[\sigma^0_{[R,\infty)} < \sigma^0_{\{0\}}] \sim 1/a(R)$$
(see Corollary \ref{Cor54}) as well as  the following result on the classical two-sided exit problem that has not been satisfactorily answered in case $\sigma^2=\infty$. Denote by $\sigma^x_B$ the first entrance time   into  a set $B$ of the r.w. starting at  $x$
and by  $V_{{\rm ds}}(x)$  the renewal function for the weakly  descending ladder height process of the r.w.
It turns out that if $m_+/m\to0$,  then the ratio $V_{\rm ds}(x)/a(x)$ is s.v. at infinity and that
uniformly   for  $1 \leq x\leq  R$,
\beqn\label{intr1}
P[\sigma^x_{[R,\infty)} < \sigma^x_{(-\infty,0]}] \sim V_{{\rm ds}}(x-1)/V_{{\rm ds}}(R) 
\eeqn
as $R\to\infty$  (Proposition \ref{Prop46}).

 %and for any $M>1$, uniformly for $x$ satisfying $a(-R)/a(x)>\ep$,  as $R\to \infty$  $$P[\sigma^x_{[R,\infty)} < \sigma^x_{(-\infty,0]}] \to 0  $$

  %Bertoin and Doney \cite[]{BD} observed that this equivalence holds for each $x$ fixed for all 
 
  For L\'evy processes having no upwards jumps  there is an identity for the corresponding probability  (cf. \cite[Section 9.4]{D_L}), and (\ref{intr1}) is  an exact analogue of it for the r.w. In \cite{Uexit}  the asymptotic equivalence (\ref{intr1}) is obtained under some auxiliary conditions other than  $m_+/m\to0$, and the related matters are addressed.
 
 Combined with \cite[Theorem 1.1]{Unote}  formula  (\ref{intr1}) will lead to the following result. %Let $\lfloor c\rfloor$ denote the integer part of a real number  $c$.

 %Thm8
\begin{theorem}\label{Thm8} Suppose $\sigma^2=\infty$ and $m_+/m\to 0$.  Then
\v2 
{\rm (i)} \;\; $\ell_+(x):= \int_0^x P[Z>t]dt$ is s.v. and $a(x) \sim V_{\rm ds}(x)/\ell_+(x)$\;\;($x\to\infty$).
\v2
{\rm (ii)} \;\, for a constant  $\alpha \geq 1$ 
%and a slowly varying function  $L$ at infinity 
the following are equivalent
\beqn\label{a-e}
\begin{array}{ll}
{\rm (a)} \;\; P[S_n>0] \to 1/\alpha;\\[1mm]
{\rm (b)} \;\; m_-(x) \; \mbox{ is regularly varying with index  $2-\alpha$;}\\[1mm] % \sim x^{2-\alpha}/ L(x)$,
{\rm (c)} \;\; P[\sigma^x_{[R,\infty)}< \sigma^x_{(-\infty,0]}] \to \lam^{\alpha -1} \;\; \mbox{as}\;\;x/R\to\lambda \quad \mbox{for each} \;\;0 < \lam < 1;\qquad\\[1mm]
{\rm (d)} \;\; V_{\rm ds}(x)\; \mbox{ is regularly varying with index  $\alpha-1$,}\\[1mm] % \sim Cx^{\alpha-1}/P[-\hat Z>x]$,
{\rm (e)} \;\; a(x)\; \mbox{is regularly varying with index  $\alpha-1$;} % \sim Cx^{\alpha-1}/P[-\hat Z>x]P[Z>x]$.
\end{array}
\eeqn
and each of   (a) to (e) implies that\,  
\beqn\label{a/C/m}
a(x) \sim C_\alpha x/m(x),  \,\quad \mbox{where}\;\;  C_\alpha=1/\Ga(3-\alpha)\Ga(\alpha).
\eeqn 
\end{theorem}

%Although   the assumption   entails $\sigma^2=\infty$, 
If $\sigma^2<\infty$ (entailing $\sup m(x)<\infty $), then   all the statements  (a) to (e) of (\ref{a-e})  are  valid with $\alpha=2$, and   (i) and (\ref{a/C/m}) are also valid but with   $a(x)$  replaced  by $2a(x)$.

It is not clear how the conditions (a) to (e) are related when the condition $m_+/m\to0$ fails. For $\alpha>1$, however, this latter condition seems to be  a reasonable restriction--- without assuming any additional one on the tails of $F$---for a description of relations of the  quantities of interest. The situation  is somewhat different for $\al=1$ where (a) and (c) are equivalent to each other  and entail (b)  and (d) for every r.w. with 
$\sigma^2=\infty$ (see Remark \ref{Rem11} below and/or  \cite{Uexit}).
 
%Cor9
\begin{corollary}\label{Cor9} \, 
{\rm (i)} If  $m_+/m\to 0$, then $F$  belongs to the domain of attraction of the normal law if and only if 
either one of  conditions (a) to (e) holds with $\alpha =2$. 

{\rm  (ii)} Suppose  $\mu_+/\mu \to0$ and $1<\alpha <2$.   Then $F$  belongs to the domain of attraction of a  stable law of exponent $\alpha$  if and only if either one of  (a) to (e) holds. 
 
{\rm [(ii)  follows immediately  from  Theorem \ref{Thm8} because of the well-known characterization theorem for the domain of attraction. As for  (i) see (2) in Appendix (A).]}
\end{corollary}

  The condition $\lim P[S_n>0]=\rho$\;---apparently stronger than the so-called Spitzer's condition 
 $n^{-1} \sum_{k=1}^n P[S_k>0] \to \rho$
 but equivalent to it (cf. \cite{D_L}, \cite{BD})---plays an  important role in the study of fluctuations of r.w.s. This condition holds if $F$ belongs to the domain of attraction of a stable law but not to its sub-class consisting of those distributions for which the exponent of the limit stable law equals one and the positive and negative tails are asymptotically equivalent. It is of interest to find a condition for the reverse implication to be true. Doney \cite{D} and Emery \cite{Em} observed that for $\alpha>1$, Spitzer's condition (a) implies the domain of attraction if one tail outweighs the other overwhelmingly---conditions much stronger than $\mu_+/ \mu \to 0$.    The part (ii) of Corollary \ref{Cor9} sharpens their results.  
 For $\alpha=1$ the assertion corresponding to (ii) fails since the slow variation of $\eta_-$ does not imply
 the regular variation of $\mu_-$. 
 
   %In any way  by  characterizations of de Haan class  \cite[Theorems 3.6.8 and 3.7.1]{BGT} it follows that  $\mu_-$ is regularly varying with index $-1$  if and only if \, $x^{-\lam}\int_0^x y^{\lam}\mu_-(y)dy$ is slowly varying as $x\to\infty$ for some (hence all) $\lam> 0$. 
   
   %Rem10  
   \begin{remark}\label{Rem10} \;   Of Theorem \ref{Thm8}  the assertion  that  (a) is  equivalent to (b) under $m_+/m\to 0$  is essentially  Theorem 1.1 of \cite{Unote}  (see Section 7.4 of the present paper), so that  Corollary \ref{Cor9}  restricted to  conditions (a) and (b)  (with \lq\lq each of 
   (a) to (e)''  replaced by \lq\lq (a)'' in the statements) 
 should be considered as its corollary though not stated  in \cite{Unote} as such. For $\al=1$ the equivalence of (a) and (c) is verified in \cite{KM} (see Remark \ref{Rem11} below).  The equivalence of (c) to (e) follows from  what is stated at  (\ref{intr1}) and the essential content supplemented   by  Theorem \ref{Thm8} is that  (a) and (b) are equivalent to the conditions (c) to  (e) in case $\alpha>1$. 
 \end{remark}   
  \v2
%Rem11
 \begin{remark}\label{Rem11}\,   Let $\alpha=1$ in (\ref{a-e}) (in particular (a) becomes $P[S_n>0]\to 1$).  Kesten and Maller  \cite{KM} obtain  a purely analytic  criterion for  (a) to hold  for   r.w.s on $\mathbb{R}$ which entails  the equivalence of (a) and (c) of Theorem \ref{Thm8} (for $\alpha=1$).      They do not assume the recurrence nor any moment condition and below we state a consequence of their results relevant to the case $\alpha=1$ of Theorem \ref{Thm8}. 
 Recall $A(x) = \int_0^x \big[ \mu_+(y)-\mu_-(y)\big]dy.$
  By Theorem 2.1 and Lemma 4.3 of \cite{KM} it follows that (a) is equivalent to $S_n \stackrel{P}\to \infty$ (the symbol \lq$\stackrel{P}\to$' means \lq convergence in probability'), and  whenever 
  $\sigma^2=\infty$,  (a) holds if and only if % \beqn\label{A/KM}
\v2
 (c$'$)  \quad $\exists x_0>0, \; A(x)>0$\; for \;$x\geq x_0$\; and \;  ${\displaystyle \frac{xF(-x)}{A(x)} \to 0,}$
 \v2\noindent
while it is noted in Remark (iii)  to Theorem 2.1 of \cite{KM} that  under $\sigma^2 =\infty$  a result of \cite{GM}  may be equivalently stated as (c) $\Leftrightarrow$ (c$'$).   It therefore follows that 
\beqn\label{A/KM2}
\begin{array}{ll}
\mbox{\it  if $\sigma^2=\infty$,     (a), (c)  (with $\alpha=1$)  and  (c$'$) are equivalent  to one another} \\
\mbox{\it and   necessary and sufficient in order that $S_n\to \infty$ in probability.}
\end{array}
\eeqn
 If $EX=0$,  $(c')$ is rephrased as $\lim \big\{\eta_-(x)-\eta_+(x)\big\}/x\mu_-(x) =\infty$, which  obviously  implies that  $\eta_-(x)/ x\mu_-(x) \to\infty$, hence  $\eta_-$ is s.v. at infinity. \,
\end{remark}  
   
 In addition to the signs $\sim$, $\asymp$ and $\lfloor \cdot \rfloor$ that have already been introduced  we shall use  $\wedge$ and $\vee$ to denote the minimum and  maximum of two terms on their two sides. By $C, C', C_1, etc.$ we denote the generic positive finite constants whose values may change 
 from line to line.
 
\vskip2mm
In the next section we derive some fundamental facts about $a(x)$ as well as  the functionals introduced above, which incidentally yield  (i) of Theorem \ref{th:1_2} (see Lemmas \ref{Lem14}, \ref{Lem17})
and the sufficiency for (H) of (\ref{H1}) mentioned above; also the lower bound of Theorem \ref{th:1} is verified under a certain side condition. The proof of  Theorem \ref{th:1} is more involved and given in Section 3, in which  we also prove  Theorems \ref{th:1_2} and \ref{th:1_3}.  Proposition \ref{Prop5} and Theorem \ref{Thm6}  are proved in Sections 4 and 5, respectively. Theorem \ref{thm7} 
is proved in Section 6.  Applications are discussed  in Section 7. In Section 8 we give two examples: for the first one the r.w. is supposed in the domain of attraction of a stable law with exponent $1\leq \alpha \leq 2$  and some precise asymptotic forms of
$a(x)$ as $|x|\to \infty$ are exhibited and demonstrated, while the second one reveals how $a(x)$ can  irregularly behave  for large values of $x$.  Section 9 consists of  Appendixes (A) and (B): in (A)   some elementary relations involving s.v. functions are shown that are used in verification of Corollary \ref{Cor9}, while in  (B)  we prove a  renewal theorem  for a r.s. distribution on $\{0, 1,2,\ldots\}$.

%%Section2
\section{Preliminaries}

In this section we first present some easy facts 
and then give several lemmas, in particular,   Lemmas \ref{Lem14} and   \ref{Lem18} which together assert that $\bar a(x)\asymp x/m(x)$ under the last inequality in (\ref{H1}) and  whose proofs    involve typical  arguments  that are implicitly  used in Sections 3 to 5. 
%We use the same notation as in \cite{Unote}. 
%Because of our moment condition, i.e.,   $E X=0$, $t/(1-\psi(t))$ is integrable about the origin and $a$ is certainly expressed as 

As in \cite{Unote} we bring in the following functionals of $F$  in addition to those introduced in Section 1:
\[\tilde c(x) =\frac1{x}\int_0^x y^2\mu(y)dy, \quad \tilde m(x)=\frac2{x}\int_0^x y\eta(y)dy,
\]
\[
h_\ep(x)=\int_0^{\ep x} y\big[\mu(y)-\mu(\pi x+y)\big]dy \quad (0<\ep \leq \pi/2);
\]
%$$\a(t) = \int_0^\infty \mu(x)\sin tx dx, \quad \beta(t)=\int_0^\infty  \mu(x)(1- \cos tx)dx$$
also    $\tilde c_\pm(x)$ and  $\tilde m_\pm(x)$ are  defined with $\mu_\pm$ in place of $\mu$, so that   
$\tilde c(x)=\tilde  c_-(x)+ \tilde c_+(x)$, etc.  
%Our problem  involves the difference
%\[\gamma(t) =\beta_+(t)-\beta_-(t).\]
By our basic hypothesis (\ref{X0}) $x\eta(x)$, $c(x)$ and $h_\ep(x)$ tend to infinity as $x\to \infty$; $\eta$, $c$ and  $h_\ep$ are monotone.
Note that $\Im\psi(t) =- t \gamma(t)$  as is easily checked by integrating by parts the integral $\int (\sin tx - tx) dF(x)$.
Here and throughout  the rest of this section as well as Sections 3 to 5  $x\geq 0$ and $t\geq 0$.  We shall be  concerned  with the behaviour of these functions  as 
$x\to\infty$ or $t\downarrow 0$ and  omit  \lq\lq$x\to\infty$'' or \lq\lq$t\downarrow 0$'' when it is obvious.

As noted previously the function $m$ admits the  decomposition
\[
m(x) = x\eta(x) +c(x).
\]
$m$ is a rather tractable function: increasing and concave, hence subadditive and 
\[
\mbox{for any}\; k>1,\quad  m(kx) \leq k m(x)\quad\mbox{and}\quad  m(x)/k\leq m(x/k) ,
\]
 while  $c$, though increasing, may vary quite differently. 
The ratio  $c(x)/m(x)$ may converges to 0 or to 1 as $x\to\infty$ depending on   $\mu$   and   possibly  oscillates asymptotically between $0$ and $1$;  and 
$$ c(kx) =  k^2\int_{0}^{x} \mu(ku)udu\leq  k^2 c(x) \quad   (k>1), $$
where the factor $k^2$ cannot be replaced  by $o(k^2)$ for  the upper bound to be valid (cf. Section 8.2).
 %Without the condition $m_+(x)/m(x)\to 0$ the following lemma, though easy to show, provide a sufficient condition for  $\bar a(x)$ to satisfy (i) of Theorem \ref{thm1}. 
We also have
\[
 \tilde m(x) = x\eta(x) +\tilde c(x)
 \]
and $\tilde m$ is increasing and concave. Clearly $\tilde c(x)< c(x)$, hence  $\tilde m(x)\leq m(x)$.

It  holds  that
\[%\beqn\label{*a}
\bar a(x)= \frac1{2\pi}\int_{-\pi}^\pi \Re\frac{1}{1-\psi(t)} (1-\cos xt)dt.
\]
(cf. \cite[Eq(28.2)]{S}).  Recalling
$1-\psi (t) = t \al(t) + i t\gamma(t)$ we have
\[
\frac1{1-\psi(t)}=\frac{\al(t) -i\gamma(t)}{\al^2(t)+ \gamma^2(t)}\cdot \frac{1}{t}.
\]
Hence
\beqn\label{**a}
\bar a(x) = \frac1{\pi }\int_0^\pi \frac{\al(t)}{[\al^2(t) +\gamma^2(t)] t}(1-\cos xt)dt. 
\eeqn
Note that   $\al_\pm(t)$ and $\beta_\pm(t)$ are all positive (for $t>0$); 
 %$\a_\pm$ are odd and $\b_\pm$ even; 
 by Fatou's
lemma $\liminf \al(t)/t =\liminf t^{-2}\int_0^\infty (1-\cos tx)d(-\mu(x))\geq \frac12 \sigma^2$, so that $\al(t)/t \to \infty$ under the present setting. % and similarly $\liminf \b(t)/t^2\geq \frac16 E[|X|^3]$.  %Since there arises the difference  $\b_+ - \b_-$ in the denominator on the right side above,

In order to find asymptotics of $\bar a$ we need to know asymptotics of $\al(t)$ and $\gamma(t)$ as $t\downarrow 0$  (which entail those of $\al_\pm$ and $\beta_\pm$ as functionals of $\mu_\pm$).
 Although  the arguments given for the first two lemmas are virtually the same as in \cite{Unote}, we give the  full proofs  since  some of constants  in \cite{Unote} are   wrong or inadequate for the present need and need to be corrected---the values of the constants involved  are  not significant  in \cite{Unote} but turn out to be of   crucial importance  in our proof of Theorem \ref{th:1}.  Let $0<t\leq \pi$ in the following lemmas.
%lem12
\begin{lemma}\label{Lem12} For $0<\ep\leq \pi/2$,
\beqn\label{h1} 
[\ep^{-1}\sin \ep]h_\ep(1/t) <   \al(t)/t < [ \pi^2 c(1/t)]\wedge m(1/t).
\eeqn
%where $s\wedge t = \min\{s,t\}$.
\end{lemma}
\begin{proof}   \,  By monotonicity of $\mu$ it follows that 
\[
\al(t) > \bigg( \int_0^{\ep/t} + \int_{\pi/t}^{(\pi+\ep)/t}\bigg) \mu(z)\sin tz \,dz = \int_0^{\ep/t} \big[\mu(z)-\mu(\pi/t+z) \big]\sin tz\,dz,
\]
which by $\sin tz \geq \ep^{-1}(\sin \ep)tz$ ($tz \leq\ep$) shows the first inequality of the lemma.
  Obviously we have $\al(t)<\int_0^{\pi/t} \mu(z)\sin tz \,dz \leq tc(\pi/t) \leq \pi^2 tc(1/t)$. On the other hand  splitting this same integral at $1/t$ we see $\al(t)< tc(1/t) +\eta(1/t) =tm(1/t)$. Thus   the second inequality
   follows. 
  \end{proof} 

\vskip2mm
%lem13
\begin{lemma}\label{Lem13}
\beqn\label{m1}
  \begin{array} {ll}  (a) \quad \frac{1}{2}  \tilde m(1/t)t \leq \beta(t) \leq 2\tilde m(1/t)t,\\[2mm]
 (b) \quad  \frac{1}{3}m(1/t)t \leq \al(t) + \beta(t)\leq 3 m(1/t)t. 
 \end{array}
\eeqn
\end{lemma}
\begin{proof}  
Integrating by parts and using the inequality $\sin u \geq (2/\pi)u$ ($u<\frac12 \pi$) in turn we see 
\[
\beta(t) =t\int_0^\infty \eta(x)\sin tx\,dx \geq \frac{2t^2}{\pi} \int_0^{\pi/2t} \eta(x)x\,dx+ t \int_{\pi/2t}^\infty \eta(x)\sin tx\,dx.
\]
Observing that the first term   of the last member  equals  $\frac12 t \tilde m(\pi/2t) \geq \frac12 t\tilde m(1/t)$ and the second one equals $-\int_{\pi/2t}^\infty  \mu(x)\cos txdx >0$ we obtain  the left-hand inequality of (\ref{m1}a). As for the right-hand one of (\ref{m1}a) we split   the defining integral of $\beta$ at $1/t$ to see  that  $\beta(t) \leq \frac12 t \tilde c(1/t) + 2\eta(1/t) \leq 2t\tilde m(1/t)$. The upper bound of (\ref{m1}b) is immediate from those in  (\ref{H1}) and (a) just proved since $\tilde m(x)\leq m(x)$. To verify the lower bound use 
 (\ref{h1}) and  the inequalities $h_1(x)\geq c(x)-\frac12 x^2\mu(x)$ and $\sin 1> 5/6$
 to obtain
 $$[\al(t)+\beta(t)]/t >  (5/6)\big[c(1/t) -\mu(1/t)/2t^2\big] +2^{-1} \big[\tilde c(1/t) +\eta(t)/t\big].$$
 By $x^2\mu(x) \leq [2c(x)]\wedge [3\tilde c(x)]$ it follows that $ \frac56\mu(1/t)/2t^2 \leq \frac12 c(1/t) + \frac12 \tilde c(1/t) $ and hence $\al(t)+\beta(t)  > \big[\frac26 c(1/t) + \frac12\eta(1/t)/t\big]t> \frac13 m(1/t)t $ as desired.
\end{proof} 
\vskip2mm
%where the constants involved in (\ref{m1}) do not depend on $F$  

For $t > 0$ define
\[
f(t) = \frac{1}{t^2 m^2(1/t)} \quad\mbox{and }\quad f^\circ(t) = \frac{1}{\al^2(t)+\gamma^2(t)}.
\]
Observe   
\beqn\label{*c}
\bigg(\frac{x}{m(x)}\bigg)' = \frac{c(x)}{m^2(x)},
\eeqn
hence  $x/m(x)$ is increasing and  $f(t)$ is decreasing. 
%Lem14
\begin{lemma} \label{Lem14}\, 
\beqn\label{eqL2.1}
 \int_0^\pi \frac{f(t) \al(t)}{t} (1-\cos xt) dt \leq C\frac{x}{m(x)},
\eeqn
 for some universal constant $C$. In particular if  $\liminf  f(t)[\al^2(t) +\gamma^2(t)] > \de$ for some  \, $\de>0$, then   $\bar a(x) < C\de^{-1} x/m(x)$ for all sufficiently large $x$. 
\end{lemma} 
\begin{proof}  \, We break the  integral on the RHS of   (\ref{eqL2.1})  into two parts 
\[
 J(x) =\int_0^{\pi/2x}\frac{f(t)\al(t)}{t}(1-\cos xt) dt   \quad \mbox{and} \quad      K(x) =\int_{\pi/2x}^\pi \frac{f(t)\al(t)}{t}(1-\cos xt)  dt.
\]
 Using  $\al(t)\leq \pi^2 c(1/t)t$ we observe
\beqn\label{K*}
0\leq \frac{K(x)}{\pi^2} \leq \int_{\pi/2x}^\pi \frac{2c(1/t)}{t^2m^2(1/t)}dt =\int_{1/\pi}^{2 x/\pi}\frac{2c(y)}{m^2(y)}dy  =  \bigg[\frac{2y}{m(y)}\bigg]_{y=1/\pi}^{2x/\pi}  < \frac{2x}{m(x)},
\eeqn
where we have applied  $m(2x/\pi) \geq m(x)2/\pi$ for the last inequality. 
Similarly
\[
\frac{J(x)}{\pi^2} \leq  \frac{x^2}2\int_0^{\pi/2x}f(t)c(1/t)t^2 dt =\frac{x^2}2 \int_{2x/\pi}^\infty \frac{c(y)}{y^2m^2(y)}dy
\]
 and, observing
\beqn\label{1/xm}
 \int_{x}^\infty \frac{c(y)}{y^2m^2(y)}dy\leq \int_x^\infty \frac{dy}{y^2m(y)} \leq 
  \frac1{x m(x)},
\eeqn
 we have $J(x) \leq \frac18\pi^4 x/m(x)$, finishing the proof. 
\end{proof} 
%$$x^2 \int_{2x/\pi}^\infty \frac{c(y)}{y^2m^2(y)}dy
%\leq \bigg[\frac1{ym(y)}\bigg]_{y=2x/\pi}^\infty + \int_{2x/\pi}^\infty \frac{2}{y^2m(y)}dy
%\leq \frac{(\pi/2) x}{m(2x/\pi)} \leq (\pi/2)^2\frac{x}{m(x)},$$
% \qed

\vskip2mm
 If there exists a  constant $B_0>0$ such that for  all $t$ small enough,
 \beqn\label{B0}
 \al (t)\geq B_0c(1/t)t,
 \eeqn
 then  the  estimation  of  $\bar a$ becomes much easier. Unfortunately  condition (\ref{B0}) may fail to hold in general: in fact  the ratio $\al(t)/[c(1/t)t]$ may oscillate between $1-\ep$ and $\ep$ infinitely many times for any $0<\ep <1$ (cf. Section 8.2). To cope with such situation the following lemma will be used in a crucial way. 

 %%%%LEM15
 \begin{lemma} \label{Lem15}~  For all $x>0$,
  \[
  h_\ep( x)\geq c(\ep x) -(2\pi)^{-1} \ep^2 x\big[\eta(\ep x) -\eta(\pi x +\ep x)\big].
  \]
 \end{lemma}
 \begin{proof}     On writing 
 $h_\ep(x) =c(\ep x) -\int_0^{\ep x}u\mu(\pi x+u)du$,  the integration by parts yields 
 \[
 h_\ep(x)- c(\ep x)   = -\int_0^{\ep x} \big[\eta(\pi  x+u)-\eta(\pi x+ \ep x) \big]du.
 \]
By monotonicity and convexity  of $\eta$ it follows that if $0<u\leq  \ep x$ (entailing $0\leq \ep x-u < \pi x$),
\[
0\leq \eta(\pi x+u)-\eta(\pi x+ \ep x) \leq \frac{\ep x-u}{\pi x}\big[\eta(\ep x) -\eta(\pi x+ \ep x)\big],
\]
and substitution   readily  leads to the inequality of the lemma.
\end{proof} 

\vskip2mm
 %Lem16
\begin{lemma} \label{Lem16}  For any\,  $0<\de\leq 1$, $0<t<\pi $,  if $c(1/t) \geq \de m(1/t)$ and
 $s= \big[1\wedge (\de\pi)\big]  t$, then $\al(s)/s> \lam (\pi^{-1}\wedge\de)^2m(1/s)$ with  $\lam : =\frac12 \pi \sin 1>1$.
In particular if  $\delta:=\liminf c(x)/m(x) >0$, then  $\al(t)/t >(\pi^{-1}\wedge \de)^2  m(1/t)$ for all sufficiently small  $t>0$---so that  (H) holds.
\end{lemma}
\vskip2mm
\begin{proof}   First suppose  $\de \pi \leq 1$. 
Take   $\ep= \de\pi$ in  Lemma \ref{Lem15}. Then  $\ep^{-1}\sin \ep \geq \lam 2/\pi$.  while  the premise of the first statement of the lemma  implies 
$$h_\ep(1/\ep t) > c(1/t) -(2\pi)^{-1}\ep m(1/t) \geq \frac12 \de  m(1/t)  \geq \frac12 \de \ep m(1/\ep t) = \frac12 \pi \de^2 m(1/\de\pi t),$$ 
whence by Lemma \ref{Lem12} $\al(s)/s> \lam\de^2m(1/s)$ for $s=\de\pi t$. Similarly, if  $\de \pi > 1$, then taking $\ep =1$ we  have $\al(t)/t > (\sin 1) (\de- (2\pi)^{-1}) m(1/t)> \lam \pi^{-2}m(1/t)$..
\end{proof}

\vskip2mm
 %Lem17
\begin{lemma} \label{Lem17} \, If  (H) holds, then   %$f(t)\asymp 1/\big[\al^2(t)+\ga^2(t)\big] $, or slightly strongly, 
$${\rm (H')} \quad \al(t)+|\ga(t)| >[\pi^{-2}\wedge {\textstyle \frac13} \de_H] tm(1/t) \qquad \mbox{for all sufficiently small }\;\; t>0,$$  
and 
\beqn\label{eqL2.6}
\int_0^1 \frac{\beta(t)}{\alpha^2(t) +\ga^2(t)} dt <\infty.
\eeqn
\end{lemma}

(H$'$) together with Lemma \ref{Lem13} ensures that $f^\circ$ is almost decreasing.
\v2
\pf   If $\frac12 m(1/t) \leq  c(1/t)$, then by  Lemma \ref{Lem16} $\al(t) >\frac14  tm(1/t)$ entailing ${\rm (H')}$  (for this $t$), while if  $\frac12 m(1/t) > c(1/t)$, then $\eta(1/t)/t > 2^{-1}m(1/t)$. Hence (H) implies ${\rm (H')}$.

By Lemma \ref{Lem13}  ${\rm (H')}$ implies that the integral in (\ref{eqL2.6}) is at most a constant multiple of
\[\int_0^1 \frac{\tilde m(1/t)}{tm^2(1/t)} dt = \int_1^\infty  \frac{\tilde m(x)}{xm^2(x)} dx = \int_1^\infty\frac{2\int_0^x y\eta(y)dy}{x^2m^2(x)}dx.$$
Interchanging the order of integration one deduces that the last member above equals 
 $$\int_1^\infty\frac{2\int_0^1 y\eta(y)dy}{x^2m^2(x)}dx+ \int_1^\infty\ 2y\eta(y) dy\int_y^\infty \frac{dx}{x^2m^2(x)} < C +\int_1^\infty \frac{2\eta(y)}{m^2(y)}dy <\infty,
 \]
 where  %$\int_x^\infty [y^2m^2(y)]^{-1}dy \leq %\int_x^\infty [- y^{-1}m^{-2}(y)]' dy [xm^2(x)]^{-1}$
 the monotonicity of $m$   is used for the first inequality.
Thus (\ref{eqL2.6}) is verified.  \qed

%Since $\int_{\e x}^x \ell(z)zdz = \e^{-2}\int_{\e^2 x}^{\e x}\ell(\e u) udu < \e^{-2} c(\e x)$, we have
%$$c(\e x) \geq [\e^2/(1+\e^2)] c(x),$$ 
%$$h_\e(x) \geq [(1+\e^2)^{-1}- (2\pi)^{-1}] \e^2( c(x) -$$

 %The sufficiency for (H) of (\ref{basic}) and (\ref{H1}) follows from (\ref{m2}) and Lemma \ref{lem2.4}(ii)). %and  that of (\ref{H1}) from  and the expressions$$\beta_\pm(t) = t\int_0^\infty \eta_\pm(x) \sin tx\, dx.$$

\vskip2mm
%Lem18
\begin{lemma}\label{Lem18}  \,  If $\delta:=\liminf c(x)/m(x) >0$, then   for some constant $C >0$  depending only on  $\delta$,
\[
C^{-1} x/m(x) \leq \bar a(x) \leq C x/m(x) \quad \mbox{for all  $x$  large enough}.
\]
\end{lemma}
 \vskip2mm
\begin{proof}  \, By Lemmas \ref{Lem16} and \ref{Lem17} condition   ${\rm (H')}$  is satisfied, provided $\delta>0$. Hence  the upper bound follows from  Lemma \ref{Lem14}.
  
  Although the lower bound is given by Theorem \ref{th:1} which will be shown independently of Lemma \ref{Lem18} in the next section,  we here provide a direct proof. 
 Let  $K(x)$ be as in the proof of Lemma \ref{Lem14}. By Lemma \ref{Lem16} we may suppose that $\al(t)/t \geq B_1c(1/t)$ with a constant $B_1>0$. 
  Since both $c(1/t)$ and $f(t)$ are decreasing and hence so is their product,  we see 
\[
\frac{K(x)}{B_1} \geq \int_{\pi/2x}^\pi  f(t)c(1/t) (1-\cos xt) dt \geq  \int_{\pi/2x}^{\pi}\frac{c(1/t)}{t^2m^2(1/t)} dt,
\]
from which we deduce, as in (\ref{K*}),    that
\beqn\label{I}
\frac{K(x)}{B_1} \geq   \bigg[\frac{y}{m(y)}\bigg]_{y=1/\pi}^{2x/\pi} \geq \frac2{\pi}\cdot\frac{x}{m(x)} -\frac{1/\pi}{m(1/\pi)}.
\eeqn
Thus the desired lower bound   follows. 
\end{proof} 

%For later use we note that it is deduced from  the condition $c(y)\geq \de m(y)$ for  $1/\tau \leq y\leq x$ that 
%$$\int_{\pi}$$

%Remark. Owing to (\ref{m1}b)  it is sufficient for (H) to hold that
%$$\liminf \frac{\tilde m_+(x)}{\tilde m_(x)} <24/11 \quad\mbox{and} \quad \liminf \frac{\tilde m_-(x)-m_+(x)}{\tilde m(x)} >0.$$

%Lem19
\begin{lemma} \label{Lem19}\, Suppose $0=\liminf c(x)/m(x) <\limsup c(x)/m(x)$. Then
for any $\ep>0$ small enough there exists an unbounded sequence $x_n >0$ such that 
$$c(x_n) =\ep m(x_n) \quad\mbox{and}\quad \alpha(t)/t \geq 2^{-1} \ep^2 m(x_n) \quad\mbox{for}\quad  0< t \leq 1/x_n.$$
\end{lemma}
\v2\n
\pf\,
Put $\lam(x)= c(x)/m(x)$ and $\de =\frac12 \limsup \lam(x)$. Let $0<\ep<\de^2$.  Then there exists two sequences $x_n$ and $x_n'$ such that  $x_n \to \infty$, $x_n'<x_n$, 
\beqn\label{lam/de} 
\lam(x_n)= \ep \leq \lam(x)   \;\;  \mbox{for}\;\;  x_n'<x<x_n \;\;\; \mbox{and}\;\; \; \de= \lam(x_n').
\eeqn
Observing   $x_n'/x_n \leq m(x_n')/m(x_n) <\lam(x_n)/\lam(x_n') = \ep/\de<\de$, we see   $1/x_n<\de/x_n'$
and then by using (\ref{lam/de})
$$\frac{\lam(x_n)}{x_n} <\frac1{x_n} <\frac{\lam(x_n')}{x_n'}.$$ 
Hence  the intermediate  value theorem ensures that      there exists a solution of the equation $\lam(x)= x/x_n$ in the interval $x'_n<x<x_n$.  Let $y_n$ be the largest solution   and  put $\ep_n= y_n/x_n$. Then  
$$\ep < \lam(y_n)= c(y_n)/m(y_n) =\ep_n <1 \quad\mbox{and}\quad  y_n =\ep_n x_n.$$ 
Hence by Lemma \ref{Lem15}
$$h_{\ep_n}(x_n) \geq \Big[c(y_n) -\frac{\ep_n}{2\pi}m(y_n)\Big] =  \frac{2\pi-1}{2\pi} \ep_nm(\ep_n x_n) \geq \frac56 \ep^2 m(x_n). $$
Since  $h_\ep$ is non-decreasing,  for $0< t \leq 1/x_n$,
\beqn\label{h/m}
\alpha(t)/t \geq (\sin 1) h_{\ep_n}(1/t)> \frac56 h_{\ep_n}(x_n) \geq 2^{-1} \ep^2 m(x_n).
\eeqn 
Thus the proof is finished. \qed

%Lem20
\begin{lemma}\label{Lem20}  For any  $0<\de <1/\pi$ and $0<t<\pi$,  if  $c_+(1/t)/ m_+(1/t) \geq \de$,  then 
\beqn\label{al+ga/be}\alpha_+(s)+|\ga(s)| \geq 3^{-1}\delta^2 \beta(s) \quad (s=\de \pi t).
\eeqn
In particular if \, $\liminf  c_+(x)/m_+(x)>0$, then  (H) holds.
\end{lemma}
\v2\n
\pf \,  For any positive numbers  $\beta_{\pm}$ and $\de <2$, we have
\beqn\label{gel_ineq}
\de \beta_+ +|\beta_+ -\beta_-| \geq \frac12 \de (\beta_+ +\beta_-).
\eeqn
Under the premise of the first statement of the lemma,  
%Lemma \ref{Lem15} applied with  $\ep=\pi \de <1$ together with Lemma \ref{Lem12} 
Lemma \ref{Lem16} applied with $\mu_-$ in place of $\mu$ shows that for $s=\de\pi t$,
  $\alpha_+(s)/s \geq  \frac12 (5/6)\pi\de^2 m_+(1/s)$.  This  combined with Lemma \ref{Lem13}(a)  entails  $\al_+(t) \geq \frac23 \de^2 \beta_+(t)$,  so that by (\ref{gel_ineq})   (\ref{al+ga/be}) follows.
  The second assertion is immediate from  (\ref{al+ga/be})   in view of Lemma \ref{Lem13}(b).  \qed

%By Lemma \ref{lem2.5}   the premise of the assertion implies that there exists a positive constant $\de$ such that for all sufficiently small $t$,  $\alpha_+(t)/t \geq \de  m_+(1/t)$  entailing  $2\alpha_+(t)\geq \de \beta_+(t)$ owing to Lemma \ref{lem2.2}(a).  Thus  $2\alpha_+(t) +|\ga(t)| \geq \frac12 \de\beta(t)$, which 

%Lem21
\begin{lemma}\label{Lem21}
\, For    (H) to hold, it is sufficient that 
\beqn\label{suff_H}
\limsup \frac{x[\eta_+(x)\wedge \eta_-(x)]}{m_+(x)\vee m_-(x)} <\frac14.  
\eeqn

[ $1/4$ can be replaced by a larger number $\leq 1/2$.]
\end{lemma}
\v2\n
\pf \, Fix a constant $0<\de<1/\pi$ arbitrarily and  link the variables $x$ and $s$ by $s=\de \pi/x$.
By (\ref{al+ga/be}) and   similar one for the case
 $c_-(x)\geq   \de m_-(x)$ it follows that  for each  $x$ 
\v2
$(*)$\; 
if either  $c_+(x)\geq \de m_+(x)$   or $c_-(x)\geq   \de m_-(x)$, 
then  $\alpha(s)+|\ga(s)| \geq 3^{-1} \de^2 \beta(s)$.
\v2
Put
\beqn\label{m+-}
\lam(x):=  \frac{\tilde m_+(x)\wedge \tilde m_-(x)}{\tilde m_+(x)\vee \tilde m_-(x)}, \quad \omega(t) 
= \frac{\beta_+(s) \wedge \beta_-(s)}{\beta_+(s) \vee \beta_-(s)}.
\eeqn
Then by Lemma  \ref{Lem13}  again  we have $\omega(s) \leq 4\lam(x)$ that entails
%$< \la
$$|\ga(s)| = \big[1- \omega(s)\big] \big(\beta_+(s)\vee\beta_-(s)\big) \geq  2^{-1} \big[1-4\lam(x)\big]\beta(s).$$ 
 %Hence for the proof of the lemma it suffices to show that  (\ref{suff_H}) implies  $\lam<1/4$.
If  $c_+(x) < \de m_+(x)$  and $c_-(x) <   \de m_-(x)$ with  $\de>0$ small enough, then in the ratio under the $\limsup$ on the LHS  of   (\ref{suff_H})  replacing both $x\eta_\pm(x)$ and  $m_\pm$  by  $\tilde m_\pm(x)$ 
% in the numerator and by  $\tilde m_\pm$ in the denominator   
 reduces  its value not  much so that   (\ref{suff_H}) implies $1-4\lam(x) >\ep$ for some $\ep>0$ for all sufficiently large $x$. This combined with  $(*)$ shows
(\ref{suff_H}). \qed
\v2

The following lemma is crucial (see also \ref{eqL3.2})) in order to handle  the oscillating part of  the integrals defining $\alpha_\pm(t)$ and/or $\beta_{\pm}(t)$.
%Lem22
\begin{lemma}\label{Lem22} \, Let $0<t<s  \leq\pi$. Then 
\[
|\al(t)-\al(s)| \vee |\beta(t)-\beta(s)|  \leq \left\{ \begin{array}{ll}
 |s-t|c(\pi/t) + \pi t c(1/t), \\[2mm]
  \big[(2\sqrt \pi)\sqrt{(s-t)/t} \, \big] tm(1/t).
  \end{array}\right.
\]
\end{lemma}
If  (H) holds, then by Lemma \ref{Lem17}  the second bound entails that for some constant $C$  depending only on $\de_H$,
 $$|f^\circ(t) - f^{\circ} (s)| \leq C\sqrt{(s-t)/t}\,  f^\circ (t).$$
\begin{proof}   
By  definition  $\beta(t)-\beta(s) =   \int_0^\infty \mu(z)(\cos sz - \cos  tz)  dz$. For any positive constant $r$,
$$ \bigg|\int_0^{r/t} \mu(z)\big(\cos sz - \cos  tz\big)  dz\bigg|\leq |s-t|\int_0^{r/t}z\mu(z)dz = (s-t)c(r/t), 
$$
and
$$ \bigg|\int_{r/t}^\infty \mu(z)\big(\cos sz - \cos  tz\big)  dz\bigg|\leq \bigg(\frac{\pi}{s}+\frac{\pi}{t}\bigg)\mu(r/t)\leq \frac{\pi}{r^2} tc(r/t).$$
Obviously we have the corresponding bound  for $\alpha(t) -\alpha(s)$.
Taking $r=1$ yields the first bound of the lemma. For $r>1$,  we have  $c(r/t) \leq m(r/t) \leq rm(1/t)$ so that
$$|\al(t)-\al(s)| \vee |\beta(t)-\beta(s)|  \leq \big[ r|s-t| + \pi r^{-1}t\big] m(1/t).$$
Thus taking $r=  \sqrt{\pi t/ |s-t|}$ we obtain the second inequality. 
\end{proof}

%%Section3
\section{Proofs of  Theorems \ref{th:1} to \ref{th:1_3}.}
 %By (\ref{m1})  condition (H) entails that $\al(t) +|\ga(t)| \asymp \al(t)+\beta(t) \asymp tm(1/t)$ 
%and therefore  $f^\circ(t) \asymp f(t)$. Thus we obtain 
 The main content of this section consists of the proof of Theorem \ref{th:1}.  Theorem \ref{th:1_2} is verified after it: the part (i) of  Theorem \ref{th:1_2} is virtually proved in the preceding section, whereas   the part (ii) is essentially the corollary of the proof of Theorem \ref{th:1}. The proof of Theorem \ref{th:1_3}, which partly uses Theorem  \ref{th:1_2}(ii),  is given at the end of the section.  
 %Proposition 1.1 is proved in the last subsection.

\v2
{\bf 3.1.} {\sc Proof of Theorem \ref{th:1}.} 
\v2
  By virtue of the right-hand inequality of (\ref{m1}b) $f^\circ(t) \geq \frac{1}{9}f(t)$ and for the present purpose it suffices to bound the integral in (\ref{eqL2.1}) from below by a positive multiple of $x/m(x)$. 
   For a lower bound  of it we take  the contribution  from the interval $\pi/2x< t<1$.   We also employ the lower bound 
   $\al(t) \geq \int_0^{2\pi/t} \mu(z) \sin tz\, dz$ and write down the resulting inequality as follows: with  the constant $B= (9\pi)^{-1}$
      \beq
 \frac{\bar a(x)}{B} &\geq&  \int_{\pi/2x}^1 \frac{f(t) \al(t)}{t} (1-\cos xt) dt \\
 &\geq&  \int_{\pi/2x}^1 \frac{f(t)}{t} (1-\cos xt) dt\int_0^{2\pi/t} \mu(z) \sin tz\, dz\\
&=& K_{ I}(x)+K_{I\!I}(x) + K_{I\!I\!I}(x),
 \eeq
 where 
 \[
 K_I(x)= \int_{\pi/2x}^1 f(t)\frac{dt}{t}\int_0^{\pi/2t} \mu(z) \sin tz \, dz,
 \]
 \[
  K_{I\!I}(x) = \int_{\pi/2x}^1 f(t)\frac{dt}{t}\int_{\pi/2t}^{2\pi/t} \mu(z) \sin tz \, dz
  \]
 and
\[
K_{I\!I\!I}(x) = \int_{\pi/2x}^1 f(t)(-\cos xt)\frac{dt}{t}\int_0^{2\pi/t} \mu(z) \sin tz \, dz.
\]

%Lem23
\begin{lemma}\label{Lem23}
\[
K_I(x) \geq \frac{5}{3\pi} \cdot \frac{x}{m(x)} - \frac{1}{m(1)}.
\]
\end{lemma}
\begin{proof}  \, By $\sin 1 \geq  5/6$ it follows that  $\int_0^{\pi/2t} \mu(z) \sin tz \, dz > \int_0^{1/t} \mu(z) \sin tz \, dz \geq \frac56 tc(1/t)$, and hence
\[
K_I(x)> \frac56 \int_{\pi/2x}^1 \frac{c(1/t)}{t^2 m^2(1/t)}dt \geq \frac56\int_{1}^{2x/\pi}\frac{c(z)}{m^2(z)}dz \geq\frac{5}{3\pi} \cdot \frac{x}{m(2x/\pi)} - \frac{5}{6m(1)},
\]
implying the inequality of the lemma because of the monotonicity of $m$.
\end{proof}

%lem24
\begin{lemma}\label{Lem24}
$K_{I\!I\!I}(x) \geq -2\pi f(1/2)/x$ for all sufficiently large  $x$.
\end{lemma}
\begin{proof}  \,   Put $g(t)= t^{-1}\int_0^{2\pi/t} \mu(z) \sin tz \, dz$. One sees that  $0\leq g(1)<1$.  We claim  that  $g$ is  decreasing. Observe that 
$$\frac{d}{dt}t^{-1}\int_0^{2\pi/t} \sin tz \, dz = t^{-2}\int_0^{2\pi/t} (tz\cos tz -\sin tz )dz=0$$
 and 
\[
g'(t) =  \frac1{t^2} \int_0^{2\pi/t}\mu(z)(tz\cos tz  -\sin tz)dz,
\]
and that $u\cos u  - \sin u  <0$ for $0<u<\pi$ and the integrand  of the last integral  has unique zero  in the open interval $(0,2\pi/t)$. Then the monotonicity of $\mu$ leads to 
$
g'(t) <   0  %\mu(z_0) \frac1{t^2} \int_0^{2\pi/t}(tz\cos tz -\sin tz )dz =0,
$
as claimed. Now $f$ being decreasing, it follows that  $K_{I\!I\!I}(x) \geq -\int_{(2n+\frac12)\pi/x}^1f(t)g(t)dt$ for any integer $n$ such that $(2n+\frac12)\pi/x \leq 1$.
Since one can  choose $n$ so that $0\leq 1-(2n+\frac12) \pi/x \leq 2\pi/x$,   the inequality of the lemma
follows. \end{proof} 

%%lem25
\begin{lemma}\label{Lem25}
\[
K_{I\!I }(x) \geq -\,\frac{4}{3\pi} \cdot \frac{x}{m(x)}.
\]
\end{lemma}
\begin{proof}  \,  
Since $\mu$ is non-increasing, we have  $ \int_{\pi/2t}^{2\pi/t} \mu(z) \sin tz \, dz \geq \int_{3\pi/2t}^{2\pi/t} \mu(z) \sin tz \, dz$, so that
\[
K_{I\!I}(x) \geq  \int_{\pi/2x}^1 f(t)\frac{dt}{t} \int_{3\pi/2t}^{2\pi/t} \mu(z) \sin tz \, dz.
\]
We wish to make integration by $t$ first. Observe that the region of the double integral is %the union of the following three region 
%$$\{3/2<z<2; 3\pi/2z<t< \pi \},\quad \{2\leq z\leq 3x; 3\pi/2z < t<2\pi/z\}$$
%$$\mbox{and}\quad  \{3x<z<4x; \pi/2x<t<2\pi/z\}$$
 included in 
\[
\{3\pi/2 \leq z\leq 4x; 3\pi/2z < t<2\pi/z\},
\]
where the integrand of the inner integral is negative.  Hence
\[
K_{I\!I}(x) \geq \int_{3\pi/2}^{4x}\mu(z)   dz \int_{3\pi/2z}^{2\pi/z}  f(t)\frac{ \sin tz}{t}dt.
\]
Put
 \[
 \lambda= 3\pi/2.
 \]
 Then, since $f$ is decreasing, the RHS is further bounded below by 
\[
\int_{\lam}^{4x}\mu(z)  f(\lambda/z) dz \int_{3\pi/2z}^{2\pi/z}  \frac{ \sin tz}{t}dt.
\]
The inner integral being equal to $\int_{3\pi/2}^{2\pi} \sin u du/u $ which is larger than $- 2/3\pi=-1/\lambda$,  after  a change of variable we obtain
\[
K_{I\!I}(x) \geq -  \int_{1}^{4x/\lambda}\mu(\lambda z)  f(1/z) dz\geq -  \int_{1}^{x}\mu(\lambda z)  f(1/z) dz.
\]
Recall $ f(1/z)  =z^2/m^2(z)$.
Since $\int_x^\infty \mu(\lambda z)dz =\lambda^{-1} \eta(\lambda x)$ and,
 by integration by parts,
\beq
 - \int_{1}^{x}\mu(\lambda z)  f(1/z) dz  &=&  \frac1{\lambda}\bigg[ \frac{\eta(\lambda z)z^2}{m^2(z)} \bigg]_{z=1}^x - \frac2{\lambda} \int_{1}^x \frac{z \eta(\lambda z) c(z)}{m^3(z)}dz \\
 &\geq&  -\,\frac2{\lambda}   \int_{1}^x \frac{z \eta( z) c(z)}{m^3(z)}dz 
 - \frac{\eta(1)}{\lambda m^2(1)}.
 \eeq
 %We compute the indefinite integral $ \int [z \eta( x) c(x)/m^3(x)]dx$.
Noting $z\eta(z) = m(z)- c(z)$ we have
\[
\frac{z \eta(z) c(z)}{m^3(z)} = \frac{c(z)}{m^2(z)}- 
\frac{c^2(z)}{m^3(z)}.
\]
Since $m(1) > \eta(1)$, we conclude
\[
K_{I\!I}(x) \geq  -\frac2{\lambda}\int_{1}^x \frac{c(z)}{m^2(z)}dz - \frac{1}{\lambda m(1)} = -\frac{4}{3\pi}  \frac{x}{m(x)} +\frac{1}{\lam m(1)},
\]
hence the inequality of the lemma.  
\end{proof} 

\textit{ Proof of Theorem \ref{th:1}. }   Combining Lemmas \ref{Lem23} to \ref{Lem25}   we obtain
\beqn\label{prT1}
\frac{\bar a(x)}B\geq  \int_{\pi/2 x}^{1} f(t)\al(t)\frac{1-\cos xt}{t}dt \geq \frac1{3\pi} \cdot \frac{x}{m(x)} -   \frac1{m(1)} +O(1/x),
\eeqn
 showing Theorem \ref{th:1}.

%$$\int_{\pi/2x}^1 \frac{f(t)}{t} (1-\cos xt) dt\int_0^{2\pi/t} \ell(z) \sin tz\, dz$$
% in view of Lemma \ref{lem2.1}.

%Let  $h(x) = \int_0^x y[\ell(y)-\ell(\pi x+y)]dy$.  It is easy to see \cite[(2.12)]{Unote} that
% $$ \a(t)\geq (5/6) h(1/t)t.$$

%\vskip2mm
%Owing to Lemma \ref{lem2.2} we havel only to show the lower bound. Pick a positive integer $x$  arbitrarily. 
%The asserted lower bound follows from Corollary \ref{cor1} or Lemma \ref{lem3.4} according as $c(x)/m(x) \leq \frac16$ or  $> \frac16$. \qed

% Lemma \ref{lem0} (see the  remark given at the end of Section 2). Suppose  
%$\inf_{y<x} c(y)/m(y) < 1/4$.    If  $c(x) \leq \frac12  m(x)$, then 
%by $\bar a(x) \geq C_1 x/m(x)$ for some  $C_1>0$, as  asserted in Theorem \ref{thm1}(i). In %case    $c(x) > \frac12  m(x)$, let  $x_*$ be an integer such that  $x_* <x$, $\cos x$  $ c(y)\geq \frac14 m(y)\}$ for. $x_*\leq y <x$  Then 
%$$\bar a(x) \leq C_1 \bigg(\frac{x}{m(x)} -\frac{x_*}{m(x_*)}\bigg) +   C'\frac{x_*}{m(x_*)}$$
%with  a $C'>0$, Since $x/m(x)$ is increasing this entails $\bar a(x) \geq C''x/m(x)$ as desired. \qed  
\v2
{\bf 3.2.} {\sc Proof of Theorem \ref{th:1_2}.} 
\v2
 The first part (i) of Theorem \ref{th:1_2} is obtained by combining Lemmas \ref{Lem14} and \ref{Lem17}. As for (ii) 
 its premise implies  that for any $\ep>0$ there exists  $\de>0$ such that $f^\circ(t) >\ep^{-1}f(t)$ for $0<t<\de$, which concludes the result in view of the second inequality of (\ref{prT1}), for   $\int_{\de}^1 f(t)\al(t)(1-\cos xt) dt/t$ is bounded for each $\de>0$.
\v2

{\bf 3.3.} {\sc Proof of Theorem \ref{th:1_3}.}

%Lem26
\begin{lemma}\label{Lem26} \, Suppose that there exists $p:=\lim m_+(x)/m(x)$. Then  as $t\downarrow 0$
$$ \beta_+(t) =  p  \beta(t) + o\big(t m(1/t)\big)  \quad \mbox{and} \quad \alpha_+(t) =  p \, \al(t) + o\big(t m(1/t)\big),$$

  %where the upper sign applies if $\kappa\geq 1$ and  the lower otherwise.    
\end{lemma}
\begin{proof}  \, Take $\tilde\mu(x)\geq 0$ a non-decreasing summable function $\tilde\mu(x)$ $x\geq 0$,   let $\tilde m$, $\tilde \eta$, $\tilde\alpha$ and $\tilde\beta$ be the corresponding functions and $\tilde m/m \to1$.  It suffices to show that as $t\downarrow 0$
\beqn\label{be-be}
\beta(t)-\tilde\beta(t) =o(tm(1/t)) \quad \mbox{and} \quad  \al(t)-\tilde \al(t) =o(tm(1/t)). 
\eeqn
 Pick a positive number $M$ such that   $\sin M =1$. 
% and put $J_\pm(t) = \int_{M/t}^\infty \mu_\pm (y)(1-\cos ty) dy$.   Then, noting that
Then
$$\int_0^{M/t} (\mu- \tilde \mu) (y)(1-\cos ty) dy = -(\eta -\tilde \eta)(M/t)  +t\int_0^{M/t} (\eta - \tilde\eta)(y)\sin ty\, dy, $$
while, % $\int_{M/t}^\infty  (\mu_+-\mu_-) (y)(1-\cos ty) dy =  (\eta_+ -\eta_-)(M/t) +  tm(1/t)\times O(1/M)$,  
on writing  $x=1/t$,
$$\int_{M/t}^\infty \mu (y) \cos ty\, dy\leq \pi x\mu(Mx)\leq \frac{2 \pi}{ M^2x}c(Mx)\leq \frac{2 \pi}{ M x}m(x) = \frac{2\pi}{M}tm(1/t).$$
Since $\int_{M/t}^\infty (\mu -\tilde\mu) (y) dy = (\eta -\tilde\eta_-)(M/t)$, these together yield   
$$\beta(t)-\tilde\beta(t)= t\int_0^{M/t} (\eta -\tilde\eta)(y)\sin ty\, dy + tm(1/t)\times O(1/M)$$
 and 
%one obtains $J_+ -J_- = (\eta_+ -\eta_-)(M/t) + tm(1/t)\times O(1/M).$  
integrating by parts the last integral %$\int_0^{M/t} (\eta_+ -\eta_-)(y)\sin ty\, dy$   these two relations 
leads to
$$\frac{\beta(t)-\tilde\beta(t)}{t} = (m- \tilde m)(M/t) - t\int_0^{M/t}(m-\tilde m)(y)\cos ty\,dy + m(1/t)\times O(1/M).$$
 If $\tilde m/m \to1$,  the first two terms on the right side  is  $o(m(1/t))$  for each $M$ fixed and  we conclude the first relation of (\ref{be-be}) since $M$ can be made arbitrarily  large.  
 
 The second one is verified in the same way but taking $M$ large with  $\sin M=0$.
  \end{proof}
% When  $\kappa<1$, put   $\tilde\mu_- = \kappa \mu_-$ and  $\tilde m_- =\kappa m_-$   so that  $\mu_- =(1 -\kappa)\mu_- + \tilde\mu_-$ and $m_+(x) - \tilde m_-(x) = o(m(x))$.  In the proof given above for the case $\kappa=1$,  if 
%  $\mu_-$ is replaced by $\tilde \mu_-$, then all the formulae remain to be true but with $\eta_-$, $m_-$ and $\ga$ replaced  by $\tilde \eta_-$, $\tilde m_-$ and $\tilde \ga := \beta_+ - \kappa \beta_-$, respectively, so that  $\tilde \ga(t)/t= o(m(1/t))$.  
% Since $\ga = \tilde \ga - (1-\kappa)\beta_-(t)$, we have the formula of the lemma. 
%For obvious reason this shows that  if $m_+/m$ converges to a number  $\neq 1/2$, then (H) holds.

%Lem27
\begin{lemma}\label{Lem27}
  Suppose that  $\lim |\ga(t)|/tm(1/t) =0$. If $c/m\to 0$,  then $\bar a(x) m(x)/x$ diverges to infinity,  and if $\liminf c(x)/m(x) =0$, then $\limsup \bar a(x) m(x)/x =\infty$.
    \end{lemma}
\begin{proof}  \,  The first half follows from Theorem  \ref{th:1_2}(ii) and Lemma \ref{Lem12}. For the proof of the second one 
 we apply the trivial lower bound
\beqn\label{LB_a}\pi\bar a(\pi x_n) =\int_0^\pi \frac{\alpha(t)(1-\cos \pi x_nt)}{[\alpha^2(t)+\ga^2(t)]t}dt 
\geq \int_{1/2x_n}^{1/x_n}\frac{\alpha(t) dt}{[\alpha^2(t)+\ga^2(t)]t}.
\eeqn
valid for any sequence $x_n\in \mathbb{Z}/\pi$.
Suppose that   $\limsup c(x)/m(x) >0$ in addition to $\liminf c(x)/m(x) =0$, so that we can take a constant $\ep>0$ and an unbounded sequence  $x_n$ as in  Lemma \ref{Lem19}, according to which 
 $$c(x_n)= \ep m(x_n) \;\; \mbox{and}\;\; \alpha(t)/t \geq 4^{-1} \ep^2 m(1/t) \quad  \mbox{if} \;\; 1/2x_n <t\leq 1/x_n.$$ 
Because of  the second assumption of the  lemma the second relation above leads to % in the same range of   $t$ as above
\beqn\label{ga/al}
|\ga(t)|=o(m(1/t)t) = o(\alpha(t)) \qquad (n\to\infty),
\eeqn
while, combined with (\ref{lam/de}) and Lemma \ref{Lem12},  the first one  shows that for $1/{2x_n}\leq t \leq  1/{x_n}$,
$$\alpha(t)/t \leq \pi^2c(1/t) \leq \pi^2 c(2x_n) \leq 4 \pi^2  c(x_n) =4 \pi^2 \ep m(x_n).$$  
These together yields
$$\frac{\alpha(t) }{\alpha^2(t)+\ga^2(t)} \sim \frac1{\alpha(t)}\geq \frac1{t}\cdot\frac{1}{4\pi^2\ep m(x_n)},$$
and substitution into   (\ref{LB_a})  shows that for all $n$ large enough
 $$\pi\bar a(\pi x_n) \geq \frac{1}{5\pi^2\ep m(x_n)}\int_{1/2x_n}^{1/x_n}\frac{dt}{t^2} =  \frac{x_n}{5\pi^2\ep m(x_n)}.$$ 
Thus $\bar a(x) x/m(x)$ is unbounded, $\ep$ being made arbitrarily small.  
\end{proof}

%Rem28
\begin{remark}\label{Rem28}  Suppose $m_+/m\to 1/2$. Then for the second assertion of Lemma \ref{Lem27} the condition $\liminf c/m = 0$  can be replaced by 
 \beqn\label{m+/c+}
 \liminf {c_+(x)}/{m_+(x)} =0.
 \eeqn
Indeed, we have  $\alpha_+(t)\sim \alpha_-(t)$ owing to Lemma \ref{Lem26}, and  by the first half of Lemma \ref{Lem27} we have only to consider the case
$ \limsup {c_+(x)}/{m_+(x)}>0$.  Then, for $1/2x_n\leq t\leq1/x_n$, the same arguments  verify $m_+(1/t)t= O(\alpha_+(t))$  so that we have (\ref{ga/al}), hence  $\alpha/[\alpha^2+\ga^2]\sim 1/2\alpha_+$. For the rest we can follow the proof above.
\end{remark}
\v2

\v2
%Rem29
\begin{remark} \label{Rem29}  Let  $p\neq 1/2$ in Lemma \ref{Lem26}.  Then 
\v2
(i)\quad $|\ga(t)| \asymp \beta(t)$  so that (H) holds  owing to Lemma \ref{Lem13}(ii);
\v2
(ii)\quad if  $c/m\to0$, or equivalently $\eta$ is s.v., then Theorem \ref{thm7} is applicable: indeed, the slow variation of $\eta$  entails $m(x)\sim x\eta(x)$ and $m_+(x) \sim p x\eta(x)$, hence  $\eta_+(x)\sim p\eta(x)$ by the monotone density  theorem and $A(x)/x\mu(x) \sim (1-2p)\eta(x)/x\mu(x) \to \infty$ or $\to-\infty$ according as  $\frac12 -p$ is positive or negative, so that $F$ is r.s.  
\end{remark}

{\it Proof of Theorem \ref{th:1_3}.} The assertion (i)  is the same as Lemma \ref{Lem26}; the necessity  of the condition  $\limsup x\eta(x)/m(x)<1$  asserted in (ii) is immediate from Lemma \ref{Lem27}. 
The necessity of the conditions  $\limsup x\eta_\pm(x)/m(x)<1$ are verified  in Remark \ref{Rem28}. \qed

% If $c/m \to 0$, or equivalently, $x\eta(x)/m(x) \to 1$, then $\eta$ is slowly varying,  so that  $m$ varies regularly with index one.  By monotone density theorem converse holds, i.e.,  if $m$ varies regularly with index one, then $x\eta(x)/m(x) \to 1$.  Thus if $m_-/m_+\to1$, the conditions   $c_-/m_-\to 0$ and   $c_+/m_+\to0$ are equivalent to each other so that  each of them implies  
 
 %Hence   if  $m_-/m_+\to1$, then by (i) $\lim |\ga(t)|/tm(1/t) =0$ and  by Lemma \ref{Lem28}each of  $\liminf c_+/m_+=0$ and  $\liminf c_-/m_-=0$ entails  $c/m \to 0$, reducing to the case $x\eta(x)/m(x)\to1$.  

 \v2
%Section 4{Proof of Proposition 1.1}
\section{Proof of Proposition \ref{Prop5}} 
  %Put  $g(t) = f^\circ(t)/t=[(\alpha^2(t) + \ga^2(t))t]^{-1}$, so that     $\bar a(x)=\pi^{-1}\int_0^\pi \alpha(t) g(t)(1-\cos xt)dt$.   
  Suppose (H) to hold.  Since then $\bar a(x)\asymp x/m(x)$ by Theorems \ref{th:1} and \ref{th:1_2},   we may suppose  $R/2<x<R$ and 
 on writing  $\de = (R-x)/x$ the assertion to be shown can be  rephrased as
  \beqn\label{x/R}
 | \bar a(R)-\bar a(x)| \leq C \de^{1/4}[x/m(x)] \qquad (R/2<x< R).
  \eeqn
   Letting for  $M>1$ and $R/2 <x \leq R$ we  make the decomposition $\bar a(x)= u^M(x)+v_M(x)$ where
  $$u^M(x)= \int_0^{M/x} \frac{\alpha(t)f^\circ(t)}{t}(1- \cos xt) dt
  \quad\mbox{and}\quad v_M(x) = \int_{M/x}^\pi \frac{\alpha(t)f^\circ(t)}{t}(1 - \cos xt) dt.$$
By the inequality  $|\cos xt -\cos Rt| \leq |Rt-xt|$  it is then easy to show 
   \beqn\label{u/u}
|u^M(x)- u^M(R)|  
< C M\de [x/m(x)]
\eeqn
for a (universal) constant $C$  (as we shall see shortly), whereas to obtain a similar estimate for $v_M$ we cannot help exploit  the oscillation of $\cos xt$.      
 % $v_M(x)/[x/m(x)]\to 0$ as $x/R\to 1$ and $M\to\infty$ in this order  
%which is plausible because of a large number of oscillations   
To  the latter purpose   one may  seek  some appropriate smoothness of $\alpha(t)g(t)$,  which however is a property difficult to verify   because of   the intractable part  $\int_{1/t}^\infty \mu(y)\sin ty \,dy$ involved in the integral defining $\alpha(t)$.   In order to circumvent it,
 for each positive integer $n$ we bring in  the function
$$\alpha_{n}(t) := \int_{0}^{n\pi/t} \mu(y)\sin ty\, dy$$
and make use of  the inequalities $\alpha_{2n}(t)<\alpha(t) <\alpha_{2n+1}(t)$. 
%$$\int_0^\pi \alpha_{2n}(t)g(t)(1-\cos xt)dt \leq \pi\bar a(x)\leq \int_0^\pi \alpha_{2n+1}(t)g(t)(1-\cos xt)dt.$$
If $v_M(x) \leq  v_M(R)$, then
\begin{eqnarray}
\label{1_prP1}
0\leq v_M(R) - v_M(x) &\leq&   
\int_{M/x}^\pi \big[\alpha_{2n+1}(t)-\alpha_{2n}(t)\big]\frac{f^\circ(t)(1-\cos Rt)}{t} dt \nonumber \\
&&+
\int_{M/x}^\pi \frac{\alpha_{2n}(t)f^\circ(t)}{t}(\cos Rt-\cos xt)dt \nonumber \\
&&+ \int_{M/R}^{M/x} \frac{\alpha_{2n+1}(t)f^\circ(t)}{t}(1-\cos Rt)dt  \nonumber \\
&=& I_n + I\!I_n + I\!I\!I_n \quad \mbox{(say)};
\end{eqnarray}
 and if $v_M(x)  >  v_M(R)$,  we have an analogous inequality. We consider  the first case only, the other one being similar.  By    $0\leq \alpha_n(t)\leq \alpha(t)\leq  \pi^2tc(1/t)$  and by Lemma \ref{Lem17} that may read $f^\circ(t) \leq C_1/[m^2(1/t)t^2]$  under  (H) we
 see that for any $n$ 
 \beqn\label{2_prP1}
 \int_{M/R}^{M/x} \frac{\alpha_{n}(t)f^\circ(t)}{t}dt \leq C \int_{x/M}^{R/M}\frac{c(y)dy}{m^2(y)} \leq C\frac{(R-x)/M}{m(x/M)}\leq  C\frac{\de x}{m(x)}, 
 \eeqn
 so that  $I\!I\!I_n $ admits a bound small enough for the present purpose. 
 
  Now (\ref{u/u}) becomes ready to be verified.  By $|\cos xt -\cos Rt| \leq (R-x) t$ it follows  that    
  \[\bigg|\int_0^{1/x} \frac{\alpha(t)f^\circ(t)}{t}(\cos xt -\cos Rt)dt\bigg| \leq C(R-x)\int_{x}^\infty \frac{c(y)dy}{m^2(y)y}
 \leq \frac{C\de x}{m(x)}, 
 \]
 where $\int_x^\infty c(y)dy/[m^2(y)y] \leq \int_x^\infty dy/[y^2m(y)]$ is used for the last inequality; in a similar way one deduces that
  the integral over $[1/x,M/x]$ is  dominated in absolute value by
\[
  C(R-x)\int_{x/M}^x \frac{c(y)dy}{m^2(y)y}\leq CM\de\int_{x/M}^x \frac{c(y)dy}{m^2(y)} \leq \frac{CM\de x}{m(x)}, 
\]
%and   using $\int_x^\infty \leq 1/[xm(x)]$ one deduces that
 % the integral over $[0,1/x]$ is  dominated in absolute value by   $CR(R-x)/[xm(x)]$, 
  so that 
\[%\beqn\label{1/4}
\bigg|\int_0^{M/x} \frac{\alpha(t)f^\circ(t)}{t}(\cos Rt- \cos xt) dt\bigg| \leq \frac{C'M\de x}{m(x)},
\]%\eeqn
which  together with  (\ref{2_prP1})  concludes  (\ref{u/u}).

%Lem30
 \begin{lemma}\label{Lem30}\,  Under (H),  $|I_n| \leq C[x/m(x)]/n$  whenever  $M>2n\pi$. 
 \end{lemma}
\begin{proof}   The integrand of the integral defining $I_n$ is less than 
$2\big[\alpha_{2n+1}(t)-\alpha_{2n}(t)\big]f^\circ(t)/t $ ($\geq 0$). Noting $\alpha_{2n+1}(t)-\alpha_{2n}(t)= \int_{2n}^{(2n+1)\pi x/M} \mu(y)\sin yt\, dy $ 
  and interchanging   the order of integrations we infer that
 $$I_n \leq  2\int_{2n}^{(2n+1)\pi x/M} \mu(y)dy\int_{2n\pi /y}^{[(2n+1)\pi/y]\wedge \pi} \frac{f^\circ(t)}{t} \sin yt \, dt.$$
Let (H) be satisfied. Then  by Lemma \ref{Lem17} again  
   in the range of the inner integral where $y/2n\pi \geq 1/t \geq y/(2n+1)\pi$ we have
  $$f^\circ(t)/t \leq C_2 (y/n)^3/m^2(y/n) \leq C_2 n^{-1} y^3/m^2(y),$$
whereas the integral of  $\sin yt$ over $0<t<\pi/y$ equals $2/y$.  Thus  
$$I_n \leq \frac{4C_2}{n} \int_{2n}^{(2n+1)\pi x/M} \mu(y)\frac{y^2}{m^2(y)} dy.$$
By $\mu(y)y^2\leq 2c(y)$ 
$$\int_0^z \frac{\mu(y)y^2}{m^2(y)} dy \leq 2 \int_0^z \frac{c(y)}{m^2(y)}dy = \frac{2z}{m(z)}.$$
Hence we obtain  the bound of the lemma in view of  the monotonicity of $x/m(x)$.  
\end{proof}
 
 %Lem31
 \begin{lemma}\label{Lem31}\, Under (H) it holds that whenever  $\frac12 R<x \leq R$ and $1<M<x$,
 $$\bigg|\int_{M/x}^\pi \frac{\alpha_{2n}(t) f^\circ(t)}{t}\cos xt \,dt\bigg| \leq C\bigg[\frac1{\sqrt M}+ \frac{n^2}{M}\bigg]\frac{x}{m(x)}.$$
 \end{lemma}
 \begin{proof}  \,  Put $g(t)=f^\circ(t)/t$.
 Since $\alpha_n'(t) = \int_0^{n\pi/t} y\mu(y)\cos ty\,dy$, we have $|\alpha_n'(t)| \leq c(n\pi/t)
\leq \pi^2n^2c(1/t)$,  by which together with Lemma \ref{Lem22} we infer that if $ s <t <2s$, 
\beq
|\alpha_{2n}(t)g(t)- \alpha_{2n}(s)g(s)| &\leq& \alpha_{2n}(t)|g(t)- g(s)| +|\alpha_{2n}(t)- \alpha_{2n}(s) |g(s)\\
&\leq& C_1 \alpha_{2n}(t) g(t) \sqrt{|t-s|/s}  +C_2  n^2c(1/t) g(t)|t-s| \\
&\leq& C\big [\sqrt{|t-s|/s} + n^2 |t-s|/s\big] c(1/t)g(t)t.
\eeq
%where we have used   $\alpha_{2n}(t)\leq \pi^2 c(1/t)t$  for the last  inequality. 
Put $s_k = (M+ 2\pi k)/x $ for $k=0, 1, 2,\ldots.$ Then  
$|\alpha_{2n}(t)g(t)- \alpha_{2n}(s_k)g(s_k)| \leq C\big[1/\sqrt M + n^2/M\big] c(1/t)g(t)t$ for $s_{k-1}\leq  t \leq s_k$ and if $N=\lfloor x/2 -M/2\pi \rfloor$, then by  $\int_{s_{k-1}}^{s_k}\cos xt dt =0$
\beq
\bigg|\int_{M/x}^{s_N} \frac{\alpha_{2n}(t)f^\circ(t)}{t}\cos xt \,dt\bigg| &=& \bigg|\sum_{k=1}^N  \int_{s_{k-1}}^{s_k} \big[ \alpha_{2n}(t)g(t) -\alpha_{2n}(s_k) g(s_k)\big] \cos xt \,dt\bigg|\\
&\leq& C\big[1/\sqrt M + n^2/M\big]  \sum_{k=1}^N  \int_{s_{k-1}}^{s_k}  c(1/t)g(t)t dt\\
&\leq& C'\big[1/\sqrt M + n^2/M\big][x/m(x)],
\eeq
where the last inequality is due to the assumption (H).  Since  $0\leq \pi - s_N =O(1/x) $, this gives  the bound of the lemma. 
\end{proof}

%Proof of Prop
{\it Proof of  Proposition \ref{Prop5}.} \,Noting   $M^{-1/2} +n^2 M^{-1} +n^{-1}\leq 3M^{-1/3}$ for $n=M^{1/3}$  by Lemmas \ref{Lem30} and  \ref{Lem31}, %\beqn\label{1/4}
%\bigg|\int_0^{M/x} \alpha(t)g(t)(\cos Rt- \cos xt) dt\bigg| \leq \frac{CM\de x}{m(x)}
%\eeqn
we obtain    $I_n+I\!I_n \leq  C'M^{-1/3}[x/m(x)]$.   Hence  taking   $M =
 \de^{-3/4}$ and recalling (\ref{2_prP1}) lead to $|v_M(x)-v_M(R)| \leq C''\de^{1/4}[x/m(x)]$, which together with  (\ref{u/u})  yields  the required bound (\ref{x/R}). 
 \qed

%Lem3.8
 %\begin{lemma}\label{lem3.8}\, Let  $\de$, $M$ and $J$ be as above. Then   $ |J|\leq CM^{5/2}\delta/m(x).$ \end{lemma}

%%%Section5
\section{Proof of   Theorem \ref{Thm6}.}

 If (H)  holds, then by (\ref{eqL2.6}) of Lemma \ref{Lem17}  $\Re (1-e^{ixt})/[1-\psi(t)]$ is integrable over $|t|<\pi$,   which ensures 
%the condition $\lim m_+/m \neq 1/2$ we have
\beqn\label{a_rep}
a(x)= \frac1{2\pi}\int_{-\pi}^\pi \Re\frac{1- e^{ixt}}{1-\psi(t)}dt
\eeqn
since  $2\pi\sum_{n=0}^\infty s^n [p^n(0) -p^n(-x)]= \int_{-\pi}^\pi \Re\big[(1-e^{ixy})/(1-s\psi(t))]dt \to a(x)$ ($s\uparrow 1$) by virtue of Abel's lemma, and  it  follows that
\beqn\label{a/f}
a(x) = \frac1{\pi }\int_0^\pi \frac{\al(t)(1-\cos xt) - \gamma(t)\sin xt }{[\al^2(t) +\gamma^2(t)] t} dt. 
\eeqn
  Recalling  $\gamma(t) = \beta_+(t) -\beta_-(t)$, we put
\[
b_\pm(x) =\frac1{\pi}\int_0^\pi \frac{\beta_\pm(t)\sin xt}{[\al^2(t) +\gamma^2(t)]t}dt
\]
so that
\beqn\label{a/b}
 a(x) = \bar a(x) + b_-(x)-b_+(x).
 \eeqn

Choosing a positive integer $N$ such that $E[ \, |X|; |X| > N]\leq P[\, |X| \leq N]$,
define  a function $p_*(x)$ on $\mathbb{Z}$ by  $p_*(1)= E[\,|X|; |X|>N]$, $p_*(0) = P[\,|X|\leq N]- p_*(1)$ and  
\[
p_*(k) =\left\{ \begin{array}{ll} p(k)+p(-k) \quad  &\mbox{if \, $k < -N$},\\
0 &\mbox{if \,  $-N\leq k\leq -1$ or    $k\geq 2$},
\end{array}\right.
\]
where $p(k) =P[X=k]$.  Then  $p_*$ is a probability distribution on $\mathbb{Z}$  with zero mean.
Denote the corresponding functions by $a_*, b_{*\pm}, \al_*$, $\al_{*\pm}$,  etc. 
Since  $p_*(z) =0$ for  $z\geq2$ and $\sigma_*^2=\infty$, we have   for $x>0$,  $a_*(-x)= 0$ so that $a_*(x)=a_*(x)/2 + b_{*-}(x) - b_{*+}(x)$, hence 
\[
\bar a_*(x)=2^{-1} a_*(x) =  b_{*-}(x)-b_{*+}(x).
\]
We shall show that if $m_+(x)/m(x)\to 0$, then  
\beqn\label{*1}
\bar a(x) \sim \bar a_*(x),\quad  |b_+(x)|+|b_{*+}(x)| = o(\bar a(x))
\eeqn
and 
\beqn\label{*2}
b_{*-}(x) = b_-(x) + o(\bar a(x)).
\eeqn
These together  with (\ref{a/b})  yield 
\beqn\label{b/a}
b_-(x)= \bar a_*(x)\{1+o(1)\} = \bar a(x)\{1+o(1)\},
\eeqn
and hence
$a(x) = \bar a(x)\{1+o(1)\} +b_-(x) \sim 2\bar a(x)$, which shows $a(-x)/ a(x)\to 0$.  

The rest of
this section is devoted to the proof of (\ref{*1}) and (\ref{*2}). 
It is easy to see   
\[
\al_*(t) =\al(t)+O(t)
\]
\[
\beta_{*-}(t) =\beta_-(t) +\beta_+(t) +O(t^2)
\]
\[
\beta_{*+}(t)= p_*(1)\int_0^1(1-\cos tx)dx =O(t^2)
\]
(as $t\downarrow 0$). Let   $D(t)$, $t>0$ denote the difference 
\beq
D(t) :=f^\circ(t)- f^\circ_*(t) &=& \frac{1}{\al^2(t)+ \gamma^2(t)} - \frac{1}{\al_*^2(t)+ \gamma_*^2(t)}\\
&=& \{(\al^2_{*} -\al^2)(t)+ (\gamma_{*}^2 -\gamma^2)(t)\}f^\circ(t)f^\circ_*(t).
\eeq
Observe  that $(\gamma_*+\gamma)(t) =-2\beta_-(t)+ o(t^2)$, $(\gamma_*-\gamma)(t) =-2\beta_+(t)+ o(t^2)$ and that 
\beqn\label{*D}
(\al^2_{*}- \al^2)(t) = 2\al(t)\times O(t)\quad\mbox{and}\quad   (\gamma^2_{*} - \gamma^2)(t) = 4\beta_-(t)\beta_+(t)+  o(t^2).
\eeqn

Now we suppose $m_+(x)/m(x)\to 0$. Then  by (\ref{m1})   it follows that $f(t)\asymp f^\circ(t)\asymp f_*^\circ(t)$ and $\beta_-(t) \beta_+(t)/[\al^2(t)+\beta^2(t)] \to 0$, and hence that $D(t) = o(f(t))$, which implies that
$\bar a(x)\sim \bar a_*(x)$, the first relation of (\ref{*1}). The proofs of the second relation in (\ref{*1}) and of 
(\ref{*2}) are somewhat involved since we need to take advantage of the oscillating  nature of the integrals defining $\beta_{\pm}(t)$.   First we dispose of the non-oscillatory  parts of these integrals. 

By (\ref{m1}b) applied to  $\al_\pm+ \beta_\pm$ in place of $\al +\beta$ it follows  that if $m_+(x)/m(x)\to 0$, then $\lim_{t\downarrow 0} \frac{\al_+(t) + \beta_+(t)}{\al_-(t)+\beta_-(t)} =0$ (the converse is also true), which  entails 
\beqn\label{m2}\al(t) -\ga(t) \sim \al(t)+\beta(t) \asymp m(1/t)t,\eeqn

%Lem32
\begin{lemma}\label{Lem32} \, If $m_+(x)/m(x) \to 0$, then 
\[
\int_0^{1/x} f(t)\beta_+(t)dt = o(1/m(x)).
\]
\end{lemma}
\begin{proof}  \, By  (\ref{m1}) and  a change of variable  the assertion of the lemma is the same as
\[
\int_x^\infty \frac{\tilde m_+(y)}{ym^2(y)}dy = o(1/m(x)).
\]
Putting $g(x)= \int_x^\infty dy/y^2m(y)$ and integrating by parts we have
\[
\int_x^\infty \frac{\tilde m_+(y)}{ym^2(y)}dy = -\bigg[g(y)\frac{y\tilde m_+}{m}\bigg]_{y=x}^\infty +\int_x^\infty g(y)\bigg(\frac{y\tilde m_+}{m}\bigg)'dy
\]
as well as
\[
g(y) = \frac1{ym(y)} -\int_y^\infty \frac{\eta(u)}{um^2(u)}du. 
\]
On observing
\[
\bigg(\frac{y\tilde m_+}{m}\bigg)' =\frac{2y\eta_+}{m} -\frac{y\tilde m_+\eta }{m^2}
\leq \frac{2y\eta_+}{m}
\]
substitution leads to
\[
\int_x^\infty \frac{\tilde m(y)}{ym^2(y)}dy  \leq \frac{\tilde m_+(x)}{m^2(x)} +\int_x^\infty \frac{2\eta_+(y)}{m^2(y)}dy.
\]
The first term on the RHS is $o(1/m(x))$ owing to the assumption of the lemma since $\tilde m_+ \leq m_+$.
On the other hand on integrating by parts again
\beq
 \int_x^\infty \frac{ \eta_+(y)}{m^2(y)}dy = 
 -\frac{m_+(x)}{m^2(x)} + \int_x^\infty \frac{m_+(y)\eta(y)}{m^3(y)}dy&\leq& \sup_{y\geq x}\frac{m_+(y)}{m(y)}\int_x^\infty \frac{\eta(y)}{m^2(y)}dy\\
 & =& o(1/m(x)).
 \eeq
The proof is complete.
 \end{proof} 
 
%$$ \int_x^\infty \frac{ \eta_+(y)}{ym^2(y)}dy =  \frac{ \tilde m_+(y)}{ym_+^2(y)}\cdot \frac{ m_+(y)}{ym^2(y)} = \frac1{m(x)}.$$
%We shall need to evaluate the integral $\int_1^x c_+dy/m^2$.  
%Lem33{Lem33}
\begin{lemma}\label{Lem33} \,  If $m_+(x)/m(x)\to0$, 
\[
\int_1^x\frac{c_+(y)}{m^2(y)}dx =o\bigg(\frac{x}{m(x)}\bigg).
\]
\end{lemma}
\begin{proof}   \, The assertion of the lemma follows from the following identity for primitive functions 
\beqn\label{EQ}
\int\frac{c_+(x)}{m^2(x)}dx =2\int \frac{c(x)}{m^2(x)}\cdot \frac{m_+(x)}{m(x)}dx -\frac{xm_+(x)}{m^2(x)}.
\eeqn
This identity may be verified  by differentiation as well as   derived by integration by parts, the latter  giving
\[
\int\frac{c_+(x)}{m^2(x)}dx = \frac{x}{m_+(x)}\cdot\frac{m_+^2(x)}{m^2(x)} - 2\int \frac{x}{m_+(x)}\cdot\frac{(\eta_+c -c_+ \eta)m_+(x)}{m^3}dx,
\]
from which we deduce  (\ref{EQ}) by an easy algebraic manipulation. 
\end{proof} 
\vskip2mm

 %The following lemma is crucial (see also \ref{eqL3.2})) in order to handle  the oscillating part.

\vskip2mm
From Lemma  \ref{Lem22}   it follows that   for $x\geq 4$,
\beqn\label{eqL3.2}
|\al(t)-\al(s)| \vee |\beta(t)-\beta(s)|  \leq  \kappa c(1/t)t\quad\mbox{if \;\; $t\geq \pi/x$ and $s=t+\pi/x$}
\eeqn 
(with $\kappa< 2\pi^2$). We shall apply Lemma \ref{Lem22} only  in this form in the sequel.
 \vskip2mm

%proof of Thm 6
\textit{ Proof of (\ref{*1}).} We prove $b_+(x) = o(\bar a(x))$ only, $b_{*+}$ being dealt with in the same way. In view of  Theorem \ref{th:1} and Lemma \ref{Lem32} it suffices to show that
\beqn\label{pf_3.1}\int_{\pi/x}^\pi \frac{f^\circ(t)\beta_+(t)}{t}\sin xt \, dt =o\bigg(\frac{x}{m(x)}\bigg) \quad\mbox{where}\quad  f^\circ(t):=\frac{1}{\al^2(t)+\gamma^2(t)}. 
\eeqn
We make the decomposition 
\beq
2\int_{\pi/x}^\pi \frac{(f^\circ\beta_+)(t)}{t}\sin xt \, dt  &=& \int_{\pi/x}^\pi\frac{ (f^\circ\beta_+)(t)}{t}\sin xt \, dt  - \int_0^{\pi-\frac\pi x} \frac{(f^\circ\beta_+)(t+ \frac\pi x)}{t+\pi/x}\sin xt \, dt\\
&=& I(x) + I\!I(x) + I\!I\!I(x) + r(x)
\eeq
where
\[
I(x) =\int_{\pi/x}^\pi \frac{f^\circ(t) -  f^\circ(t+ \frac\pi x)}{t}\beta_+(t)\sin xt \, dt,
\]
\[
I\!I(x) =\int_{\pi/x}^\pi  f^\circ(t+ \pi/x)
\frac{\beta_+(t)- \beta_+(t+ \frac\pi x)}{t}\sin xt \, dt,
\]
\[
I\!I\!I(x)= \int_{\pi/x}^\pi  f^\circ(t+ \pi/x)
\beta_+(t+\pi/x) \Big(\frac1{t}-\frac{1}{t+\pi/x}\bigg)\sin xt \, dt
\]
and 
\[
r(x) = - \int_{0}^{\pi/x}\frac{ (f^\circ\beta_+)(t+\frac\pi x)}{t+ \pi /x}\sin xt \, dt  +  \int_{\pi-\pi/x}^\pi \frac{ (f^\circ\beta_+)(t+\pi /x)}{t+\pi/ x} \sin xt \, dt. 
\]

From  (\ref{eqL3.2}) (applied not only with $\mu$ but  with $\mu_\pm$ in place of $\mu$) we obtain
\[
\big|\beta_+(t+\pi/x)-\beta_+(t)\big| \leq \kappa c_+(1/t)t
\]
and
\[
\big |f^\circ(t+\pi/x)-f^\circ(t)\big| \leq C_1 c(1/t)  [f(t)]^{3/2}t
 \]
for $t>\pi/x$.
From the last inequality  together with $f^{3/2}(t)t = 1/t^2m^3(t)$, $\beta_+(t)\leq C_3\tilde m_+(1/t) t$ and $\tilde m_+(x)\leq m_+(x) =o(m(x))$ we infer that
\[
|I(x)| \leq C\int_{1/2}^{x/2}\frac{\tilde m_+(y)}{m(y)}\cdot\frac{c(y)}{m^2(y)}dy =o\bigg(\frac{x}{m(x)}\bigg).
\]
Similarly
\[
|I\!I(x)| \leq  C\int_{1/2}^{x/2}\frac{c_+(y)}{m^2(y)}dy  =o\bigg(\frac{x}{m(x)}\bigg)
\]
and
\[
|I\!I\!I(x)| \leq \frac{C}{x}\int_{1/\pi}^{x/\pi}\frac{\tilde m_+(y)y}{m^2(y)}dy =o\bigg(\frac{x}{m(x)}\bigg),
\]
where the equalities follow from Lemma \ref{Lem33} and the monotonicity of $y/m(y)$ in the bounds of  $|I\!I(x)|$ and $|I\!I\!I(x)|$, respectively. Finally
\[
|r(x)| \leq C\int_{x/2\pi}^{x/\pi} \frac{\tilde m_+(y)}{m^2(y)}dy + O(1/x)=o\bigg(\frac{x}{m(x)}\bigg).
\]
Thus we have verified  (\ref{pf_3.1})  and accordingly   (\ref{*1}).  \qed

\vskip2mm
\textit{ Proof of (\ref{*2}).} Recalling $D(t) = f^\circ(t)-f_*^\circ(t)$ we have
\beq
\pi \big[b_-(x) - b_{*-}(x)\big] &=& \int_0^\pi D(t)\frac{\beta_-(t)}{t}\sin xt\,dt +  \int_0^\pi f_*^\circ(t)\frac{\beta_-(t)- \beta_{*-}(t)}{t}\sin xt\,dt\\
&=& J +K \quad \mbox{(say)}.
\eeq
Suppose $m_+/m\to 0$. Since $\beta_{*-}(t)-\beta_-(t)= \beta_+(t)+O(t^2)$ and $f_*^\circ$ is essentially of the same 
regularity as $f^\circ$, the proof of (\ref{pf_3.1}) and Lemma \ref{Lem32} applies to $K$  on the RHS above to yield    $K=o(x/m(x))$. As for $J$ we first observe that  in view of (\ref{*D})
\beqn\label{*J}
|D(t)|\leq C[f(t)]^{3/2}(t+\beta_+(t))
\eeqn
 so that the integral defining $J$ restricted to $[0,\pi/x]$ is $o(x/m(x))$ in view of Lemma \ref{Lem32}. It remains to show that
\beqn\label{**J}
 \int_{\pi/x}^\pi D(t)\frac{\beta_-(t)}{t}\sin xt\,dt =o\bigg(\frac{x}{m(x)}\bigg).
 \eeqn

We decompose
$D(t) = D_1(t)+ D_2(t)$ where
\[
D_1(t) = \big[(\al^2_{*} -\al^2)(t)+ (\gamma_{*}^2 -\gamma^2)(t) -4(\beta_-\beta_+)(t)\big] f^\circ(t)f^\circ_*(t)
\]
and
\[
D_2(t) = 4(\beta_-\beta_+)(t)f^\circ(t)f^\circ_*(t).
\]
By (\ref{*D}) $|D_1(t)\beta_-(t)/t| \leq C_1\big(\al(t)+t\big) [f(t)]^{3/2} \leq C_2 c(1/t)/[t^2m^3(1/t)] $ and 
\[
\bigg| \int_{\pi/x}^\pi D_1(t)\frac{\beta_-(t)}{t}\sin xt\,dt\bigg| \leq C_2\int_{1/\pi}^\infty \frac{c(y)}{m^3(y)}dy = o(x/m(x))
\]
as is easily verified.
For the integral involving $D_2$ we proceed as in the proof of (\ref{pf_3.1}). To this end it suffices to evaluate the integrals corresponding to $I(x)$ and $I\!I(x)$,  namely 
\[
J_I(x) := \int_{\pi/x}^\pi \frac{D_2(t) -  D_2(t+ \pi/x)}{t}\beta_-(t)\sin xt \, dt,
\]
and
\[
J_{I\!I}(x) :=\int_{\pi/x}^\pi  D_2(t+ \pi/x)
\frac{\beta_-(t)- \beta_-(t+\pi/x)}{t}\sin xt \, dt,
\]
the other integrals being easily dealt with  as before.  By (\ref{eqL3.2})
the integrand for $J_{I\!I}(x)$ is dominated in absolute value by a constant multiple of 
$[f(t)]^{3/2}\beta_+(t)c(1/t) $ from which it follows immediately that  $J_{I\!I}(x) =o(x/m(x))$.
For the evaluation of $J_I(x)$, 
observe that
\[
\big|(\beta_-\beta_+)(t) - (\beta_-\beta_+)(t+\pi/x)\big| \leq C_1\big\{ \beta_+(t)c(1/t)t + \beta_-(t)c_+(1/t)t\big\}
\]
and
\[
\big|(f^\circ f)(t) - (f^\circ f)(t+\pi/x) \big|\beta_-(t) \leq C_1 c(1/t) t[f(t)]^{2}
\]
so that 
\beq
\frac{|D_2(t) -  D_2(t+ \pi/x)|\beta_-(t)}{t} &\leq&
 C\big\{  \beta_+(t)c(1/t) + \beta_-(t)c_+(1/t) \big\}[f(t)]^{3/2}\\
&\leq &C'\frac{\tilde m_+(1/t)c(1/t)}{t^2m^3{1/t)} }+ C'\frac{c_+(1/t)}{t^2 m^2(1/t)}.
\eeq
The integral of the first term of the last member  is immediately evaluated and that of the second  by Lemma \ref{Lem33}, showing
\[
\bigg| \int_{\pi/x}^\pi D_2(t)\frac{\beta_-(t)}{t}\sin xt\,dt\bigg| \leq C_2\int_{1/\pi}^\infty \bigg[\frac{\tilde m(y)c(y)}{m^3(y)}+ \frac{c_+(y)}{m^2(y)}\bigg] dy = o\bigg(\frac{x}{m(x)}\bigg).
\]
The proof of (\ref{*2}) is complete. \qed

%Positively R.S. Sectio 6
\section{Relatively stable distributions}

%Recall that In order that $F$ is positively r.s. it is necessary and sufficient that
%\v2
In this section,  $F$ will be recurrent (as elsewhere in this paper)  but allowed of the case $E|X|=\infty$, and relatively stable, namely
\v2 
 \beqn\label{RS}  |A(x)|/x\mu(x) \to\infty.
\eeqn
Let  $M(x)$ be a function of $x\geq x_0$ ($x_0\geq 0$ is a constant such that  $\inf_{x\geq x_0}|A(x)|>0$) defined by
\beqn\label{F_M}
M_\pm(x) = \int_{x_0}^x\frac{\mu_\pm(y)}{A^2(y)}dy \quad\mbox{and}\quad M(x)=M_+(x)+M_-(x).
\eeqn
Theorem \ref{thm7} readily follows from the following proposition as is discussed right after it.
%Prop34
\begin{proposition}\label{Prop34} Suppose that $F$ is recurrent and (\ref{PRS}) holds. Then as $x\to\infty$
\v2
{\rm (i)}  \;\,\; $ a(x)-a(-x) \sim 1/A(x);$ 
\v2
{\rm (ii)} \;\;  $2\bar a(x) \sim M(x);$ 
\v2
{\rm (iii)} \, $ a(-x)/a(x)\to 1$ if and only if $M_-(x)/M_+(x) \to 1;$ and 
\v2
{\rm (iv)} \, if $EX=0$ and  $m_-/m_+ \to 1$, then  $\bar a(x) m(x)/x \to \infty$.
%Clearly this is the case if $\mu_-(x)\mu_+(x)\to 1$.
\end{proposition}

 Suppose  (\ref{PRS}) to hold, i.e., (\ref{RS}) to hold with $A(x) >0$ for all sufficiently large $x$. Put $K(x)= \mu_+(x)-\mu_-(x)$.   Then 
 %$x|K(x)| \leq x\mu(x)=o(A(x))$,
$\log [A(x)/A(x_0)] =\int_{x_0}^x \ep(t)dt/t$ with $\ep(t)= tK(t)/A(t)\to 0$,   hence  $A(x)$ is s.v. 
By   $(1/A)'(x) =- K(x)/A^2(x)$, we have
\beqn\label{1/A}
\frac1{A(x)} -\frac1{A(x_0)} =  \int_{x_0}^x \frac{-K(y)}{A^2(y)}dy = M_-(x)- M_+(x),
\eeqn
in particular $M_-(x)\geq M_+(x) \vee[1/A(x)]- 1/A(x_0)$.
Because of the recurrence of the r.w.,  we have $M_-(x) \to\infty$ (see (P1) below), and  from (i) and (ii) one infers that
$$a(x) \sim M_-(x) \quad \mbox{and}\quad a(-x) = M_+(x) + o(M_-(x));$$
 it also follows that  $x\mu(x)/A^2(x) = o(1)\times 1/A(x)= o(M_-(x))$, so that both $M_-$ and $M$ are s.v.  Thus Theorem \ref{thm7}  follows from Proposition \ref{Prop34}. 

%Cor340
%\begin{corollary}\label{Cor340} In addition to the assumption of Proposition \ref{Prop34} suppose there exists $p:=\lim M_+(x)/M(x)$ and put $q=1-p$.  Then $$a(x) \sim \frac{q}{q-p}\frac{V_\ds(x)U_{\rm as}( x)}{x} \quad\mbox{and}\quad a(x) \sim \frac{q}{q-p}\frac{V_\ds(x)U_{\rm as}( x)}{x},$$ with the obvious interpretation in case $p=q$. \end{corollary}  

 The following results  are obtained under (\ref{PRS})  in \cite{Urenw}:

{\it If  (\ref{RS}) hold, then}
\v2

(P1)\quad {\it  $F$ is recurrent if and  only if $\int_{x_0}^\infty \mu(x)dx/A^2(x) =\infty$.}
\v2

(P2) \quad $\alpha(\theta) =o(\gamma(\theta))  \quad\mbox{and}\quad  \gamma(\theta) 
\sim - A(1/\theta)\quad (\theta\downarrow 0)$.
\v2
(P3) \quad there exists a  constant $C$ such that  for any $0<\ep< 1$ and for all $x$ large enough, 
\beqn\label{RS2}
\bigg|\int_{1/\ep x}^\pi \frac{f^\circ(t)\alpha(t)\cos\, xt} {t}dt\bigg| +  \bigg| \int_{1/\ep x}^\pi \frac{f^\circ(t)\gamma(t)\sin\,xt }{t}dt\bigg| \leq \frac{C\ep}{A(x)},
\eeqn
 where $f^\circ(t)=1/[\alpha^2(t) +\ga^2(t)]$ as before (see Lemmas 6, 4 and 8  of \cite {Urenw} for (P1), (P2) and (P3), respectively). When $F$ is transient,   an asymptotic form of the Green function of the r.w.
is computed in \cite{Urenw}.

\v2
{\it Proof of {\rm (i)}.}\; By (\ref{a/f})
$$a(x)-a(-x)= \frac2{\pi} \int_0^\pi \frac{-\gamma(t)f^\circ(t)\sin xt}{t}dt.$$
By (P2) 
\beqn\label{ga/A}
f^\circ(t) \sim 1/A^2(1/t) \quad \mbox{and}\quad -\ga(t)f^\circ(t)\sim 1/A(1/t)
\eeqn
($t\downarrow 0$).    Let  $0<\ep<1$. For the integral over $t\geq 1/\ep x$ we have the bound $C\ep/A(x)$ because of (P3), while
the integral restricted to  $t <\ep/x$ is dominated in absolute value a constant multiple of 
$x\int_0^{\ep/x}dt/A(1/t) \sim \ep/A(x)$. These reduce our task to showing that for each $\ep$
\beqn\label{gA}
\lim_{x\to\infty} \int_{\ep/x}^{1/\ep x}\frac{-\gamma(t)f^\circ(t)\sin xt}{t}dt\cdot A(x) = \int_\ep^{1/\ep} \frac{\sin t}{t}dt,
\eeqn
for the integral on the RHS converges to $\pi/2$ as $\ep\downarrow 0$.
On writing  $g(t)$ for $-\gamma(t)f^\circ(t)$  
the change of the variable  $t =w/x$ transforms the  function  under the above limit to
$$ \int_{\ep}^{1/\ep}[g(w/x)A(x)] \frac{\sin w}{w}dw.$$ %\sim \int_{\ep}^{1/\ep}\frac{ \sin w}{A(x/w)w}dw.$$
By the slow variation of  $A$ together with (\ref{ga/A}) it follows that  $g(w/x)A(x) \to 1$ as $x\to\infty$ uniformly for $\ep\leq w\leq 1/\ep$. Thus we have (\ref{gA}). \qed

\v2

{\it Proof of {\rm (ii)}.}\; 
Given a constant $0<\ep<1$, we decompose the integral in the representation of $\bar a$ given in  (\ref{**a}) as follows:
$$\pi\bar a(x) = I(x) - II(x) + I\!I\!I(x),$$
where
\beq
&&I (x)= \int_{1/\ep x}^\pi \frac{f^\circ(t)\alpha(t)}{t}dt, \qquad II (x)=  \int_{1/\ep x}^\pi \frac{f^\circ(t)\alpha(t)\cos xt}{t}dt,\\
&&\qquad\mbox{and}  \qquad I\!I\!I (x) = \int_0^{1/\ep x} \frac{f^\circ(t)\alpha(t)(1-\cos xt)}{t}dt.
\eeq
By (\ref{PRS}) it follows that
\beqn\label{c/A}
c(x) =\int_0^x y\mu(y)dy =o(xA(x)).
\eeqn
Using $f^\circ(t)\sim 1/A^2(1/t)$ (see (\ref{ga/A}))  as well as  $\alpha(t) \leq C_1tc(1/t)$, we then deduce that
$$I\!I\!I (x) \leq  C_1x^2 \int_0^{1/\ep x} \frac{c(1/t)t^2}{A^2(1/t)}dt =x^2 \int_{\ep x}^\infty \frac{dy}{y^3A(y)}dy \times o(1) = o\bigg(\frac1{A(x)}\bigg).$$
 By (P3) we also have 
 $$|II(x)| \leq C\ep/A(x).$$
  Hence for the proof of (2) it suffices to verify 
 $I(x)\sim \frac12 \pi M(x)$. Since  $\alpha(t)$ is positive,  this follows if we can show
 \beqn\label{claim}
 \tilde I(x):=\int_{1/\ep x}^{b} \frac{\alpha(t)}{A^2(1/t)t}dt \sim \frac{\pi}{2}M(x)
 \eeqn
 for some/any  positive constant $b\leq (1/x_0)\wedge \pi$.
We decompose the integral defining $\alpha(t)$ by splitting its range to have 
 \beq
 \alpha(t) &=& \bigg(\int_0^{\ep/t} +\int_{\ep/t}^{1/\ep t} + \int_{1/\ep t}^\pi\bigg) \mu(y)\sin ty\,dy\\
&=&   \alpha_1(t) +\alpha_{2}(t)+ \alpha_{3}(t) \qquad\mbox{(say)},
\eeq
and accordingly 
$$\tilde I(x) = \int_{1/\ep x}^b \big[\alpha_1(t) +\alpha_2(t)+\alpha_3(t)\big] \frac{dt}{A^2(1/t)t}.$$

Obviously  $\alpha_1(t) = \int_0^{\ep/t}\mu(y)\sin ty \,dt \leq t c(\ep/t)$, and integrating by parts leads to 
\begin{eqnarray}
\int_{1/\ep x}^b \alpha_1(t)  \frac{dt}{A^2(1/t)t} &\leq& \int_{1/b}^{\ep x} \frac{c(\ep y)}{A^2(y)y^2}dy
\nonumber\\
&=& -\frac{c(\ep^2 x)}{\ep xA^2(\ep x)}\{1+o(1)\} +\int_{1/b}^{\ep x} \frac{\ep^2 \mu(\ep y)dy}{A^2(y)}\{1+o(1)\} \nonumber\\
& \leq&  \ep M(x)\{1+o(1)\}.
\label{M1}
\end{eqnarray}

Noting  $\alpha_{2}(t) = \int_{\ep/t}^{1/\ep t}\mu(y)\sin ty\, dy =t^{-1}\int_{\ep}^{1/\ep}
\mu(w/t)\sin w\,dw$, we infer 
$$
\int_{1/\ep x}^b \alpha_2(t)  \frac{dt}{A^2(1/t)t} 
= \int_{\ep}^{1/\ep} \sin w\,dw \int_{1/\ep x}^b  \frac{\mu(w/t)dt}{A^2(1/t)t^2} = \int_{\ep}^{1/\ep} \sin w\,dw \int^{\ep x}_{1/b}  \frac{\mu(wy)dy}{A^2(y)}. 
$$
Changing the variable  $y=z/w$ transforms  the last repeated integral into
$$\int_{\ep}^{1/\ep} \frac{\sin w\,dw}{w} \int^{\ep wx}_{w/b}  \frac{\mu(z)dz}{A^2(wz)}.$$ 
Because of the slow variation of $A$, the inner integral is asymptotically equivalent to $M(\ep wx)$ as $x\to\infty$ uniformly for $\ep \leq w\leq 1/\ep$.
Since $M(x)$ is s.v., we can conclude that for each  $\ep$, as $x\to\infty$
\beqn\label{M2}
\int_{1/\ep x}^b \alpha_2(t)  \frac{dt}{A^2(1/t)t} \sim \bigg[\int_{\ep}^{1/\ep} \frac{\sin w}{w}dw\bigg]  M(x). 
\eeqn

For $t$ small enough, $1/A^2(1/t)t$ is decreasing and we choose  $\de>0$ so that this is  true for $t\leq \de$. Then by interchanging the order of integration 
\beq
\int_{1/\ep x}^b \alpha_3(t)  \frac{dt}{A^2(1/t)t} &=& \int_{1/\ep x}^\de \frac{dt}{A^2(1/t)t} \int_{1/\ep t}^\infty \mu(y)\sin ty\, dy + C_{\de,b}\\
&=&
\int_{1/\ep b}^{x} \mu(y) dy \int_{1/\ep y}^\de \frac{\sin ty\,  dt}{A^2(1/t)t}  + \int_{x}^{\infty} \mu(y) dy \int_{1/\ep x}^\de \frac{\sin ty\,  dt}{A^2(1/t)t} +  C_{\de,b}. 
\eeq
and  the inner integrals of the first and second repeated integrals are bound by a constant multiple of $\ep/A^2(\ep y)$ and $\ep x/[yA^2(\ep x)]$, respectively.  Observe that 
 $$\int_x^\infty \frac{\mu(y)}{y}dy \leq \int_x^\infty \frac{A(y)}{y^2}dy\times o(1) = o\big(A(x)/x\big).$$
Then, it is easy to see that
$$\int_{1/\ep x}^b \alpha_3(t)  \frac{dt}{A^2(1/t)t} \leq C \ep M(x) +  C_{\de,b}.$$
Combined with (\ref{M1}) and (\ref{M2}) this shows (\ref{claim}). The proof of (2)  is complete. \qed

\v2

{\it Proof of {\rm (iii)} and {\rm (iv)}.}\; (iii) follows from (i),  (ii) and (\ref{1/A}) since $M_-(x)\to \infty$.

Let $EX=0$.  If $m_+/m_-\to1$, then by Theorem \ref{th:1_3}(i) $\ga(t)=o(tm(1/t))$, which together with  (P2) shows (iv) owing to Theorem \ref{th:1_2}(ii).

%Then  $A(x) =\eta_-(x)-\eta_+(x) \leq \eta(x)$ and hence  (\ref{PRS}) implies $x\mu(x)/\eta(x)\to 0$ so that $\eta$ is s.v. and $x/m(x)\sim1/\eta(x)$. If  $\eta_-/\eta_+ \to 1$, then $[1/\eta(x)]' /M'(x) = A^2(x)/\eta^2(t)= [(\eta_-(x)-\eta_+(x))/\eta(x)]^2 \to 0$, hence  $[x/[m(x)]/\bar a(x) \to 0$, showing (iv).  \qed

%%%%%

%Applications_ Section 7
\section{Applications }     

Let $S_n^x = x+ X_1+\cdots +X_n$ be a r.w. on $\mathbb{Z}$ started at $x$, where $X_1, X_2, X_3,\ldots $ are independent and have the same distribution as $X$. For $B\subset \mathbb{Z}$ let  $\sigma^x_B$ denote the first hitting time of $B$ by $S^x$ after time 0 so that we have always 
$\sigma^x_B\geq 1$  with probability one (w.p.1).  We write $S^x_{\sigma_B}$ or sometimes  $S^x_{\sigma B}$ for $S^x_{\sigma^x_B}$ and $\sigma_y^x$ for  $\sigma_{\{y\}}^x$ to simplify the notation. Let $g_B(x,y) =  \sum_{n=0}^\infty P[ S^x_n =y, \sigma^x_B>n]$, the Green function of the r.w. killed as it hits $B$. Our definition of $g_B$ is not standard:  $g_B(x,y) = \delta_{x,y} + E[g_B(S_1^x,y); S_1^x\notin B]$ not only for $x\notin B$ but for $x\in B$, while $g_B(x,y) =\delta_{x,y}$ for all $x\in \mathbb{Z}, y\in B$. (Here $\delta_{x,y}$ designates  Kronecker's delta kernel.)  The function $g_B(\cdot, y)$ restricted on $B$ equals the hitting distribution of $B$  by  $\hat S^y= y-X_1 -X_2- \cdots -X_n$, the dual r.w. started at  $y$, in particular
\[
g_{\{0\}}(0,y) =1\quad (y\in \mathbb{Z}).
\]
(In \cite{S} the Green function of the r.w. killed on hitting $0$ is defined by 
$g(x,y) = g_{\{0\}}(x,y) - \delta_{0,x}$ 
so that $g(0,\cdot)=g(\cdot,0)=0$; the above identity is the same as $\sum_x p(x)g(x,y) =1- \de_{0,y}$.) The potential function  $a$ bears   relevance to $g_{\{0\}}$  through the identity 
\[
 g_{\{0\}}(x,y) = a^\dagger(x)+ a(-y) -a(x-y)
 \]
(cf. \cite[p.328]{S}), which entails
\beqn\label{eq:L6.1} 
P[\sigma^x_y<\sigma^x_0] =\frac{a^\dagger(x)+a(-y)-a(x-y)}{2\bar a^\dagger(y)}\quad\;\;  (y\neq x).
\eeqn
Here we put 
$$a^\dagger(x) =\delta_{x,0} +a(x)\quad \mbox{and}\quad \bar a^\dagger(x) = (a^\dagger(x) +a(-x))/2.$$
 When $x=y$,  it follows that 
$P[\sigma^{x}_{0}\leq \sigma^{x}_x] = P[\sigma^0_{-x}\leq \sigma^0_0] = 1/2\bar a^\dagger(x)$, so that
$$P[\sigma^x_x<\sigma^x_0] = 1- 1/2\bar a^\dagger(x).$$
 If the r.w. is left-continuous (i.e.,  $P[ X\leq -2]=0$), then $a(x)=x/\sigma^2$ for $x>0$; analogously  $a(x)=-x/\sigma^2$ for $x<0$ for right-continuous r.w.s;  $a(0)=0$, there exists $\lim_{x\to\infty}a(x) \leq \infty$  and $\inf_{x>0} a(x)>0$  except for the  left-continuous r.w.s \cite{Uladd}. 
 
Recall that the  condition $\lim_{x\to\infty}m_+(x)/m(x)=0$ is simply written as
$ m_+/m\to 0$;
we also write  $\eta_+/\eta_- \to \infty$, $m_+\asymp m$ etc. to indicate  corresponding conditions.
We shall use the letter $R$ to  denote a (large) positive integer without exception.  We suppose 
\beqn\label{N_RC}
P[X\geq 2] >0
\eeqn
so as to ensure  $a(-x)>0$ for $x\geq 1$,  the  right-continuous r.w.s being not interesting for the discussion in this section.  
  Recall also that  $Z$ (resp.  $\hat Z (<0)$) stands for  the strictly ascending (resp. descending)  ladder height of the r.w. $S^0_{\cdot}$.
  
  %For simplifying the description  we also suppose that the walk is not left continuous so as to ensure that $a(x)>0$ wheever  $x\neq0$.
  %and for $S^0_{\sigma_{(-\infty,-1]}}$, the first strictly descending ladder height.  

The rest of this section is divided into the six subsections: 
\vskip1mm

7.1. Some  asymptotic estimates of $P[\sigma^x_R<\sigma^x_0]$;\,  

7.2. Relative stability and overshoots; % Proposition \ref{prop6.1};

  7.3. Asymptotic form of  $P[\sigma^x_{[R,\infty)} <\sigma^x_{(-\infty,0]}] $; % Proposition \ref{prop6.2};
  
  7.4. Proof of Proposition \ref{Prop5}; % Proposition \ref{prop6.3};

7.5. Comparison between  $\sigma^x_R$ and $\sigma^x_{[R,\infty)}$; % Proposition \ref{prop6.4};

7.6. Escape into $(-\infty,-Q]\cup[R,\infty)$. % Propositions \ref{prop6.5} to \ref{prop6.7}.
\vskip1mm\n

\noindent
The last two subsections  can be  read independently of the subsections 7.3 and 7.4.

%Left-continuous walks being not interesting in the discussions in this section we suppose
%$P[X\geq 2] >0$ so that $a(x) >0$ for all  $x\geq 1$.
\vskip3mm
%{\bf 7.1.}
\textbf{ 7.1.}  {\sc Some  asymptotic estimates of $P[\sigma^x_R<\sigma^x_0]$.}
\vskip2mm

 The potential function  satisfies the functional equation
\beqn \label{eq2.6}%EQ
\sum_{y=-\infty}^\infty p(y-x)a(y) =a^\dagger(x),
\eeqn  
(cf. \cite[p.352]{S}), which restricted on $x\neq 0$ states that $a$ is harmonic there so that  the process $M_n:=a(S^x_{\sigma_0\wedge n})$ is a martingale for each $x\neq0$ and by the  optional sampling theorem  
\beqn\label{eq2.-1}%EQ
a(x)\geq E[a(S^x_{\sigma_0\wedge \sigma_y})]=a(y)P[\sigma^x_y<\sigma^x_0] \qquad  (x\neq 0).
\eeqn

  %LEM35
\begin{lemma}\label{Lem35}  For all  $x, y\in \mathbb{Z}$,
\beqn\label{L6.1}
-\, \frac{a(y)}{a(-y)} a(x)\leq a(x+y)-a(y) \leq a(x) \quad \mbox{if} \;\; a(-y) \neq 0.
\eeqn
\end{lemma}
\v2

  This is Lemma 3.2 of \cite{Uladd}. The right-hand inequality of (\ref{L6.1}), stating the  sub-additivity of $a$ that is given in \cite{S},  is the same as $g_{\{0\}}(x,-y)\geq 0$. The left-hand one, which  seems much less familiar, will   play a significant role in the sequel. Here we repeat the proof of it.   Comparing (\ref{eq:L6.1}) and (\ref{eq2.-1}) (with variables suitably chosen) we have
\beqn\label{prL6.1}
\frac{a(x) + a(y) - a(x+y)}{a(y)+a(-y)} \leq \frac{a(x)}{a(-y)} \quad (a(-y)\neq 0),
\eeqn
which, after rearrangement,  becomes the left-hand inequality of (\ref{L6.1}). 
 
 %Rem6.1
 \begin{remark} \label{Rem36} %(i)  On using  \cite[Theorem 30.2]{S}   the left-hand inequality of (\ref{L6.1}) can be shown to be  the same as $\lim_{|w|\to\infty} P[\sigma^w_{x}< \sigma^w_0 \wedge \sigma^w_{-y}]  \geq 0$. 
 (i)  The left-hand inequality of (\ref{L6.1}) may  yield useful  upper  as well as lower bounds of the middle term.  Here we write down such ones in a form used later: if $a(R)a(R-x)\neq 0$,
\beqn\label{eL6.2}
  -\frac{a(x-R)}{a(R-x)}a(-x)\leq  g_{\{0\}}(x,R) -  a^\dagger(x) =a(-R)- a(x-R)  \leq \frac{a(-R)a(x)}{a(R)}.
\eeqn  
Both inequalities are deduced from the left-hand inequality of (\ref{L6.1}):
  the lower bound  follows by replacing $y$ and $x$ with $x-R$ and $-x$, respectively and   the upper bound by replacing $y$ with $-R$. Since $P[\sigma_R^x<\sigma_0^x]= g_{\{0\}}(x,R)/2\bar a(R)$, 
\beqn\label{eqL6.20}
  -\frac{a(x-R)}{a(R-x)}\cdot\frac{a(-x)}{2\bar a(R)}\leq  P[\sigma_R^x<\sigma_0^x]- \frac{a^\dagger(x)}{2\bar a(R)}  \leq  \frac{a(-R)a(x)}{2a(R)\bar a(R)}.
\eeqn
(\ref{eL6.2}) is quite efficient in case $a(-R)/a(R)\to0$  and will be used later.
 
%-\; \frac{a(x-R)}{a(R-x)} \cdot \frac{a(-x)}{a^\dagger(x)}  \leq \frac{a(-R)-a(x-R)}{a^\dagger(x)}  \leq \frac{a(-x)}{a^\dagger(x)},\eeqn
%since $g_{\{0\}}(x,R) -a(x)= a(-R)- a(x-R) $ the same inequalities may also be written as
%with $\kappa =  \sup_{z>0} a(-z)/a(z)$. 

(ii)  By (\ref{eq:L6.1}) and the sub-additivity of $a$
\[
P[\sigma^x_0<\sigma^x_y] = \frac{a(x-y) +a(y)- a^\dagger(x)}{2\bar a(y)}  \leq \frac{\bar a(y-x)}{\bar a(y)} \quad \;\; (y\neq 0, x).
\]
 \end{remark}
\vskip2mm

% $H^{z}_{\{0,R\}}\{R\} $, $H^{z}_{\{-x,0,y\}}\{y\}$
 
 %Lem37 
  \begin{lemma}\label{Lem37}\,\,Suppose ${\displaystyle \lim_{z\to\infty}}  a(-z)/a(z) = 0$. Then
  
    {\rm (i)}   uniformly for $x\leq -R$,  as $R\to\infty$
  \[
  \frac{a(x)-a(x+R)}{a(R)} \, \longrightarrow \, 0\quad\mbox{and}\quad 
 P[\sigma^x_{-R}<\sigma^x_0] \,\longrightarrow \,1.
 \]

{\rm (ii)}  uniformly for $-M <  x<R$ with any fixed $M>0$, as $R\to\infty$
\[
\frac{a(-R)-a(x-R)}{a^\dagger(x)}\, \longrightarrow \, 0 \quad\mbox{and} \quad P[\sigma^x_R<\sigma^x_0] \,= \,\frac{a^\dagger(x)}{a(R)}\{1+o(1)\}.
\]
\end{lemma}

\begin{proof}  
Suppose $\lim_{z\to\infty} a(-z)/a(z) = 0$. This excludes the possibility of the left-continuity of the r.w. so that $a^\dagger(x)>0$ for  all $x$ (because of (\ref{N_RC}))  and 
 (ii) follows immediately from (\ref{eL6.2}) and  (\ref{eqL6.20}).  (i) is deduced from (\ref{L6.1})  as above (substitute $x+R$ and $-R$ for $x$ and $y$ respectively for the lower bound; use sub-additivity of $a$  for the upper bound).  
  \end{proof} %\qed
 
By virtue of  Proposition \ref{Prop5} and Theorem \ref{Thm6}, 
 Lemma \ref{Lem37}(ii) entails the following. 
 
  %Cor38
\begin{corollary}\label{Cor38}\,  Suppose $m_+/m \to 0$. 
 Then $P[\sigma_R^{x}<\sigma_0^{x}]  \to 1$ as $x/R\uparrow1$. 
\end{corollary}

  By Remark \ref{Rem36}(ii)  we know that   $P[\sigma_R^{R-y}<\sigma_0^{R-y}] \geq 1-\bar a(y)/\bar a(R)$, which estimate  is better   than the one given above in most  cases, but does not generally  imply  the consequence in Corollary \ref{Cor38} since $ \bar a(y)/\bar a(R)$ may possibly approach unity even if $y/R\downarrow 0$ (cf. Lemma \ref{Lem45}(i)). 
  By the same token  $P[\sigma_R^{x}<\sigma_0^{x}]$ may approach zero in case  $x/R \downarrow 1$  (under $m_+/m\to0$).

\begin{lemma}\label{Lem39}  Suppose $\de:= \limsup_{y\to\infty} a(-y)/a(y) <1$. Then  
$$\lim_{x\to \infty} \inf_{y\geq x} \frac{a(-y)}{a(-x)} \geq 1-\de.$$
In particular if $\de=0$ then  $a(-x)$ is asymptotically increasing with  $x$ in the sense that there exists an increasing function $f(x)$ such that $a(-x) =f(x)\{1+o(1)\}$ ($x\to\infty$)..
\end{lemma}
\pf\, For any  $\de'\in (\de,1)$ choose $N>0$  such that $a(-z)/a(z)<\de'$ for all $z\geq N$ and  let $N\leq x\leq y-N$ and  $z=y-x$. If  $a(-x)\leq a(-z)$, then on using (\ref{L6.1})
\beqn\label{aaa}
a(-y)-a(-z)= a(-x-z) -a(-z) \geq -\frac{a(-z)}{a(z)}a(-x) \geq -\frac{a(-z)}{a(z)}a(-z),
\eeqn
which by a simple rearrangement leads to
   $$a(-y) \geq [1- a(-z)/a(z)]a(-z) \geq (1-\de')a(-z)\geq (1-\de')a(-x).$$
   In case $a(-x)\geq a(-z)$, interchanging the roles of  $x$ and $z$ in (\ref{aaa}), we have $a(-y) \geq  (1-\de')a(-x)$ for $x>N$. Since $\lim_{x\to\infty}  a(-y)/a(-x)=1$ uniformly for  $x \leq y<x+N$ and $\de' -\de$ can be
   made arbitrarily  small we conclude the inequality of the lemma. \qed
 %  $$a(-x)-a(-y)\geq -\frac{a(-y)}{a(y)}a(-z) \geq -\frac{\ep}{1-\ep}a(-x),$$
    %where what is just obtained is used for the second inequality. This shows $a(-x)\geq [1+[\ep/(1-\ep)]]^{-1} a(-y)$

    \vskip2mm
 %{\bf 7.2.} 
\textbf{ 7.2.} {\sc Relative stability of $Z$ and overshoots.}
\vskip2mm
     When  $EZ<\infty$,  according to  a standard  renewal theory  the law of the overshoot 
     \beqn\label{OVS}
 Z_R:=    S^0_{\sigma_{[R,\infty)}} -R
  \eeqn
      itself converges weakly as $R\to\infty$   to a proper probability distribution  \cite[(XI.3.10)]{F}; it also follows that $\lim_{n\to\infty}Z_n/n=0$ w.p.1, that entails   $\lim Z_R/R=0$ w.p.1.
On the other hand  in  case $EZ=\infty$, 
according to  Kesten \cite{K4} (or \cite[Section 4]{K}),
$\limsup_{n\to\infty}Z_n/[Z_1+\cdots Z_{n-1}] =\infty$ w.p.1,
where $Z_k$ are i.i.d.\;copies of $Z$,  or equivalently  $\limsup Z_R/R 
= \infty$ w.p.1.  It in particular follows that  
\[
\lim_{R\to\infty} Z_R/R =0\quad \mbox{w.p.1 \quad if and only if} \quad EZ<\infty.
\]  
In this subsection  we are interested in the convergence in probability which may be sometimes more significant than the w.p.1\;convergence  and holds true under a much weaker condition. In this respect the following result due to 
Rogozin \cite{R} is relevant:
\beqn\label{rgzn}
\begin{array}{ll} \mbox{\textit{$Z $ is r.s. if and only if  $Z_R/R \stackrel{P}\longrightarrow 0\,;$  and for this}}\\
\mbox{\textit{to be the case  it is sufficient that $X$ is positively r.s. }}
\end{array}
\eeqn
%Here  an $\mathbb{R}$-valued random variable $\xi$ is called  \textit{relatively stable} if there exists a deterministic sequence $(B_n)$ such that $(\xi_1+\cdots+\xi_n)/B_n \,\stackrel{P}\longrightarrow  \,1$, where $\xi_1, \xi_2,\ldots $ are i.i.d.\;random variables  with the same distribution  as $\xi$; if $B_n>0$ in the above, we call $\xi$   \textit{positively relatively stable} after \cite{KM}. Obviously the positive relative stability of $X$ entails $\sigma_{[0,\infty)}>0$ w.p.1 so that $Z$ is a proper random variable.

By combining Theorem \ref{Thm6} and a known criterion for positive relative  stability of $X$ (cf.  \cite[Eq(1.15)]{KM0}) we obtain a  reasonably fine sufficient condition  for $Z$ to be r.s.
For  condition (C.I) in the  following result we do not assume $E|X|<\infty$ nor the recurrence of  the r.w. $S_.$. Recall $A(x) = \int_0^x \{\mu_+(y)-\mu_-(y)\}dy$ as defined in (\ref{A}).
%Prop6.1
\begin{proposition}\label{Prop40}\, For $Z$ to be relative stable each of the following conditions  is sufficient.
%conditions (C1) and (C2)
\vskip2mm
{\rm (C.I)}\quad   ${\displaystyle \lim_{x\to\infty} \frac{A(x)}{x\mu(x)} = \infty},$
\vskip2mm
{\rm (C.II)}\quad  $EX=0$ and  ${\displaystyle \lim_{x\to\infty} \frac{x\eta_+(x)}{m(x)}=0}$.
\end{proposition}
\begin{proof}  \,  %According to \cite{KM0} (see around Eq(1.15)  of it)  
Condition (C.I) is equivalent to the positive relative stability of $X$ as is already mentioned and 
 hence it is a sufficient condition for relative stability of $Z$ in view of  (\ref{rgzn}). 
As for (C.II),  expressing 
$
P[ Z_R >\ep R] $ as the infinite series 
\[
\sum_{w\geq 1}g_{ [R,\infty)}(0,R-w) P[X > \ep R+w], 
\]
one  observes first that $g_{ [R,\infty)}(0, R-w) < g_{\{0\}}(-R, -w)\leq g_{\{0\}}(-R, -R) =2\bar a(R)$.  Note
that  (C.II) entails   (\ref{H0}) and accordingly  (H) holds so that  $\bar a(x)\asymp x/m(x)$ 
 by Theorems \ref{th:1} and  \ref{th:1_2}. Hence  for any $\ep>0$
\[
P[ Z_R >\ep R]  \leq 2\bar a(R)\sum_{w>0}P[X > \ep R+w] \asymp \frac{R\eta_+(\ep R)}{m(R)}\leq \frac{R\eta_+(\ep R)}{ m(\ep R)}.
 \]
Thus  (C.II) implies $Z_R/R\, \stackrel{P}\to \,0$, concluding the proof   in view of (\ref{rgzn}) again.  
 \end{proof} 
  
 \vskip2mm
 %Rem41
\begin{remark}\label{Rem41}\, (a)  Condition (C.I) is stronger than (c$'$) (given in Remark \ref{Rem11}) which by (\ref{A/KM2}) is necessary and sufficient  in order that   $P[S_n>0] \to 1$, 
%and in this case  $V_\ds$ is slowly varying 
so that  (C.I) is  only of relevance  in such a case.   If $\mu_-/\mu$ is bounded away from zero, then (C.I) obviously follows from $(c')$.

(b)\, Condition (C.II)  is satisfied if $m_+/m \to 0$. The converse is  of course not true---(C.II) may be fulfilled even if $m_+/m \to 1$---and it  seems hard to find  any simpler substitute for (C.II).  Under the restriction
$ m_+(x) \asymp m(x)$, however,    (C.II) holds  if and only if  $m_+$ is s.v. (which is the case if  $x^2\mu_+(x) \asymp L(x)$ with a s.v. $L$).  
%for $x\eta_+/m_+\to 0$  is equivalent to the slow variation of $m_+$. 

(c) \, If $F$ belongs to the domain of attraction of a stable law  and   Spitzer's condition  holds, %(namely $ n^{-1}\sum_1^n P[S^0_n>0]$ converges),  
then that either (C.I) or (C.II)  holds is  also necessary for $Z$ to be r.s.
 as will be discussed in Section 8.1.2. 
 \end{remark}
 %If $EX=0$ and\v2\quad  $\exists \lam_0>1, \quad \liminf \eta_+(\lam_0 x)/\eta_+(x) >0,$
%\v2\noindent
%then under a certain mild regularity condition on $a$ one can show that (C.I\!I) is necessary for the relative stability of $Z$.

\vskip2mm
The following result, used in the next subsection, concerns an overshoot estimate for the r.w. conditioned on avoiding the origin.  Let  $\bar a^\dagger(x) := \frac12\big[a^\dagger(x)+a(-x)\big]$.

%Lem42
\begin{lemma}\label{Lem42}\, {\rm (i)}
Let $\delta$ be a positive number.  Then  for $z>0$
and  $x<R$ satisfying 
$P\big[\sigma^x_{[R,\infty)}<\sigma^x_0\big] \geq \delta \bar a^\dagger(x)/ \bar a(R) $,
 \beqn\label{P6.211}
 P\big[S^x_{\sigma{[R,\infty)}}> R +z \,\big|\, \sigma^x_{[R,\infty)}<\sigma^x_0\big] \leq 2\delta^{-1}\bar a(R)\eta_+(z).
\eeqn

{\rm (ii)}  If $m_+/m \to  0$, then for each $\ep>0$,  uniformly for $0\leq x<R$,
 \beqn\label{eq:P6.20}
 P\big[S^x_{\sigma{[R,\infty)}}> R+\ep R \,\big|\, \sigma^x_{[R,\infty)}<\sigma^x_0\big] \to 0\quad (R\to\infty).
 \eeqn
 \end{lemma}
\v2
 
The condition  $P[\sigma^x_{[R,\infty)}<\sigma^x_0] \geq \delta \bar a^\dagger(x)/ \bar a(R) $
is  obviously weaker than $P[\sigma^x_{R}<\sigma^x_0] \geq \delta \bar a^\dagger(x)/ \bar a(R) $, but only the latter,  easier to check and valid for any $x$  fixed, will be used in our applications of (i).
\begin{proof}  \,  Suppose    $\bar a(x)\asymp x/m(x) \; (x\geq1)$ and  put
\[
r(z)= P\big[ S^x_{\sigma[R,\infty)} >R+z, \sigma^x_{[R,\infty)}<\sigma^x_0\big].
\]
 Plainly  $g_{\{0\}\cup [R,\infty)}\leq g_{\{0\}}$  and
 $g_{\{0\}}(x,z)\leq 2\bar a^\dagger(x)$,   hence
\[
r(z) =  \sum_{w\geq1}g_{\{0\}\cup [R,\infty)}(x,R-w) P[X > z+w] \leq 2\bar a^\dagger(x)\sum_{w\geq1}P[X > z+w].
\]
By 
$\sum _{w\geq1} P[X > z+w] = \eta_+(z+1) $ it therefore  follows that  if  $P[\sigma^x_{[R,\infty)}<\sigma^x_0] \geq \delta \bar a^\dagger(x)/ \bar a(R)$, 
\[
r(z) \leq 2\bar a^\dagger (x)\eta_+(z) \leq P[\sigma^x_{[R,\infty)}<\sigma^x_0] \times 2\delta ^{-1} \bar a(R) \eta_+(z)
\]
and dividing by $P[\sigma^x_{[R,\infty)}<\sigma^x_0]$ we find    (\ref{P6.211}).

Suppose $m_+/m \to  0$. Then  $\bar a(x) \asymp x/m(x)$ owing to Theorem \ref{th:1_2} , and 
 $a(-x)/a(x)\to 0$ ($x\to\infty$) by  Theorem  \ref{Thm6}.  We can accordingly apply  Lemma \ref{Lem37} to see that    $P[\sigma^x_R<\sigma^x_0] = \bar a^\dagger(x)/ \bar a(R) \{1+o(1)\}$ uniformly  for $0\leq x<R$, so that (\ref{P6.211}) obtains on the one hand.  On the other hand for $z>0$, recalling $\eta_+(z)< m_+(z)/z$  we deduce that 
  \beqn\label{AB}
 \bar a(R)\eta_+(z)\leq C\frac{m_+(z)\bar a(R)}{m(z)\bar a(z)},
 \eeqn
of which  the RHS with $z=\ep R$ tends to zero.  Thus  (\ref{eq:P6.20}) follows.
%\beqn\label{eL5.4}
%r(z) \leq 2\bar a^\dagger (x)\eta_+(z) \leq \frac{2 \bar a^\dagger(x)m_+(z)}{m(z)}\cdot
% \frac{m(z)}{z} \leq C'\frac{ \bar a^\dagger(x)m_+(z)}{m(z)\bar a(z)}.
% \eeqn
%Hence, 
 %$$r(z) \leq P[\sigma^x_R<\sigma^x_0] \times  C\frac{m_+(z) \bar a(R)}{m(z)\bar a(z)}\leq  P[\sigma^x_{[R,\infty)}<\sigma^x_0]\times  C\frac{m_+(z) \bar a(R)}{m(z)\bar a(z)},$$
 \end{proof}

%Rem
%\begin{rem} In the above proof the assumption 
%\end{rem}
 \vskip2mm
 
 We shall need an estimate of overshoots  as the r.w. exits from the half line $(-\infty, -R]$ after its entering  into it.
 Put 
 \beqn\label{tau}
 \tau^x(R)= \inf \big\{n>\sigma^x_{(-\infty,-R]} : S^x_n \notin (-\infty, -R]\big\},
 \eeqn
 the first time when the r.w. exits  from $(-\infty,-R]$ after once entering it.
 %Lem43
\begin{lemma}\label{Lem43} \, Suppose $m_+/m\to0$. Then for each constant  $\ep>0$,   uniformly for $x>-R$ satisfying 
$P[\sigma^x_{(-\infty,-R]}<\sigma^x_0] \geq \ep \bar a^\dagger(x)/ \bar a(R) $, as $R\to\infty$
\[
P\big[S^x_{\tau(R)}>-R+\ep R \,\big|\, \sigma^x_{(-\infty,-R]}<\sigma_0^x\big]  \to 0.
\]
\end{lemma}

\begin{proof} \, %If the family of random variables $S^x_{\sigma(-\infty,-R]}/R$ is tight, then the assertion is immediate from the preceding lemma. To deal with the general case,   
Denoting by ${\cal E}^x$   the event  $\{\sigma^x_{(-\infty,-R]}<\sigma^x_0\}$ we write down 
\beqn\label{eq:L6.41}
P\big[S^x_{\tau(R)}>-R+\ep R\,\big|\, {\cal E}^x ]=\sum_{w\leq -R} P\big[S^x_{\sigma(-\infty,-R]}=w\,\big|\, {\cal E}^x\big]P\big[S^w_{\sigma(-R,\,\infty)}>-R+\ep R\big].
\eeqn
 If  $P[{\cal E}^x] \geq \ep \bar a^\dagger(x)/ \bar a(R) $, by Lemma \ref{Lem42}(i) (applied to  $-S^x_\cdot$) 
\[
 P\big[S^x_{\sigma (-\infty, -R]}< - R -z \,\big|\, {\cal E}^x\big] \leq 2\ep^{-1}m(z)\bar a(R)/z.
 \]
Given    $\delta>0$ (small enough) we  define $\zeta=\zeta(\delta, R)$ $(>R)$ by the equation
\[
\frac{m(\zeta)R}{ m(R)\zeta} =\delta
\]
(uniquely determined since  $x/m(x)$ is increasing), so that by  $2\bar a(R) <CR/m(R)$
\[
 P\big[S^x_{\sigma (-\infty, -R]}< - R - \zeta \,\big|\, {\cal E}^x \big] \leq2\ep^{-1} m(\zeta)\bar a(R)/\zeta \leq   (C\ep^{-1}) \delta.
 \]
 For $-R -\zeta \leq  w \leq -R$,
\beq
P\big[S^{w}_{\sigma(-R,\infty)} >-R +\ep R\big] &=&  \sum_{y > -R+ \ep R}\sum_{z \leq -R} g_{(-R, \, \infty)}(w,z)p(y-z) \\
&=& \sum_{y > \ep R}\sum_{z \leq 0} g_{[1,\infty)}(w+R,z)p(y-z)\\
&\leq& C_1\bar a(\zeta) \eta_+(\ep R)\\
&\leq& \delta^{-1}C' R\eta_+(\ep R)/m(R),
\eeq
where the first inequality follows from  $g_{[1,\infty)}(w+R,z) \leq g_{\{1\}}(w+R,z)\leq \bar a(w+R-1)$ and the second from  $\bar a(\zeta) \leq C_2 \zeta/m(\zeta)$ and the definition of $\zeta$.  %%%%%%%%%%
Now, returning to  (\ref{eq:L6.41}) we 
%divide both sides of it by  $P[\sigma^x_{(-\infty,-R] }<\sigma_0^x]$ and  
apply the bounds derived above  to see 
\[
P\big[S^x_{\tau(R)}>-R+\ep R \,\big|\, \sigma^x_{(-\infty,-R] }<\sigma_0^x\big] \leq  (C\ep^{-1}) \delta +\delta^{-1}C' R\eta_+(\ep R)/m(R),
\]
Since  $R\eta_+(\ep R)/m(R) \to 0$ and  $\delta$ may be arbitrarily small, this concludes the proof.
\end{proof}
\vskip2mm

If $\tau$ is a stopping time of the r.w.  $S^0$ and ${\cal E}$ is an event depending only on  $\{S^0_n, n\geq \tau\}$, then by strong Markov property  $P[{\cal E},  \sigma^0_0 < \tau]= P({\cal E}) P[\sigma^0_0<\tau]$ so that   ${\cal E}$ is stochastically independent of $ \{\sigma^0_0<\tau\}$ and hence of $\{ \sigma^0_0\geq \tau\}$. Since $\{\tau^0(R)\leq \sigma^0_0\}$ coincides with $\{\sigma^0_{-\infty,-R]}<\sigma_0^0\}$, it follows that
\beqn\label{stch_ind}
P\big[S^0_{\tau(R)}= x\big] = P\big[S^0_{\tau(R)}=x\, \big|\, \sigma^0_{(-\infty, -R]} < \sigma^0_0 \big]
%P[S^0_{\sigma(-\infty, -R]}=y ] = P[S^0_{\sigma_(-\infty, -R]}=y\,|\,  \sigma^0_{(-\infty, -R]} < \sigma^0_0],
\eeqn 
so that Lemma \ref{Lem43} yields the following 
 %Cor44
\begin{corollary}\label{Cor44} \, Suppose $m_+/m\to0$. Then for any $\ep>0$, as $R\to\infty$
\[
P[S^0_{\tau(R)}>-R+\ep R ]  \to 0.
\]
\end{corollary}

%If $F$ is attracted to the spectrally negative stable law of exponent one, then  $S^0_{\sigma(-\infty,-R]}/R \stackrel{P} \to -\infty $ (cf. \cite{Uattrc}) but, according to
%the above corollary,  still $S^0_{\tau(R)}/R \stackrel{P} \to 0$,  hence for a suitable function $M(R) \to \infty$, $S^{-RM(R)}_{\sigma[0,\infty)}/R\stackrel{P} \to 0$. 

\v2
%{\bf 7.3.}
\textbf{ 7.3.} {\sc Asymptotic form of  $P[\sigma^x_{[R,\infty)} <\sigma^x_{(-\infty,0]}] $ under $m_+/m\to0$.}
%Comparison between  $g_{\{0\}}(x,y)$ and $g_B(x,y)$.
\v2
Put  
\[
k:=\sup_{x\geq 1}\frac{a(-x)}{a(x)},
\] 
with the understanding that $k=\infty$ if   the r.w. is left-continuous.
 The following bounds that play a crucial  role in this section is  taken from   \cite[Lemma 3.4]{Uladd}:  
%\begin{lemma}\label{lem6.3} For $x\in \mathbb{Z}, y\in \mathbb{Z}$,
\beqn\label{sb6.3}%\beqn\label{prL5.7}
0\leq g_{\{0\}}(x,y) - g_{(-\infty,0]}(x,y) \leq % \left\{\begin{array}{ll} 
(1+k) a(-y) \qquad (x, y\in \mathbb{Z}).
\eeqn
%\end{lemma}
%By the duality relation    $g_{(-\infty,0])}(x,y)  =g_{[0, \infty)}(-y,-x)$  (\ref{prL6.1}) also yields the bounds of $g_{[0, \infty)}(-y,-x)$.
%\[0\leq g_{\{0\}}(x,y) - g_{[0, \infty)}(x,y) \leq  (1+k)a(x).  \] 
For $0\leq x\leq y$ we shall apply  (\ref{sb6.3})    in the slightly weaker form
\beqn\label{RL6.2}
-a(-x) \leq  a^\dagger(x) - g_{(-\infty,0]}(x,y) \leq (1+k)a(-y)+ka(-x),
\eeqn
  where the sub-additivity  $a(-y)-a(x-y)\leq a(-x)$ is used for the lower bound and the inequalities $a(x-y) -a(-y)\leq [a(x-y)/a(y-x)]a(-x)\leq ka(-x)$ for the upper bound.
%(If the walk is not right-continuous so that $a(-x) \neq 0$, then $O(1)$ appearing in (\ref{prL5.7})  is absorbed in $Ca(x-y)$; for the right-continuous walk 
%$a^\dagger(x)= g_{\{0\}}(x,y)=  g_{(-\infty,0]}(x,y)$ and the assertion is trivial.) 

%Lem6.6
%\begin{lemma} \label{lem6.6} If $\lim_{x\to\infty}a(-x)/a(x)= 0$, then uniformly for $1\leq x<R$, as $R\to\infty$
%$$  P[\sigma^x_R < \sigma^x_{(-\infty,0]}] \geq \frac{a(x) - O\big(a(-R))}{a(R)+a(-R)}. $$
%\end{lemma}
%\pf   Since   $ P[\sigma^x_R < \sigma^x_{(-\infty,0]}]={g_{(-\infty,0]}(x,R) }/{g_{(-\infty,0]}(R,R) } $ and $0\leq 2\bar a(R)- g_{(-\infty,0]}(R,R)$, the asserted bound is readily deduced from the right-hand inequality of  (\ref{RL6.2}). \qed

%In the proof of Proposition \ref{prop5.1} we have replaced $g_{[0,\infty)}$ by its upper bound $g_{\{0\}}$ in order to  make use of some estimates that are available. It may be worth to evaluate the error of the replacement  for consideration of in what situation it neglects  a significant quantity or not.

 %The supposition made  right above implies  $E|\hat Z|=\infty$ and hence by \cite{S} $H^x_{(-\infty,0]}\{z\} \to 0$ ($x\to\infty$) for all  $z<0$.  For  if  $E|\hat Z|<\infty$, then  $m_-(x)/m(x) \to 0$ \cite{Unote} and hence  $a(-x)/a(x)\to\infty$, a contradiction. 
\v2 
Put  $u_{\rm as}(0)=1$,  $u_{\rm as}(x)=\sum_{n=0}^\infty P[Z_1+\cdots +Z_n =x]$ ($x\geq1$), where $Z_1, Z_2,\ldots $ are i.i.d. copies of $Z$, and
  %The supposition made  right above implies  $E|\hat Z|=\infty$ and hence by \cite{S} $H^x_{(-\infty,0]}\{z\} \to 0$ ($x\to\infty$) for all  $z<0$.  For  if  $E|\hat Z|<\infty$, then  $m_-(x)/m(x) \to 0$ \cite{Unote} and hence  $a(-x)/a(x)\to\infty$, a contradiction. 
 \beqn\label{ell_pm}
  \ell_+(x)= \int_0^x P[Z>t]dt, \quad \ell_-(x)= \int_0^x P[-\hat Z>t]dt.
  \eeqn
We know that   if $Z$ is r.s. then  $\ell_+$ is s.v. at infinity and  it accordingly follows (see \cite{Urenw} or Appendix (B)) that 
\beqn\label{u/ell}
u_{\rm as}(x) \sim 1/\ell_+(x).
\eeqn

In the rest of this subsection we shall suppose
\beqn\label{a/a/Z}
 Z \; \;\mbox{is r.s. \quad and}\quad  \frac{a(-x)}{a(x)} \to 0 \quad (x\to\infty).
\eeqn
By Theorem \ref{Thm6} and Proposition \ref{Prop40} this condition holds if $m_+/m\to\infty$.
The first half of (\ref{a/a/Z}) ensures that   $\ell_+$ is s.v. and (\ref{u/ell}) holds, while the second one  implies that $a(-x)$ is asymptotically increasing according to Lemma \ref{Lem39}, hence   by  (\ref{RL6.2})    as $x\to \infty $
\beqn\label{g/a_0} 
g_{(-\infty,0]}(x,y)/a(x) \to 1 \quad \mbox{uniformly for}  \;\;x\leq y \leq Mx
\eeqn
for each $M>1$ (in fact even with $M\to\infty$ such that  $a(-Mx)/a(x)\to0$).
Recall that $V_{\rm ds}(x)$ is the renewal function of the weakly descending ladder height process.

 %$U_{\rm as}(x) \sim x/\ell^*(x)$,  where $U_{\rm as}$ is renewal function for the strictly ascending ladder process of  $S$.  B

 %Lem45
\begin{lemma}\label{Lem45}  
Suppose (\ref{a/a/Z}) to hold. Then $\ell_+$ varies slowly, and the following hold
\v2
{\rm (i)}  \quad $a(x) \sim V_{{\rm ds}}(x)/\ell_+(x)$\quad as\quad $x\to\infty$; 
 \v2  
{\rm (ii)}\quad  uniformly for $1\leq x\leq y$,  as $y\to\infty$ 
\beqn\label{eqL6.6}
g_{(-\infty,0]}(x,y) \sim V_{\rm ds}(x-1)/\ell_+(y).
\eeqn
\end{lemma}

%If $m_+/m\to 0$, then  (i) entails $x^2/m(x) \asymp V_{\rm ds}(x)U_{\rm as}(x)$ (since $U_{\rm as}(x)\sim x/\ell_+(x)$). %   in view of Theorems \ref{th:2}. 
\v2
\n
\pf Let   $Z$ be r.s. so that $\ell_+$   varies slowly. It is known \cite[Proposition 19.3]{S} that  for $1\leq x\leq y$
\beqn\label{g/vu}
 g_{(-\infty,0]}(x,y) =\sum_{k=1}^x v_{\rm ds}(x-k)u_{\rm as}(y-k),
 \eeqn
where $v_{\rm ds}(x) =V_{\rm ds}(x) - V_{\rm ds}(x-1)$ 
($x\geq 0$) and $v_{\rm ds}(0) =V_{\rm ds}(0)$
and owing to  (\ref{u/ell}) we see
\beqn\label{g/2x}
g_{(-\infty,0]}(x,2x) \sim  V_{\rm ds}(x-1)/\ell_+(x).
\eeqn
If $a(-x)/a(x) \to 0$, by  (\ref{g/a_0}) we also have   $g_{(-\infty,0]}(x,2x)  \sim a(x)$, whence the equivalence relation of (i)  follows.  (ii) follows from (\ref{g/vu}) and the slow variation of $u_{\rm as}$ for $1\leq x<y/2$,
and from (\ref{g/a_0})  in conjunction  with (i)  for $y/2\leq x\leq y$.
\qed 
\v2
Define a function  $f_r$ on $\mathbb{Z}$ by $f_r(x) = \sum_{y=1}^\infty V_\ds(y-1)p(y-x)$  for $x\leq0$ and  
$$f_r(x)=V_{\rm ds}(x-1), \quad x\geq1.$$
Then $f_r(x)=P[\hat Z < x]$ ($x\leq 0$)  (cf. \cite[Eq(2.3)]{Uladd})  and
$f_r(x)$ is a harmonic function of the r.w. killed as it enters $(-\infty, 0]$, in the sense that 
$\sum_{y\geq1} f(y)p(y-x)= f_r(x)$ $  (x\geq1)$. For each $x\in \mathbb{Z},$ $M^x_n := f_r (S^x_{n}) {\bf 1}(n< \sigma^x_{(-\infty,0]})$ is a martingale, so that by optional stopping theorem
$$f_r(x) \geq \liminf_{n\to\infty} E\big[M^x_{n\wedge \sigma_{[R,\infty)}}\big]\geq f_r(R)P\big[\sigma^x_{[R,\infty)} <\sigma^x_{(-\infty,0] }\big]. $$
Hence
\beqn\label{Mn}
f_r(x)/f_r(R) \geq P\big[\sigma^x_{[R,\infty)} <\sigma^x_{(-\infty,0] }\big] \geq P\big[\sigma^x_R <\sigma^x_{(-\infty,0] }\big].
\eeqn
The last probability equals $g_{(-\infty,0]}(x,R)/ g_{(-\infty,0]}(R,R)$ which is  asymptotic to $f_r(x)/f_r(R)$ because of  Lemma \ref{Lem45}(ii) (note that (\ref{eqL6.6}) extends to $x\leq 0$). 
This leads to the following

%Prop46
\begin{proposition}\label{Prop46} If (\ref{a/a/Z}) holds, then uniformly for $ x \leq R$, as $R\to\infty$
\beqn\label{eqP6.20}
 P\big[\sigma^x_R <\sigma^x_{(-\infty,0] }\big] \sim P\big[\sigma^x_{[R,\infty)} <\sigma^x_{(-\infty,0]}\big] \sim  f_r(x)/f_r(R),
\eeqn
and for $x=0$, in particular,  $P\big[\sigma^0_{[R,\infty)} <\sigma^0_{(-\infty,0]}\big] \sim 1/f_r(R)$.
\end{proposition}

 %The particular case  of $x=0$ follows by $f_r(0)=1$ and
%$$\sum_{y=1}^\infty f_r(y)p(y) =1$$ 
% $P[X\geq R]\leq \sum_{y\geq R}  f_r(y)p(y)/ f_r(R)=o(1/f_r(R))$. \qed

%Rem47   
\begin{remark}\label{Rem47} Let  $m_+/m\to0$. By Proposition \ref{Prop5} and  Lemma \ref{Lem45}(i)  it follows that as $x/R\to1$, $V_{\rm ds}(x)/ V_{\rm ds}(R) \to1$ and hence 
 $P[\sigma^x_{[R,\infty)} <\sigma^x_{(-\infty,0]}]\to 1$. [Note that if $U_{\rm as}$ is s.v., then as a dual result we may have  $P[\sigma^x_{[R,\infty)} <\sigma^x_{(-\infty,0]}]\to 0$ even when $x/R\to 1$.] It   holds that for $1\leq x\leq R$, 
$$%P[\sigma^x_{[R,\infty)} < \sigma^x_{(-\infty,0]}] \sim P[\sigma^x_R< \sigma^x_0]
 f_r(x)/f_r(R) \sim  a(x)/a(R)
 $$ 
 as $R\to\infty$
 along with  $a(-R)/a(x) \to 0$ (possible even when $x/R\to0$).
 % in which case $g_{(-\infty,0]}(x,R)/a^\dagger(x) \to 1$ (see  (\ref{RL6.2}) or Lemma \ref{lem6.7} (both (i) and (ii))). 
 Hence the asymptotics of each probability in (\ref{eqP6.20})  agrees with that of $P[\sigma^x_{R} < \sigma^x_0]$  at least under $a(-R)/a(x) \to 0$ because of Lemma \ref{Lem37}, but does not if  $\ell_+(x)/\ell_+(R)\to 0$.
 
 % In contrast to the case $x/R\to1$, the ratio  $V_{\rm ds}(x)/ V_{\rm ds}(R)$ may approach  unity as $x/R\to 0$ in an appropriate way if $ V_{\rm ds}$ varies slowly---as is the case   if $xF(-x)$ is slowly varying (cf. \cite[Lemma 2]{Uexit}).   

% and  for any $\ep >0$,   as $R\to \infty$ 
%$$P[\sigma^x_{[R,\infty)} < \sigma^x_{(-\infty,0]}] 
%[ V_{\rm ds}(x)/V_{\rm ds}(R)]\vee [a(x)/a(R)] \to 0  \quad \mbox{uniformly for $x$ satisfying  $a(-R)/a(x)>\ep$}.$$
\end{remark}

%Rem48
\begin{remark}\label{Rem48} \;   
%Without the condition $E Z \wedge E| \hat Z| <\infty $ 
 There are  some results concerning  the  two sided exit problem. 
 If  the distribution of $ X$ is symmetric and belongs to the domain of normal attraction of a stable law with exponent $0<\alpha\leq 2$,  the problem is investigated by Kesten \cite{K2}: for $1<\alpha\leq2$ he identifies the limit of $P_x[\sigma_{[R,\infty)}< T]$ as $x\wedge R\to\infty$ so that $x/R\to \lambda\in (0,1)$.  For L\' evy processes with no positive jumps   there are certain definite results (cf. \cite[Section 7.1-2]{Bt}, \cite[Section 9.4]{D_L})). 
%It is also noted that  {\it if the walk $S$  is attracted to  a stable process of index $1\leq \alpha \leq 2$ and   $P_0[S_n>0]\to 1/\alpha$, then (\ref{cor1.1_i})  holds}   (see Appendix (C) for the proof), and hence the overshoot estimate  of Lemma \ref{lem1.1}. 
\end{remark}

%Thm3
%\begin{thm}\label{thm3}\, {\rm (i)}\; If either $E Z<\infty$ or $E \hat Z> -\infty$,  then, uniformly for $0<x<R$,
% as $R\to\infty$
%\beqn\label{E1}
%P_x[\sigma_{[R,\infty)}< T]= P_x[\sigma_R< T]\{1+o(1)\}.\eeqn

%{\rm (ii)} If   $m_+(x)/m(x)$ either converges to 0 or  to  $1$ as $x\to\infty$, then  
%\beqn\label{E2}
% P_x[\sigma_R<\sigma_0] = P_x[\sigma_{[R,\infty)}<\sigma_0] \{1+o(1)\}. %= P_x[\sigma_{T}>\sigma_0] \{1+o(1)\}. \eeqn\v2
%{\rm (iii)}\, If the walk is not left-continuous $($i.e. $P[ X\leq -2] >0)$, then for $x>0$, as $R\to\infty$
%$$\frac{P_x[\sigma_R<T\,]}{P_x[\sigma_R<\sigma_0]} = \left\{ \begin{array} {ll} 
%1 - H_{(-\infty,0]}^x\{a\}/a(x)  + o(1) \quad &\mbox{if}\quad EZ<\infty,\\[2mm]
%o(1) \quad &\mbox{if}\quad EZ=\infty. \end{array} \right.$$
%Here  $o(1)$  is uniform for $0<x<R$  if $EZ<\infty$.\end{thm}
   %For the first  of (\ref{E1})  the weaker condition $m_+(x)/m(x)\to 0$ is sufficient (cf.\cite[Proposition 5.2]{Upot}), on which (i) of the next corollary rests on. 
%The first relation of Theorem \ref{thm1.3}(i) provides a solution to the classical two-sided exit problem in case when  $E Z<\infty$ or $E \hat Z> -\infty$.

\vskip2mm
%{\bf 7.4.}
\textbf{ 7.4.} {\sc Proof of Theorem \ref{Thm8}}

If  $m_+/m\to 0$, then 
 it follows \cite[Theorem1.1]{Unote} that for (a) in (\ref{a-e}) to hold, i.e.,  $\lim P[S_n>0] \to 1/\alpha$,  it  is necessary and sufficient that  $1\leq \alpha\leq 2$ and 
\beqn\label{Unote1}
\begin{array}{ll}
\int_{-x}^0t^2dF(t) \sim 2L(x) \quad &\mbox{if}\quad  \alpha =2,\\[1mm]
F(-x)  \sim (\alpha-1)(2-\alpha)x^{-\alpha}L(x)  \quad &\mbox{if}\quad 1<\alpha < 2,\\[1mm]
\int_{-\infty}^{-x} (-t)dF(t)  \sim L(x)  \quad &\mbox{if}\quad  \alpha =1
\end{array}
\eeqn
with a s.v. function  $L(x)$ at infinity. By  standard arguments (\ref{Unote1}) 
is equivalent to 
(b): $m_-(x) \sim  x^{2-\alpha}L(x)  \;\; (1\leq \alpha \leq 2)$. [If  $1<\alpha<2$ this is immediate by the monotone density theorem \cite{BGT}; for $\alpha =1$ observe that  the condition in  (\ref{Unote1}) implies  that $xF(-x)/\int_{-\infty}^{-x} (-t)dF(t) \to 0$ (see e.g., Theorem VIII.9.2 of \cite{F}), and  then that $\eta_-(x)\sim L(x)$---the implications of opposite direction is easy; for $\alpha=2$ see (1) of  Appendix (A).]  Thus (a) and (b) are equivalent.

 In view of Proposition \ref{Prop46} and Lemma \ref{Lem45}(i),  (c) to (e) in (\ref{a-e})  are equivalent to one another provided $m_+/m\to 0$, and for the proof of Theorem \ref{Thm8} it suffices to show the equivalence of (b) and  (d).  % as is noted in Remark \ref{rem_thm5}. 
This equivalence is involved in the following result.

%Prop49
\begin{proposition}\label{Prop49} Let $m_+/m\to 0$. Then 
\v2
{\rm (i)} $V_{\rm ds}(x)U_{\rm as}(x)\sim xa(x) \asymp x^2/m(x)$; and
\v2
{\rm (ii)}  in order that  $m_-$   varies regularly  with index $2-\alpha$ $(\in [0,1])$,  it is necessary and sufficient  that  $V_{\rm ds}$ varies regularly with index $\alpha-1$, in which case
it  holds that
 \beqn\label{d/b}
V_{\rm ds}(x)U_{\rm as}(x) \sim C_\alpha x^2/m(x), 
\eeqn
where $C_\alpha = 1/\Ga(\alpha)\Ga(3-\alpha)$.% and that for $1<\alpha\leq 2$, $v_{\rm ds}(x) \sim (\alpha-1)x^{-1}V_{\rm ds}(x)$. 
 \end{proposition}
 
 % if either $EZ<\infty$ or $F$ belongs to the domain of attraction of a stable law   in addition,
%then   (\ref{d/b}b) may be refined to  $\eta_-(x)\sim   C_\alpha L_1(x)\ell_+(x)$.

Suppose  $m_+/m\to 0$ and  $m_-$ is regularly varying  with index  $\alpha-2$. From Lemma 6.1 of \cite{Uexit} (see also Remark 6.1 of it)  it follows that   for $1<\alpha\leq 2$, $v_{\rm ds}(x) \sim (\alpha-1)x^{-1}V_{\rm ds}(x)$, hence by (\ref{d/b})
$$v_\ds(x) \sim (\alpha-1)C_\alpha \ell_+(x)/m(x).$$

If $1<\alpha\leq 2$ and $F$ is in the domain of attraction of a stable law, (\ref{d/b}) is shown  by Vatutin-Wachtel \cite{VW}; our proof,  quite different from theirs,   rests on Lemma \ref{Lem45}. For the r.w.s attracted to  stable laws  with exponent $0<\alpha<2$ the precise asymptotic form of $V_{\rm ds}(x)U_{\rm as}(x) $ is given in \cite[Lemma 5.1]{Uexit} except in the case when $\alpha= 2p=1$,  $E|X|=\infty$ and $\rho\in \{0,1\}$.

 %that says  $a(x)x \sim V_{\rm ds}(x)U_{\rm as}(x)$ (under $m_+/m\to0$)---the asymptotic form of $a$ given as the last statement   of  Theorem \ref{thm5}, may accordingly  be regarded as another expression of  (\ref{d/b}). 
 
%with exponent  $1$ (the constant $C_\alpha$  depends on the limiting stable law).  
  
 %{\rm (a)} \;\; \quad &\mbox{for}\;\; 
%1<\alpha \leq 2, \\[2mm] {\rm (b)} \;\;  c_1 x^2/m(x) \leq V_{\rm ds}(x)U_{\rm as}(x)\leq \{1+o(1)\}  x^2/m(x) \quad &\mbox{for}\;  \alpha=1, \quad\end{array}\eeqn

% $ m_-(x) \sim C_\alpha x^{2-\alpha}L_1(x)\ell_+(x)$ \quad for\; $1<\alpha <2$,
  The first result  (i) of Proposition \ref{Prop49} follows immediately from Lemma \ref{Lem45}(i)  (since $U_{\rm as}(x)\sim x/\ell_+(x)$).
The proof  of (ii)  will be  based on the identity
  \beqn\label{R_hat_Z}
    P[\hat Z\leq -x] = V_{\rm ds}(0)\sum_{y=0}^\infty u_{\rm as}(y)F(-y-x) \quad (x>0)
  \eeqn
which one can derive from (\ref{g/vu}) (or  the duality lemma that says  $u(y)= g_{(-\infty,0]}(0,y)$, $y\geq 0$ \cite[Section XII.1]{F}).
 Recall  (\ref{u/ell}), i.e.,  $u_{\rm as}(x)\sim 1/\ell_+(x)$ and that $\ell_+$ is s.v. (at least under $m_+/m\to 0$).
 Here it is also noted that for $1\leq \alpha<2$,
  if  either $V_{\rm ds}(x)$ or  $1/P[\hat Z\leq -x]$ varies regularly as $x\to\infty$ with index $\alpha-1$, then  
   \beqn\label{ZV1}
   P[\hat Z\leq -x] V_{\rm ds}(x) \longrightarrow V_{\rm ds}(0)/\Ga(2-\alpha)\Ga(\alpha)
   \eeqn
     (cf. e.g., \cite{BGT}, pp.364-5). %which also holds if $P[\hat Z\leq -x] $ varies regularly as $x\to\infty$ with index $1-\alpha$.
     \v2
 % {\it Proof of the necessity part of Proposition  \ref{prop6.4}. } 
 {\it Proof of the necessity part of Proposition  \ref{Prop49}(ii). } By virtue  of the first inequality of (\ref{Mn})   $V_{\rm ds}$ is s.v.  if and only if  
 $P[\sigma^x_{[R,\infty)} <\sigma^x_{(-\infty,0]}] \to1$ as $R\to\infty$ for  $x\geq R/2$ which is equivalent to  $\lim P[S^0_n>0]=1$ due to Kesten-Maller \cite{KM} as noted in (\ref{A/KM2}). 
 
 For $\alpha=1$, by what is mentioned in the beginning   of this subsection this verifies    the equivalence stated in the first half of the proposition. [Here sufficiency part is also proved, of which we shall give a proof  without resorting to \cite{KM} (cf. Lemma \ref{Lem51}).]

 For $1<\alpha<2$ by (\ref{R_hat_Z}) condition (\ref{Unote1})   implies that
\beqn\label{hatZ/U}
P[\hat Z <-x] \sim (\alpha-1)\kappa\sum_{y=0}^\infty \frac{L(x+y)}{\ell_+(y)(x+y)^\alpha}  \sim \kappa \frac{L(x)}{\ell_+(x)x^{\alpha-1}}
\eeqn
($\kappa=(2-\alpha)V_{\rm ds}(0) $), and hence $V_{\rm ds}$ varies regularly with index $\alpha-1$ because of (\ref{ZV1}).   

Let $\alpha=2$. Then  the slow variation of $m_-$ entails that $F$ is in the domain of attraction of a normal law (cf. Appendix (A)) and we shall see (Proposition \ref{7.1}(i)) that  this entails $2\bar a(x) \sim x/m_(x)$, hence $V_{\rm ds}(x)\sim a(x)\ell_+(x) \sim x\ell_+(x)/m_-(x)$ owing  to Lemma \ref{Lem45}. \qed
\v2
 
In the proof of the sufficiency part of Proposition \ref{Prop49}(ii), we shall be concerned with the condition
\beqn\label{V/L1}
 V_{\rm ds}(x)\sim x^{ \alpha -1}/\hat \ell(x) 
 \eeqn
 with some $\hat \ell $ that is s.v.
at infinity.   The next lemma deals with  the case $1\leq \alpha <2$ when  (\ref{ZV1}) is valid.
   %Under (SC) the  condition (\ref{V/L1}) holds if and only if $\alpha\rho=1$.
 By (\ref{R_hat_Z})  and  (\ref{ZV1})   the above  equivalence relation 
may be rewritten as  
 \beqn\label{uF/L}
\frac{ P[\hat Z\leq -x]}{V_\ds(0)} = \sum_{y=0}^\infty u_{\rm as}(y)F(-x-y) = \frac{x^{1-\alpha}\hat \ell(x)}{ \Ga(\alpha)\Ga(2-\alpha)}\{1+o(1)\}.
   \eeqn

    Put
   $$\ell_\sharp(x) = x^{\alpha-1}\int_x^\infty \frac{F(-t)}{\ell_+(t)}dt.$$

%Lem50
    \begin{lemma}\label{Lem50} If   $Z$ is r.s. and  (\ref{V/L1})  
   holds for $1\leq \alpha<2$, then  %varies regularly of index $1-\alpha$ 
   $$ \hat\ell(x) \sim \Ga(\alpha)\Ga(2-\alpha)\ell_\sharp(x)  \quad\mbox{and}\quad  P[\hat Z\leq -x]/V_{\rm ds}(0) \sim  x^{1-\alpha }\ell_\sharp(x).$$
   \end{lemma}   
   \pf
    %Denote the sum on the LHS by $\Sigma(x)$   and the integral on the RHS by $G(x)$: 
    For this proof we do not need the condition $E|X|<\infty$  nor the recurrence of the r.w., but need the oscillation of the r.w. so that both $Z$ and $\hat Z$ are well-defined to be proper random variables and Spitzer's representation (\ref{g/vu}) of   $g_{(-\infty,0]}(x,y)$ is valid.
    Let $Z$ be r.s. so that  $\ell_+$ is s.v. and $u_{\rm as}(x)\sim 1/\ell_+(x)$.  Put 
    $$\Sigma(x)=  \int_{1}^\infty \frac{F(-t-x)}{\ell_+(t)}dt \quad\mbox{and}\quad  \De(x) = \frac{\ell_\sharp(x)}{x^{\alpha-1}} = \int_x^\infty \frac{F(-t)}{\ell_+(t)}dt.$$
 In view of  (\ref{uF/L})  it suffices to show the implication
    \beqn\label{G/L}
 \Sigma(x) \;\mbox{varies regularly of index} \; 1-\alpha
   %\; \sim\; (2-\alpha) C_\alpha x^{1-\alpha}\hat \ell(x) 
   \,\Longrightarrow \, \Sigma(x)  \sim \De(x).
 \eeqn  
 Clearly  $\Sigma(x)\geq F(-2x)\int_{1}^x dt/\ell_+(t) \sim  xF(-2x)/\ell_+(x)$.  Replacing  $x$ by 
  $x/2$ in this inequality and noting that  $\Sigma( x/2)\sim 2^{\alpha-1}\Sigma(x)$  because of the premise of (\ref{G/L}) 
    we obtain  
  \beqn\label{xF/sigma}
 x F(-x) /\ell_+(x) \leq   2^{\alpha}\,\Sigma(x)\{1+o(1)\}.
\eeqn
From the defining expressions of  $\Sigma$ and $\De$ one observes first  that 
 for each $\ep>0$, as $x\to \infty$ 
$$
  \Sigma\big(x)  \leq \frac{\ep xF(-x)}{\ell_+(x)}\{1+o(1)\} + \De((1+\ep)x),
$$
 then, by substituting  (\ref{xF/sigma}) into the RHS, that  
$$\big[1- 2^\al \ep\big]\Sigma(x)\{1+o(1)\} \leq \De\big((1+\ep)x\big)  \leq \De(x) \leq 
\Sigma(x)\{1+o(1)\}.$$ 
Since  $\ep$ may be arbitrarily small, we can conclude  $\De(x)/\Sigma(x) \to 1$ as desired.  
% $P[\hat Z\leq -x]$  being s.v. as $x\to\infty$ if so is $V_\ds$  (the converse is also true) according to \cite{R}.
\qed
 
  %Since    (\ref{xF/sigma}) also yields   $0\leq G(x)- G\big((1+\ep)x\big) \leq  2^\al \ep \Sigma(x)\{1+o(1)\}$, 
  
   %Lem51
\begin{lemma}\label{Lem51}  {\rm (i)} \, Suppose that (\ref{V/L1}) holds with $\alpha=1$ and $\hat \ell =\ell_\sharp$ and that $Z$ is r.s. Then both $\eta_-$ and $\eta$ are s.v. and 
\beqn\label{eqL6.8}
\ell_+(t) \ell_\sharp(x) = A(t) + o(\eta_+(x)) \;\; \mbox{as}\;\; x\to \infty.
\eeqn
{\rm (ii)} Suppose that  $x\eta_+(x)/m_-(x)\to 0$  and  (\ref{V/L1}) holds with $\alpha=1$. Then  $Z$ is r.s., $\eta_-$ is s.v. 
and $\ell_+(t)\ell_\sharp(t) \sim A(t) \sim \eta_-(t)$.
\end{lemma}
\v2\noindent
\pf  \, (i) is the second half of  Lemma 4.2 in \cite{Uexit}.
As for proof of (ii), suppose $x\eta_+(x)/m_-(x)\to 0$. Then    $Z$ is r.s. by Proposition \ref{Prop40}.    Let  $V_{\rm ds}(x)\sim 1/\hat \ell(x)$. Then   by Lemma \ref{Lem50} 
% \beqn\label{LB} 
 $ \hat \ell(x) \sim \ell_\sharp(x)$  %\leq \eta_-(x)/\ell_+(x)
 and (ii) follows from (i).
 \qed
%which,  showing  the slow variation of $\ell_\sharp$, implies  $xF(-x)/\ell_+(x) = o(\ell_\sharp(x))$. Since $\ell_\sharp(x)\leq \eta_-(x)/\ell_+(x)$ (see (\ref{G/J}) below), this entails    $xF(-x) =o(\eta_-(x))$ which is   equivalent to  the slow variation of $\eta_-$.

 \v2
%Completion
 {\it Completion of the proof of  Proposition \ref{Prop49}. }\;
 We  show that if  $V_{\rm ds}(x)$ varies regularly, then (\ref{Unote1}) and (\ref{d/b}) hold.
 %The statement concerning $v_{\rm ds}$ follows from Lemma 4.5 of \cite{Uexit} (see also  Remark 4.1  of it) for $1\leq \alpha<2$. The case $\alpha=2$ is dealt with at the end of this sub section.
 As in (\ref{V/L1}) 
 let 
 $$V_{\rm ds}(x)\sim x^{\alpha-1}/\hat \ell(x)$$
  with a s.v.  $\hat \ell$ and $1\leq \alpha\leq 2$.

 \v2

 {\sc Case\,  $1\leq\alpha <2$}. Let  $C_\alpha' =1/[\Ga(\alpha)\Ga(2-\alpha)]=(2-\alpha)C_\alpha$. 
  By (\ref{R_hat_Z}), (\ref{ZV1}) and Lemma \ref{Lem50}
 \beqn\label{V/F/ell}
\frac{C_\al'\hat\ell(x)}{x^{\alpha-1}} \sim \frac{C_\al'}{V_{\rm ds}(x)} \sim  \frac{P[\hat  Z <-x] }{V_\ds(0)}\sim \frac{\ell_\sharp(x)}{\, x^{\alpha-1}}
 = \int_x^\infty \frac{F(-t)}{\ell_+(t)}dt.
 \eeqn

Let $1<\alpha < 2$. Then  
by the monotone density theorem---applicable since   $F(-t)/\ell_+(t)$ is decreasing---the above relation   leads to $F(-x) \sim \big[(\alpha-1)C_\alpha'\big] x^{-\alpha}\hat \ell(x)\ell_+(x)$ which is equivalent to (\ref{Unote1}) with $L(x)= C_\al \hat \ell(x)\ell_+(x)$ and shows
 (\ref{d/b}) again (that has been virtually seen at (\ref{hatZ/U})).

 If  $\alpha =1$. Then $U_{\rm as}(x)\sim 1/ \ell_+(x)$ and,  by (\ref{V/F/ell}) and Lemma  \ref{Lem51}(ii),  $1/V_{\rm ds}(x)\sim \eta_-(x)/\ell_+(x)$  which  is equivalent to  
 (\ref{Unote1}) and implies    (\ref{d/b}) as one can readily check.
       \qed

 % $V_{\rm ds}(x)\sim 1/L_1(x)$ and   by Lemma \ref{lem6.8} and (\ref{G/J})
 %\beqn\label{LB} 
 % L_1(x) \sim G(x): =\int_x^\infty\frac{F(-t)}{\ell_+(t)}dt  \leq \eta_-(x)/\ell_+(x) \eeqn

    %In  case $EZ<\infty$  we have $\lim u_{\rm as}(x) = \lim 1/\ell_+(x) =1/EZ$, hence the last  assertion of the proposition.  For the other case of it we have $2\bar a(x) \sim x/m(x)$ (cf. Proposition \ref{prop6.1}) and the result follows in view of (\ref{V/a/m}).

%We first obtain the lower bound 
%\beqn\label{LB_e/ell}
%\int_x^\infty\frac{\mu_-(t)}{\ell_+(t)}dt\geq  c  \eta_-(x)/\ell_+(x)  \quad (c>0).
%\eeqn
  % so that $\eta_-(x)\leq C' \ell_+(x)/V_{\rm ds}(x) \sim C'L_1(x)\ell_+(x)$,   showing the upper bound in  (\ref{d/b}b). 

\v2
%Case $\alpha=2$
   {\sc   Case\, $\alpha =2$}.  We have 
    \beqn\label{uF_L}
\frac{\ell_-(x)}{V_{\rm ds}(0)} =\frac1{V_\ds(0)}\int_0^xP[-\hat Z>t]dt 
=  \int_0^x  dt   \sum_{y=0}^\infty u_{\rm as}(y)F(-y-t) 
   \eeqn
and, instead of % (\ref{uF/L}) and 
 (\ref{ZV1}),   $\ell_-(x)V_{\rm ds}(x)/V_{\rm ds}(0)\sim x$ \cite[Eq(8.6.6)]{BGT},  so that 
      \beqn\label{x/V/L}
   \ell_-(x)/V_{\rm ds}(0)\sim x/V_{\rm ds}(x) \sim \hat \ell(x).
   \eeqn

  %according to Proposition \ref{prop6.1} as well as   Lemma \ref{lem6.7}. Since $\ell_-$ is s.v., we have     $v_{\rm ds}(x) \sim V_{\rm ds}(0) /\ell_-(x) \sim V_{\rm ds}(x)/x$ by  (\ref{x/V/L}) again. 

 To complete the proof it suffices to show  that $m_-$ is  s.v.,  or equivalently,  $x\eta_-(x)/ m(x)\to 0$ (because of the assumption  $m_-\sim m$), because
     the slow  variation of $m$ entails that of $\int_{-x}^x y^2 dF(y)$ and hence   (\ref{d/b}) (see the proof of the necessity part).
 We shall verify the latter condition and to this end  we shall 
 apply the inequalities
 \beqn\label{m/ell}
 1/C \leq \ell_-(x) \ell_+(x)/ m(x) \leq C   \qquad (x>1)
 \eeqn
 (for  some positive constant  $C$), which  follow from (\ref{x/V/L}) and Lemma \ref{Lem45}(i), the latter entailing  $V_{\rm ds}(x)/\ell_+(x)\asymp x/m(x)$ in view of Theorems \ref{th:1} and  \ref{th:1_2}. % (see (\ref{V/a/m})).
 % by  Lemma \ref{lem5.7}(i) and  the relations  $a(x)\sim 2\bar a(x) \asymp x/m(x)$.    
  
  Using (\ref{uF_L}) and $u_{\rm as}(x)\sim 1/\ell_+(x)$ as well as the monotonicity of 
    $\ell_+$ and $F$  one observes that for each $M>1$ as $x\to\infty$
  \beq
  \frac{\ell_-(x)-\ell_-(x/2)}{V_{\rm ds}(0)} &\geq & \int_{x/2}^{x} dt\int_0^{(M-1)x} \frac{F(-y-t)}{\ell_+(y)\vee 1}\{1+o(1)\}dy\\ %+o\big(\ell_-(x)\big) 
  &\geq& \int_{x/2}^{x} dt \int_0^{(M-1)x}\frac{F(-y-t)}{\ell_+(y+x)}dy \{1+o(1)\}
   \\
  &\geq&\frac{x/2}{\ell_+(x)}\big[\eta_-(x) - \eta_-(Mx)\big]\{1+o(1)\},
  \eeq
 which implies
$$x\eta_-(x)\leq x\eta_-(Mx) + \ell_-(x)\ell_+(x)\times o(1),$$
for  by (\ref{x/V/L})  $\ell_-$ is s.v. 
Combining this with  (\ref{m/ell}) one  infers that
$$   x\eta_-(x)  \leq \frac{m(Mx)}{M}\{1+o(1)\} \leq \frac{C [\ell_-\ell_+](x)}{M}\{1+o(1)\} \leq \frac{C^2m(x)}{M} \{1+o(1)\}$$
% which together with  the upper bound in (\ref{m/ell})  yields $\limsup x\eta_-(x)/m(x) \leq C^2/M$.
and  concludes that $x\eta_-(x)/m(x) \to 0$, hence  the desired slow variation of $m$. 
\qed
\vskip3mm

%{\bf 7.5.}
\textbf{ 7.5.} {\sc  Comparison between  $\sigma^x_R$ and $\sigma^x_{[R,\infty)}$ and one-sided escape from zero.
}
\vskip2mm
The results obtained in this subsection  are thought of as examining to what extent the results which are simple for the right-continuous r.w.s to those satisfying  $m_+/m \to 0$. 
The following one gives the asymptotic form in terms of $a$ of the  probability  of one-sided escape of the r.w. killed as it hits  $0$. 
%Prop52
\begin{proposition}\label{Prop52}\,{\rm (i)}  If $ \lim_{z\to\infty}a(-z)/a(z) = 0$, then  as $R\to\infty$
\beqn\label{eqP6.4}
P[\sigma^x_{(-\infty, -R]}<\sigma^x_0]\,\sim\, P[\sigma^x_{-R}<\sigma^x_0]  \quad\mbox{uniformly for $\;x\in \mathbb{Z}$.
}
\eeqn

{\rm (ii)}\,  Suppose $ m_+/m \to 0$. Then  $\lim_{z\to\infty} a(-z)/a(z) = 0$ and as $R\to\infty$
\beqn\label{eqP6.2}
P[\sigma^x_{[R,\infty)}<\sigma^x_0]\sim P[\sigma^x_R<\sigma^x_0] \sim \frac{a^\dagger(x)}{a(R)}
\eeqn
 uniformly for $x\leq R$ subject to the condition
\beqn\label{ass1}
\frac{a(-R) -a(-R+x)}{a(x)} \, \longrightarrow \, 0 \qquad (x\leq R).
 \eeqn
  \end{proposition}

Condition (\ref{ass1}) holds at least  for $0\leq x<R$ under $m_+/m\to\infty$ due to   (\ref{eL6.2}) which may read
\beqn\label{L6.5ii}
- \frac{a(-R+x)a(-x)}{a(R-x)a(x)} \leq \frac{ a(-R)-a(-R+x)}{a(x)} \leq  \frac{a(-R)}{a(R)}.
\eeqn
(See Remark \ref{rem7.3} at the end of Section 8.1.2 for more about  the relation of (\ref{ass1}) to (\ref{eqP6.2}).) 
\vskip2mm
\begin{proof}   
%For $B\subset \mathbb{R}$ 
Let  ${\cal E}_{R}^x$ stand for the event $\{\sigma^x_{[R,\infty)}<\sigma^x_0\}$. Note that for each $x$,
 \beqn\label{Equiv}
P[\sigma^x_{[R,\infty)}<\sigma^x_0]\sim P[\sigma^x_R<\sigma^x_0] \Longleftrightarrow   \lim_{R\to\infty} P\big[ \sigma^x_{R}<\sigma^x_0\,\big|\, {\cal E}_R^x\big] =1.
 \eeqn 

 %(i) is the same as $P[\sigma^x_{-R}<\sigma^x_0] /P[\sigma^x_{(-\infty, -R]}<\sigma^x_0] \to 1$. 
 
 (i) is immediate fro Lemma \ref{Lem37}(i).
 % We prove the dual of (i),   which   may read  
%\beqn \label{R/q}
% \lim_{R\to\infty} \, \inf_{x\in \mathbb{Z}} P\big[\sigma^x_R<\sigma^x_0\,\big|\, {\cal E}_R^x\big] =1 \quad \mbox{if}\;\;  \lim_{z\to\infty}a(z) /a(-z)= 0.
%\eeqn
%Make  the decomposition
% \[
%  P\big[\sigma^x_R<\sigma^x_0\,\big|\, {\cal E}_R^x\big] =\sum_{z\geq R} P\big[S^x_{\sigma [R,\infty)} =z\,\big|\, {\cal E}_R^x\big] P[\tilde \sigma^z_R<\sigma^z_0],  \]
   %where $ \tilde\sigma^z_R$ is defined to be  zero if $z=R$ and agrees with  $\sigma^z_R$ otherwise.
 %By the dual of Lemma  \ref{lem6.2}(i) $\inf_{z\geq R}P[\tilde\sigma^z_R<\sigma^z_0] \to 1$ as $R\to\infty$ under $ \lim_{z\to\infty}a(z) /a(-z)= 0$ and we conclude (\ref{R/q}).   

For the proof of (ii) let $m_+/m\to 0$.  By (\ref{eq:L6.1}) condition (\ref{ass1}) is then  equivalent to  the second equivalence of (\ref{eqP6.2}) under the supposition.  For the proof of the first,  
take a small constant  $\ep>0$. Then  for any $R$ large enough we can choose $u>0$ so that 
 \[
 u/m(u) = \ep R/m(R);
 \]
put $z=z(R,\ep) =\lfloor u\rfloor$. Since $\bar a(x)\asymp x/m(x)$, this entails
 \[
 \ep C'\leq \bar a(z) / \bar a(R) < \ep C''
 \]
for  some  positive constants $C'$, $C''$. Now define  
  $h_{R,\ep}$ via 
 \[
 P[\sigma^x_R<\sigma^x_0\,|\, {\cal E}_R^x] = h_{R,\ep} +  \sum_{R \leq y\leq R+z} P\big[S^x_{\sigma[R,\infty)} = y \,\big|\, {\cal E}_R^x\big] P[ \tilde \sigma^{y}_R<\sigma^{y}_0]
 \]
 with  $\tilde \sigma_R^y$ defined as in the proof of (i). 
 Then by Lemma \ref{Lem42}(i)  (see also (\ref{AB}))
 \[
  h_{R,\ep}\leq P\big[S^x_{\sigma[R,\infty)} >R+z \,\big|\, {\cal E}_R^x\big] \leq C\frac{m_+(z)}{m(z)}\cdot\frac{\bar a(R)}{\bar a(z)} \leq [C/\ep C'] \frac{m_+(z)}{m(z)},
 \]
 whereas 
$1-P[\tilde\sigma^{y}_R<\sigma^{y}_0]  % [ {a(y-R) +a(R)- a(y)}]/2{\bar a(R)} 
 \leq  {\bar a(y-R)}/{\bar a(R)} <\ep C_1 $ for $R\leq y\leq R+z$ (see Remark \ref{Rem36}(ii) for the first inequality).
Since $m_+(z)/m(z)\to 0$ so that $h_{R,\ep}\to 0$, we conclude 
 \[
  \liminf_{R\to\infty}\inf_{0\leq x<R}  P\big[\sigma^x_R<\sigma^x_0\,\big|\, \sigma^x_{[R,\infty)}<\sigma^x_0\big] >1-\ep C_1,
  \]
 hence $ P[\sigma^x_R<\sigma^x_0\,|\, \sigma^x_{[R,\infty)}<\sigma^x_0] \to 1$, for $\ep$ can  be made arbitrarily small.  This verifies  that the first equivalence in (\ref{eqP6.2}) holds uniformly for $0\leq x\leq R$. 
 
Letting $x=0$ in (\ref{eqP6.2}) gives that as $R\to\infty$
\[
P[\sigma^0_{[R,\infty)} <\sigma^0_0] \sim 1/a(R),
\]
which shows the asymptotic monotonicity of $a(x)$,   the probability on the LHS being a monotone function of  $R$.  With the help of the following lemma and the trivial inequality $P[\sigma^x_{[R,\infty)} <\sigma^x_0] \geq P[\sigma^x_R <\sigma^x_0]$ 
 as well as the second equivalence in (\ref{eqP6.2}) this shows 
 that the first one of  (\ref{eqP6.2}) holds   for $x<R$ subject to (\ref{ass1}). 
 \end{proof}
%The second one  follows from Lemma \ref{lem6.2}(ii). 

 %Rem5.3
%\begin{remark}\label{rem5.3}\, If  $\lim_{\ep \downarrow 0}\liminf_{R\to\infty} P[\sigma^{\lfloor R+\ep R\rfloor}_R<\sigma^{\lfloor R+\ep R\rfloor}_0] =1$  (which may be possibly  violated in the present setting), then the proof
%of (\ref{eqP5.2}) can be much simplified by the use of  the second relation of Lemma \ref{lem5.3} instead of the first. ($\lfloor t \rfloor$ indicates the integer part of $t$.)
%\end{remark}

   %Lem53
\begin{lemma}\label{Lem53}\,  If $a(x)$ is asymptotically increasing as $x\to\infty$,  then as $R\to\infty$
 $$P[\sigma^x_{[R,\infty)} <\sigma^x_0] \leq  \big[a^\dagger(x)\big/a(R)\big]\{1+o(1)\} 
  \quad \mbox{uniformly for}\;\;  x \in\mathbb{Z}.
  $$
% \vskip2mm
% {\rm (ii)}  \quad $P[\sigma^x_{[R,\infty)} <\sigma^x_0] \sim P[\sigma^x_R <\sigma^x_0] \sim  a^\dagger(x)/a(R)$ \quad \lq uniformly'  for $x<0$ subject to condition (\ref{ass1}) as well as uniformly for  $0\leq x<R$.
\end{lemma}
\begin{proof}  \,  Since $a(S^x_{n})$ is a martingale (if $x\neq 0$),     the optional stopping theorem with the help of   Fatou's lemma shows
\[
E\big[a(S^x_{\sigma_{[R,\infty)}})\big] \leq a^\dagger(x).
\]
[Here in fact the equality holds  if $E|\hat Z|=\infty$ (cf. \cite[Eq(2.8)]{Uladd}).] The expectation on the LHS is bounded below 
by $a(R)P[\sigma^x_{[R,\infty)}<\sigma^x_0]\{1+o(1)\}$ by the assumed monotonicity of $a(x)$, hence the asserted inequality  follows.
%Under the constraint (\ref{ass1})  the  second  relation in (ii) holds owing to (\ref{eq:L5.1}), and  (ii) then follows from (i) in view of the trivial inequality $P[\sigma^x_{[R,\infty)} <\sigma^x_0] \geq P[\sigma^x_R <\sigma^x_0]$. 
  \end{proof} 

%By 
% \beqn\label{lem6.5(ii)} P[\sigma^x_{[R,\infty)} <\sigma^x_0]  \geq P[\sigma^x_R <\sigma^x_0]\; =\;  \frac{a^\dagger(x)}{2\bar a(R)}\bigg[1+ \frac{a(-R) - a(x-R)}{a(x)}\bigg],
% \eeqn
 %Lemma \ref{lem6.10}  entailing that  (\ref{eqP6.2}) holds   for $x<R$ subject to (\ref{ass1}).
%\beqn\label{L6.91}
%P[\sigma^x_{[R,\infty)} <\sigma^x_0] \sim P[\sigma^x_R <\sigma^x_0] \sim  a^\dagger(x)/a(R)\eeqn

We have seen the asymptotic monotonicity of $a(x)$.  This together with Lemma \ref{Lem39} yields
%Cor54
\begin{corollary}\label{Cor54}\, If $m_+/m \to 0$, 
then both $a(x)$ and $a(-x)$ are asymptotically increasing. %in the sense that there exists an increasing function $f(x)$ such that $\bar a(x) =f(x)\{1+o(1)\}$ ($x\to\infty$).
\end{corollary}

% Corollary \ref{cor6.3} is used  to extend the range of validity of (\ref{eqP6.2})   to negative values of $x$ to some extent as given in (\ref{ass1}) below. 

%From the upper and lower bounds of Theorems \ref{th:1} and \ref{th:1_2}  it follows that
%\beqn\label{x/R1}
%\bar a(x)/\bar a(R) \asymp m(R)x/m(x)R \quad \mbox{uniformly for}  \quad 1<x <R.
%\eeqn

In the next subsection we shall state another corollary---Corollary \ref{Cor59}---of Proposition \ref{Prop52} that concerns the asymptotic distribution of $\sharp \{n<\sigma_{[R,\infty)}: S_n\in I\}$, the number of visits of a finite set $I\subset \mathbb{Z}$ by the r.w. before entering  $[R,\infty)$.

% {prop6.4}
 %Rem5.4
 %\begin{remark}\label{rem5.4}\, 
%The second inequality of  (\ref{eL5.2})  shows   $\inf_{ x<R}[a(-R+x)  - a(-R)]/a^\dagger(x) \to 0$ so that  the lower bound for (\ref{ass1})  is always valid while the first one  provides merely  the lower bound   whose estimation necessitates  additional information on $a(x), x<0$.  
% Indeed the extent of $x$ for  which (\ref{ass1}) holds depends on $F$; if $a(-z)$ is bounded  for $z<0$ it holds for $x\asymp R$, otherwise it does not hold if $-x \asymp R$ in most cases of $F$ 
% though  it does for large class   of $F$ \end{remark})

 \vskip2mm
 %{\bf 7.6.} 
\textbf{ 7.6.} {\sc Escape into $(-\infty,-R]\cup [R,\infty)$.}
\vskip2mm
Let $Q$ as well as $R$ be a positive integer.  Here  we consider the event 
$\sigma^x_{\mathbb{Z} \setminus (-Q,R)}< \sigma^x_0$,  the escape  into $\mathbb{Z} \setminus (-Q,R)=(-\infty, -Q]\cup [R,\infty)$   from the killing at 0. The next result is  essentially a corollary of Proposition \ref{Prop52} and Lemma \ref{Lem53}.
For simplicity we write $\sigma^x_{y, z}$ for $\sigma^x_{\{y,z\}}$ so that 
$$\sigma^x_{y, z} =\sigma^x_y\wedge \sigma^x_z.$$ 
%Prop55
\begin{proposition}\label{Prop55} \, If  $m_+/m\to 0$,  then  uniformly for $x\in (-Q,R)$ subject to  (\ref{ass1}), as $Q\wedge R \to\infty$
  \beqn\label{cons1}
 P[\sigma^x_{\mathbb{Z} \setminus (-Q,R)}  <\sigma_0^x] = P[\sigma^x_{ -Q,R}  <\sigma_0^x]\{1+o(1)\}.
 \eeqn
 \end{proposition}
\begin{proof}  \, Put $\tau^x_-=\sigma^x_{(-\infty, -Q]}$ and $\tau^x_+= \sigma^x_{ [R,\infty)}$. It  suffices to show that
\[
\frac{ P[\tau^x_-\wedge \tau^x_+  <\sigma^x_0 < \sigma^x_{-Q,R} ]}
{P[\tau^x_-\wedge \tau^x_+ <\sigma_0^x]} \to 0.
\]
The numerator of the ratio above is less than
 \[
 P[\tau^x_- <\sigma^x_0 < \sigma^x_{-Q} ]+ P[ \tau^x_+ <\sigma^x_0 < \sigma^x_{R} ] = P[\tau^x_- < \sigma_0^x]\times o(1) + P[\tau^x_+< \sigma_0^x]\times o(1)
 \]
 under (\ref{ass1}), where Proposition \ref{Prop52}---both (i) and (ii)---is applied for the bound of 
  the LHS.  Hence it is  of smaller order of magnitude than the denominator.  
 \end{proof}  
 
 For any  subset $B$ of $\mathbb{R}$ such that $B\cap \mathbb{Z}$ is non-empty,  define 
 \beqn\label{def_H}
 H^x_B(y)= P[S^x_{\sigma_{B} } =y ]\quad \;\; (y\in B),
 \eeqn
  the hitting distribution of $B$ for the r.w. $S^x$.
 Put
  \[
  B(Q,R)=\{-Q,0, R\}.
  \] 
 %$$  H^x_{B(Q,R)}(y) =P[S^x_{\sigma_{B(Q,R) } } =y ].$$
 %Combining Proposition \ref{prop5.2} and (\ref{cons1}) 
 %we readily deduce that  
 %if $m_+(x)/m(x) \to 0$ and (\ref^*) holds, then uniformly for $0\leq x <R$, as $R\to\infty$
 %$$P[\sigma^x_{(-\infty, -R]\cup [R,\infty)} < \sigma_0] = 
 %$$P[\sigma^x_{[R,\infty)} <\sigma^x_0] = P[\sigma_R<\sigma_0<\sigma_{-R}]  + ,
 %  <\sigma^x_{(-\infty, -R]} < \sigma_0] \sim P[\sigma_R<\sigma_0], $$
   Then (\ref{cons1}) is rephrased as
 \beqn\label{esc2}
 P[\sigma^x_{\mathbb{Z} \setminus (-Q,R)} < \sigma_0] \sim  1- H^x_{B(Q,R)}(0).
 \eeqn
 By using Theorem 30.2 of Spitzer \cite{S}  one can compute an explicit expression of $H^x_{B(Q,R)}(0)$ in terms of $a(\cdot)$, which though useful  is pretty complicated.  The following result  is derived without using it.

%Lem56
\begin{lemma}\label{Lem56}   For $x\in \mathbb{Z}$
\beq
  P[\sigma^x_{-Q} <\sigma_0^x] 
&\geq&  
1-H^x_{B(Q,R)}(0) - P[\sigma^x_R <\sigma_0^x]P[\sigma^R_{0} <\sigma^R_{-Q}]  \\
&\geq&    P[\sigma^x_{-Q} <\sigma_0^x] P[\sigma^{-Q}_{0} <\sigma^{-Q}_{R}].  
\eeq
\end{lemma}
\begin{proof}  \, Write   $B$ for $B(Q,R)$.  
 Plainly we have
\beqn\label{eP6.3}
1-H^x_B(0) = P[\sigma^x_{-Q,R} <\sigma^x_0] = P[ \sigma^x_{-Q} <\sigma^x_0] + P[\sigma^x_R <\sigma^x_0] - P[\sigma^x_{-Q} \vee\sigma^x_R  < \sigma^x_{0}].
\eeqn
Let ${\cal E}^x$ denote  $\{\sigma^x_R<\sigma^x_0\}\cap\{S^x_{\sigma_R + \,\cdot}$ hits $-Q$  before $0$\},  the event that   $S^x_\cdot$  visits $-Q$ after the first visit of $R$ without visiting  $0$.
 Then
\[
 {\cal E}^x \subset \{\sigma^x_R\vee \sigma^x_{-Q} <\sigma^x_0\}\quad \mbox{and}
\quad \{\sigma^x_{-Q} \vee\sigma^x_R <\sigma^x_0\} \setminus {\cal E}^x \subset \{\sigma^x_{-Q}< \sigma^x_{R} <\sigma^x_0\}.
\]
 By  $P[\sigma^x_{-Q}< \sigma^x_{R} <\sigma^x_0] \leq P[\sigma^x_{-Q} <\sigma^x_0]  P[\sigma^{-Q}_{R} <\sigma^{-Q}_0]$ it therefore follows that
\beqn\label{eP6.31}
0\leq P[\sigma^x_{-Q} \vee\sigma^x_R < \sigma^x_{0}] -P({\cal E}^x) \leq   P[\sigma^x_{-Q} <\sigma^x_0]P[\sigma^{-Q}_{R} <\sigma^{-Q}_0].
\eeqn
The left-hand inequality together with  $P({\cal E}^x) = P[\sigma^x_R  < \sigma^x_{0}] P[\sigma^R_{-Q} < \sigma^R_{0}] $ 
leads to 
\[
P[\sigma^x_R <\sigma^x_0] - P[\sigma^x_{-Q} \vee\sigma^x_R  < \sigma^x_{0}] \leq P[\sigma^x_R <\sigma^x_0] 
- P[{\cal E}^x] = P[\sigma^x_R <\sigma_0^x]P[\sigma^R_{0} <\sigma^R_{-Q}], 
\]
which,  
substituted into (\ref{eP6.3}), yields the first inequality of the lemma after  trite transposition of a term. In a similar way  the second one is deduced by substituting the right-hand inequality of (\ref{eP6.31})  into (\ref{eP6.3}). 
%\[1-H^x_B(0) = P[ \sigma^x_{-Q} <\sigma^x_0] + P[\sigma^x_R <\sigma^x_0] - P[\sigma^x_R \vee\sigma^x_{-Q} < \sigma^x_{0}].\]
\end{proof}

According to  Lemma 3.10 of \cite{Uladd}  %Theorem 30.2 of Spitzer \cite{S} 
for any finite set $B\subset \mathbb{Z}$ that contains  $0$ we have the identity
\beqn\label{HB}
1-H^x_B(0) = [1-H^0_B(0)]a^\dagger(x) +\sum_{z\in B\setminus\{0\}}[a(-z)-a(x-z)]H^z_B(0), \quad x\notin B\setminus \{0\}.
  \eeqn
 % (cf. \cite[Eq(3.38)]{Uladd}). 
 %on putting $x=y=0$   this becomes  \[1= u_{B}(0) +   a(-R) H^{R}_{B}(0) +  a(Q)H^{-Q}_{B}(0)\]
%\begin{eqnarray}\label{HB}
%1-H^x_{B}(0)  &=&   \big[a(-R)-a(x-R)\big] H^{R}_{B}(0) +  \big[a(Q)-a(x+Q)\big] H^{-Q}_{B}(0) \nonumber\\
%&&\; + \, a^\dagger (x)(1- H^{0}_{B}(0)).
%\end{eqnarray}
Recalling  $P[\sigma^x_0 <\sigma_x^x] = 1/2\bar a(x) =P[\sigma^0_x <\sigma_0^0] $ ($x\neq0$) we see that  $H^{R}_{B(Q,R)}(0) \vee H^{-Q}_{B(Q,R)}(0) \leq P[\sigma^{0}_{R}<\sigma_{0}^{0}] \vee P[\sigma^{0}_{-Q}<\sigma_{0}^{0}]  
\leq 1- H^{0}_{B(Q,R)}(0)$.  
 Hence for  $B=B(Q,R)$, the second term on the RHS of (\ref{HB}) is negligible in comparison to the first  under the condition  
 \beqn\label{cnstr}
|a(Q) - a(x+Q)|  + |a(-R) -a(x-R)| =o(a^\dagger(x)),
\eeqn
 which  concludes the following result.
%Lem57
\begin{lemma}\label{Lem57}  Uniformly for $-Q<x<R$ subject to (\ref{cnstr}), 
as $Q\wedge R  \to\infty$  
\beqn\label{eP0} 1-H^x_{B(Q,R)}(0) \sim a^\dagger(x)(1-H^0_{B(Q,R)}(0) ).
\eeqn 
\end{lemma}

Condition (\ref{cnstr}) (to be understood to entail $a^\dagger(x)\neq0$)---always satisfied for each $x$  (fixed)  with $a^\dagger(x)\neq0$---is  necessary and sufficient for   the following  condition  to hold: 
\beqn\label{eq:6.3}
P[\sigma^x_{-Q} <\sigma^x_0]\sim a^\dagger(x)/2\bar a(Q)\quad\mbox{and} \quad  P[\sigma^x_{R} <\sigma^x_0] \sim  a^\dagger(x)/2\bar a(R).
\eeqn
%Prop58
\begin{proposition}\label{Prop58} Suppose  $m_+/m \to 0$. Then  
uniformly for $-Q < x<R$ subject to  condition (\ref{cnstr}), 
as $Q\wedge R \to\infty$  
\beqn\label{eP} 
 P[\sigma^x_{\mathbb{Z} \setminus (-Q,R)} < \sigma^x_0] \sim a^\dagger(x)(1-H^0_{B(Q,R)}(0) )  \sim a^\dagger(x)\frac{a(Q+R)}{a(Q)a(R)}. 
 \eeqn
 \end{proposition}

%Condition (\ref{constr}), a constraint for $x$---satisfied if $x$ is fixed---is  necessary and sufficient for   
%$$P[\sigma^x_{(-\infty,-R]} <\sigma^x_0]\sim P[\sigma^x_{[R, \infty)} <\sigma^x_0] \sim a^\dagger(x)/a(R).$$  

%Proof
\begin{proof} \,On  taking $x=0$ in  Lemma \ref{Lem56} its second   inequality yields  
\beq
1-H^0_{B(Q,R)}(0) &\geq & \frac{a(Q+R) +a(-Q) -a(R)}{4\bar a(R)\bar a(Q)} + \frac{a(-Q-R) +a(R) -a(-Q)}{4\bar a(Q)\bar a(R) }\\
& =& \frac{a(Q+R) +a(-Q-R)}{4\bar a(Q)\bar a(R)} \\
&=& \frac{a(Q+R)}{a(Q)a(R)}\{1+o(1)\}, 
\eeq
and the first  inequality  gives  the upper bound  
\[
1-H^0_{B(Q,R)}(0) \leq  \frac{a(Q+R) +a(-Q)+ a(-R)}{4\bar a(Q)\bar a(R)} =  \frac{a(Q+R)}{a(Q)a(R)}\{1+o(1)\},
\]
showing the second relation of (\ref{eP}).
Proposition \ref{Prop55} in conjunction  with Lemma \ref{Lem57}  verifies the first relation of (\ref{eP}).  
 \end{proof}

 It is shown by Kesten  \cite[Lemma 1]{K1} that    for any finite subset $I\subset \mathbb{Z}$ and $t>0$
\beqn\label{KL1}
\begin{array}{ll}
(a)\;\; P\big[ \sharp \{n:n<\sigma^0_{[R,\infty)}: S^0_n\in I\} \geq (\sharp I) q_R t\big] \to  e^{-t}\quad (R\to\infty), \\[2mm]
(b)\;\; P\big[ \sharp \{n:n<\sigma^0_{\mathbb{Z} \setminus (-Q,R)}: S^0_n\in I\} \geq (\sharp I) q_{Q,R}t\big] \to  e^{-t}\quad (R\wedge Q\to\infty)
\end{array}
\eeqn
with    $q_R = P[\sigma^0_{[R,\infty)} <\sigma^0_0]$ in (a) and $q_{Q,R} =P[\sigma^0_{\mathbb{Z} \setminus (-Q,R)} <\sigma^0_0]$ in (b)
 (valid  for any recurrent r.w. that is irreducible on $\mathbb{Z}$).  Here $\sharp$ designates the  cardinality of a  set.  
By Propositions \ref{Prop52}(ii), \ref{Prop55} and \ref{Prop58}  we have
%Cor59
\begin{corollary}\label{Cor59}\,  Suppose $m_+/m \to 0$.  Then (\ref{KL1}) holds with 
$q_R=1/a(R)$ in (a) and  $q_{Q,R}= a(Q+R)/a(Q)a(R)$ in (b).
\end{corollary}
%\pf\, The first formula with   $a(R)$  replaced by  $P[\sigma_[R,\infty)<\sigma_0]$ is a particular case of  Lemma 1 of \cite{K1}. Thus it 

\vskip2mm

%R5.4
%\begin{remark}\label{rem5.5}\,
 %For the proof of Proposition \ref{prop5.5} one could avoid the use of (\ref{HB}): (\ref{eP}) can be deduced directly from   Lemma \ref{lem5.6} with the help of  what is remarked just after the statement of the proposition.
 %If $C^{-1}< m_+(x)/m_-(x) <C$ for some $C>0$ (and $\sigma^2=\infty$),  (\ref{cons1})  would fail to be true   (for each $x$) for a large class of $F$ (see Remark \ref{rem6.2}). 
%\end{remark}

%The following condition is relevant in the next result: as $Q\wedge R\to \infty$
%\beqn\label{eq:cnstr3}%a(-Q)a(-R)/[a(Q)]^2 \to 0.\eeqn

%Prop60
\begin{proposition} \label{Prop60}\, Suppose $m_+/m\to 0$. Then uniformly 
for $-Q<x<R$ subject to  (\ref{cnstr}),  as $Q\wedge R\to\infty$
\vskip2mm
 {\rm (i)} \,    $H^x_{B(Q,R)}(R) \sim a^\dagger(x)/a(R)$ and 
\beqn\label{eqP2}
P\big[\sigma^x_{R}< \sigma^x_{-Q} \,\big|\, \sigma^x_{-Q,R} < \sigma^x_0\big]
  \sim  a(Q)/a(Q+R);
  \eeqn
  
{\rm (ii)}\;  if $\, M^{-1} < Q/R \leq \limsup Q/R < M$ for some $M>1$ in addition,

\[
P\big[\sigma^x_{[R,\infty)} <  \sigma^x_{(-\infty, -Q]}  \,\big|\,\sigma^x_{\mathbb{Z} \setminus (-Q,R)}<\sigma^x_0\big]\sim a(Q)/a(Q+R).
\]
\end{proposition}
\begin{proof}
 By decomposing
 \beqn\label{oo}
 \{\sigma^x_{R} <\sigma^x_0\} = \{\sigma^x_{-Q}< \sigma^x_{R} <\sigma^x_0\} 
 +\{\sigma^x_{R}< \sigma^x_{-Q} \wedge \sigma^x_0\}
 \eeqn
 ($+$ designates the disjoint union) 
 it  follows that 
\[
H^x_{B(Q,R)}(R) =P[\sigma^x_{R}< \sigma^x_{-Q} \wedge \sigma^x_0] = P[\sigma^x_{R}<\sigma^x_0]-  P[\sigma^x_{-Q}< \sigma^x_{R} <\sigma^x_0].
\]
By the Strong Markov property  the last probability is at most
$ P[\sigma^x_{-Q} <\sigma^x_0]P[\sigma^{-Q}_{R} <\sigma^{-Q}_0]$.
By Lemma \ref{Lem35}  (see (\ref{prL6.1})) 
$P[\sigma^{-Q}_{R} <\sigma^{-Q}_0] \leq a(-Q)/a(R)$,
while owing to (\ref{cnstr}) we have $P[\sigma^x_{-Q} <\sigma^x_0] \sim a^\dagger(x)/a(Q)$.
%\eqref{eq:6.3}.  
  Combining these together verifies that 
%\beqn\label{eq:6.1}
\[
P[\sigma^x_{-Q}< \sigma^x_{R} <\sigma^x_0] \leq \frac{a^\dagger(x)a(-Q)}{a(R)a(Q)}\{1+o(1)\}.
\]
Hence   $H^x_{B(Q,R)}(R) \sim a^\dagger(x)/a(R)$.
   Now  noting  
   $P[\sigma^x_R<\sigma^x_{-Q}, \sigma_{-Q,R}<\sigma^x_0] = H^x_{B(Q,R)}(R)$ one can readily deduce (i)   from  (\ref{eP0}) and (\ref{eP}).
 
For the proof of (ii) let $\tau^x(Q)$ be the first time  $S^x_{\cdot}$ exits from $(-\infty,-Q]$   after its entering this half line (see (\ref{tau})) and ${\cal E}^x$ denote the event $\{\sigma^{x}_{(-\infty,-Q]} <\sigma^{x}_0\}$. Then 
\beq
P[\sigma^{x}_{(-\infty,-Q]} <\sigma^{x}_{[R,\infty)}<\sigma^{x}_0] 
%&\leq &  \sum_{-Q<y <R} P[S^x_{\tau(Q)}=y,\,  {\cal E}^x] P[\sigma^y_{[R,\infty)} <\sigma^y_0 ]   +  P[S^x_{\tau(Q)}\geq R,\,  {\cal E}^x] \\
\leq  \sum_{-Q<y<0} P[S^x_{\tau(Q)}=y, \, {\cal E}^x] P[\sigma^y_{[R,\infty)} <\sigma^y_0] +P({\cal E}^x)\times o(1),
\eeq
where $o(1)$ is due to Lemma \ref{Lem43} (see also (\ref{eq:6.3})) applied with $-Q$ in place of $-R$.

We claim that if $ Q/R < M$,  
$P[\sigma^y_{[R,\infty)}<\sigma_0^y] \to 0$ uniformly for  $-Q<y<0$, which combined with  the bound above   yields
\beqn\label{eq:6.48}
P[\sigma^{x}_{(-\infty,-Q]} <\sigma^{x}_{[R,\infty)}<\sigma^{x}_0] = P({\cal E}^x)\times o(1).
\eeqn
Since for any $\ep>0$, $P[S^y_{\sigma[0,\infty)} \geq \ep Q]\to 0$ uniformly for $-Q<y<0$ owing to Proposition \ref{Prop40}, it follows that under $Q/R < M$,
\[
P[\sigma^y_{[R,\infty)}<\sigma_0^y] \leq \sum_{0<z<R} P[S^y_{\sigma[0,\infty)} =z]P[\sigma^z_{[R, \infty)} <\sigma_0^z] +o(1) \leq P[\sigma^y_R<\sigma^y_0] +o(1) \to 0,  
\]
 where Proposition \ref{Prop52}(ii) is used for the second inequality. Thus the claim is verified.

For the rest we can proceed as in the proof of (i) above with an obvious   analogue  of (\ref{oo}). 
Under (\ref{cnstr}),  by Proposition \ref{Prop52}(ii) 
$P[\sigma_{[R,\infty)}^x <\sigma_0^x] \sim a^\dagger(x)/a(R)$   and by 
Proposition \ref{Prop52}(i) $P({\cal E}^x) \sim a^\dagger(x)/a(Q)$. Using (\ref{eP}) together with  \eqref{eq:6.48} we now obtain the asserted result as in  the same way as above. The proof of Proposition \ref{Prop60} is finished.
\end{proof}

\section{Examples}
Here we give two examples of different nature. The first one concerns  the case when $F$  belongs to the domain of attraction of a stable law:  we compute the exact asymptotic form of $a(x)$  and describe behaviour of the (one-sided) overshoot and the product $V_\ds(x)U_{\rm as}(X)$.
  The second one exhibits how   $a(x)$ and/or  $c(x)/m(x)$ can behave in  irregular ways.
 \vskip2mm
 
 %{\bf 8.1.} 
\textbf{ 8.1.} {\sc  Distributions in domains of attraction.}
\vskip2mm

In this subsection we suppose (in addition to (\ref{X0}))  that $X$ belongs to the domain of attraction of a stable law with exponent $1\leq \al\leq 2$, or equivalently
\beqn \label{7.1}
\begin{array}{ll}
{\rm (a)} \;\; \int_{-x}^x y^2dF(y) \sim L(x)\qquad &\mbox{if}\quad \al=2\\[2mm]
{\rm (b)} \;\;  {\displaystyle \mu(x) \sim  \frac{2-\alpha}{\alpha} x^{-\al}L(x)\quad\mbox{and}\quad \frac{\mu_+(x)}{\mu(x)} \to p} \qquad &\mbox{if}\quad 1\leq \al<2
\end{array}
\eeqn
as $x\to\infty$. Here and in the sequel  $0\leq p\leq 1$, and $L$  is  positive,  s.v. at infinity and 
 chosen so that  $L$ is differentiable and $L'(x) = o(L(x)/x)$. %Note that  (\ref{7.1}a) is equivalent to  $c(x)=\int_0^x y\mu(y)dy \sim L(x)$ and   holds  if  $\mu(x)$ is regularly varying with index $-2$. %\sim \tilde L(x)/x^2$. 
Let $Y$ be the limiting stable variable whose  characteristic function $\Psi(t) = Ee^{itY}= e^{-\Phi(t)}$  is given  by
 \[
 \Phi(t) =\left\{ \begin{array}{ll}  C_\Phi |t|^{\al}\{1 - i({\rm sgn}\, t) (p - q)\tan {\textstyle \frac12}  \al\pi\}  \qquad &\mbox{if}\quad 1<\al\leq 2,  \\[2mm]
C_\Phi |t|\{{\textstyle \frac12}\pi + i({\rm sgn}\, t) (p-q) \log |t|\}  \qquad &\mbox{if}\quad \al=1,
\end{array}\right.
\]
where $q=1-p$, ${\rm sgn} \, t= t/|t|$  and  $C_\Phi$ are some positive constants that depend on the scaling factors (cf. \cite[(XVII.3.18-19)]{F}). It is also supposed that $\int_1^\infty L(x) x^{-1}dx <\infty$ if $\al=1$ and $\lim L(x)=\infty$ if $\al=2$, so that $E|X|<\infty$ (so as to conform to   $EX=0$)  and $E X^2=\infty$ unless the contrary is stated explicitly (in some cases we allow  $E|X|=\infty$ as in Proposition \ref{Prop61}(iv), Remark \ref{Rem62}(i) and the subsection 8.1.2). 
\noindent\vskip2mm
%8.1.1
\textbf{8.1.1.} {\sc Asymptotics of $a(x)$.}

 Put
$L^*(x) = \int^\infty_x y^{-1}L(y)dy.$
Then  
\beqn\label{eta1}
\eta(x)=\left\{ \begin{array}{ll} o(c(x)/x) \quad&\mbox{if} \quad \al=2,\\[1mm]
(\al-1)^{-1}\mu(x)\{1+o(1)\} \quad&\mbox{if} \quad 1 < \al<2,\\[1mm]
L^*(x) \{1+o(1)\} \quad&\mbox{if} \quad  \al=1
\end{array}\right.
\eeqn
and
\beqn\label{c1}
c(x)= \al^{-1}x^{2-\al} L(x)\{1+o(1)\} \quad \quad (1\leq \al\leq 2)
%\left\{ \begin{array}{ll}  L(x) \{1+o(1)\} \quad&\mbox{if} \quad \al=2,\\[1mm]
%\al^{-1}x^{2-\al} L(x)\{1+o(1)\} \quad&\mbox{if} \quad 1\leq \al<2;\\
%\end{array}\right.
\eeqn
%where $c(x)$ is defined by (\ref{eq_c})
 accordingly 
\[
m(x) \sim \left\{ \begin{array}{ll} %L^\sharp(x)  \quad&\mbox{if} \quad \al=2,\\[1mm]
x^{2-\al}L(x)/[\al (\al-1)] \quad&\mbox{if} \quad 1 < \al\leq 2,\\[1mm]
xL^*(x) \quad&\mbox{if} \quad  \al=1.
\end{array}\right.
\]
The derivation  is straightforward.  
If $\al=2$, condition (\ref{7.1}a) is equivalent to  $x^2\mu(x) =o(L(x))$ as well as  to  $c(x) \sim L(x)/2$ (Appendix (A)),
which together  show  
\beqn\label{a=2}
\tilde m(x)=x\eta(x) + \tilde c(x)= o(m(x)) \quad \mbox{if}\quad \al=2.
\eeqn
%In this subsection we always assume that  the random walk $S_n$ is  recurrent.
%---entailing $p=1/2$ if $E|X|=\infty$ (see Remark \ref{rem7.1}).  

Asymptotics of $\al(t)$ and $\beta(t)$  as $t\to0$ are given as follows:
\[
\al(t) \sim \left\{\begin{array}{ll}  tL(1/t)/2  \quad&\mbox{if} \quad \al=2,\\[1mm]
\kappa_\al' t^{\al-1} L(1/t) \quad&\mbox{if} \quad 1< \al<2, \\[1mm]
\frac12 \pi L(1/t) \quad&\mbox{if} \quad \al=1,
\end{array} \right.
\]
and
\[
\beta(t) = \left\{\begin{array}{ll} o(\al(t))    \quad&\mbox{if} \quad \al=2,\\[1mm]
\kappa_\al'' t^{\al-1} L(1/t)\{1+o(1)\}  \quad&\mbox{if} \quad 1< \al<2, \\[1mm]
 L^*(1/t)\{1+o(1)\}  \quad&\mbox{if} \quad \al=1,
\end{array} \right.
\]
where $\kappa_\al'=(\al-2) \Gamma(-\al)\cos{\textstyle \frac12} \pi \al$ and $\kappa''_\al= (2-\al)\Gamma(-\al)\sin{\textstyle \frac12} \pi \al$; in particular for $1<\alpha <2$,
\beqn\label{CF}
1- E e^{it X}= t\alpha(t) \sim  (\kappa_\alpha' + i (2p-1)\kappa_\alpha'')t^\alpha L(1/t) \quad (t\downarrow 0).
\eeqn
 For verification see  \cite{Pt},  \cite[Theorems 4.3.1-2]{BGT} if $1\leq \al <2$.  The estimate in case   $\al=2$ is deduced from (\ref{eta1}) and (\ref{c1}). Indeed,  uniformly for  $\ep>0$
 \[
 \al(t) = \int_{0}^{\ep/t} \mu(x) \sin tx\, dx + O(\eta(\ep/t)) =tc(\ep/t)  \{1+O(\ep^2) +o(1)\},
 \]
 so that $\al(t) \sim tc(1/\ep t)\sim  tL(1/t)/2$; as for $\beta(t)$ use  
(\ref{m1}a) and (\ref{a=2}). 

  In case $\al=1$,
 we shall need the following second-order estimate:
\beqn\label{beta}
\beta(t) = \eta(1/t)  + \gamma L(1/t))\{1+o(1)\},
\eeqn
where $\gamma$ is Euler's constant. The proof, given below, is similar---rather simpler---to that 
of Proposition \ref{Prop34}(i).
It holds  that
\beqn\label{C1_C2}
\int_{1/t}^\infty \mu(y)\cos ty\,dy \sim C_1 L(1/t)\;\; \mbox{and} \;\; 
 \int_0^{1/t}\mu(y)(1-\cos ty)dy \sim C_2 L(1/t), 
 \eeqn
 where $C_1 =\int_1^\infty \cos s\,ds/s$, $C_2=\int_0^1 (1-\cos s)ds/s$.
 Indeed, by monotonicity of $\mu$  we infer that for each $M>1$, 
 $\big|\int_{M/t}^\infty \mu(y)\cos ty\,dy\big|\leq \pi\mu(M/t)/t \sim \pi L(1/t)/M$,
 %\beqn\label{C-C}
 %\bigg|\int_{M/t}^\infty \mu(y)\cos ty\,dy\bigg|\leq \frac{\mu(M/t)}{t} +  \bigg|\int_{M/t}^\infty \frac{\sin ty}{t}\,d\mu(y)\bigg| \leq \frac{2\mu(M/t)}{t} \sim \frac{2L(1/t)}{M}
% \eeqn
 while  
 $$\int_{1/t}^{M/t} \mu(y)\cos ty\,dy = \int_1^M L(z/t){\cos z} dz/z \sim C_1L(1/t)$$
  as $t\downarrow 0$ and $M\to\infty$ in this order.
One,  therefore,  obtains the first formula of (\ref{C1_C2}).  One can deduce the second one  similarly. 
  Since $C_1-C_2 =\gamma$, (\ref{beta}) now follows immediately.
 %which follows from the bounds   $\int_{0}^{1/t} \mu(y)(1-\cos ty)dy \leq CL(1/t)$ and $\int_{1/t}^\infty  \mu(y) \cos ty \,dy =O(L(1/t))$,   both being easy to see. 

The next proposition is valid also in  case  $E|X|=\infty$, while the r.w. $S$  is assumed to be {\it recurrent}. This remark is relevant only for the last assertion  (iv) of it, the r.w. being  recurrent under $E|X|=\infty$ only if   $\alpha=2p =1$   (see (\ref{Rec_tst}) for the recurrence criterion).

%Prop61
\begin{proposition}\label{Prop61}  Suppose that   (\ref{7.1}) is satisfied.  Then as $x\to\infty$
\vskip2mm
{\rm (i)}\; except in  case  $\al=2p=1$, %and $p = 1/2$, %\qquad \qquad \qquad\qquad \qquad \qquad 
\beqn\label{i}
\bar a(x)\sim  \kappa_\al^{-1} x/m(x), 
\eeqn%\qquad \qquad \qquad\qquad \qquad \qquad
%\vskip2mm\noindent
 where $\kappa_\al = 2\big \{\cos^2 \frac12 \pi \al + (p-q)^2\sin^2 \frac12 \pi\al \big\}\Ga(\al)\Ga(3-\al)$ $\big[\kappa_\al =0 \Leftrightarrow \al=2p=1\big];$
 \vskip2mm
 {\rm (ii)}\; if   $1\leq \al<2$ and $p\neq 1/2$ in case $\alpha=1$, 
\beqn\label{a+/-}
\left\{\begin {array}{ll}  a(-x) \sim 2p\bar a(x),\\
   a(x) \sim 2q\bar a(x),  \end{array} \right.
 \eeqn
 where the sign \lq \,$\sim$' is interpreted in  the obvious way if $pq=0;$ and
  \vskip2mm
 {\rm (iii)}\;  if  $\al=2$,   $\mu(x)x^2$ is s.v.    and  $\mu_+(x)/\mu(x) \to p$ ($x\to \infty$, $0\leq p\leq1$), then (\ref{a+/-}) holds$\,;$
 
% [The assumption in (iii) is the same as assuming the second relation of (\ref{6.1}) for $\al=2$ 
 %(hence entailing the first of it)  according to the monotone density theorem \cite{BGT}.]
 \v2
 {\rm (iv)}\;  if $\alpha =2p=1$ and  there exists $r:=  \lim P[S_n>0]$, then 
 \beqn\label{iv}
 \bar a(x)  \sim \left\{\begin{array} {ll} {\displaystyle \frac{2\sin^2 r\pi}{\pi^2} \int_1^x\frac{dy}{yL(y)} } \quad &\mbox{if}\quad  0< r <1,\\[4mm]
 {\displaystyle \frac12 \int_{x_0}^x \frac{\mu(y)}{A^2(y)} dy} \quad &\mbox{if}\quad r =  \;\mbox{$0$ or $1$};
 \end{array} \right.
 \eeqn
 $a(x)\sim a(-x)$;  and
 $$ x/m(x) =o(\bar a(x)) \quad \mbox{in case} \;\;  E|X|<\infty. $$ 
 
 [$A(x)$ is given in (\ref{A}) and $x_0$ in (c$'$) of Remark \ref{Rem11}; see Remark \ref{Rem62}(ii) when Spitzer's condition is not assumed..]
\end{proposition}

If $\alpha =1$, $p\neq q$ and $EX=0$, then $\int_1^x \mu(y)dy/A^2(y) \sim x/(q-p)^2 m(x)$ so that  the second expression on the RHS of (\ref{iv}) is natural extension of   that on the RHS of (\ref{i}).
%PROOF
\begin{proof}  \, (i) and (ii) are given in \cite[Lemma 3.3]{Belk}  (cf. also \cite[Lemma 3.1]{Uattrc} for $1<\alpha<2$) except for the case $\al=1$, and implied  by  Proposition  \ref{Prop34} for $\alpha=1$. 
%==== =====

%Suppose   $\al=1$ and $p\neq q$.  Then
%\[\bar a(x) =\int_0^\pi \frac{\alpha(t)(1-\cos xt)dt}{\pi[\alpha^2(t)+\ga^2(t)]t}=\int_0^\pi \frac{\frac12  L(1/t)\{1+o(1)\}}{[\frac12 \pi L(1/t)]^2 +[(p-q)L^*(1/t)]^2}(1-\cos xt)\frac{dt}{t}.\]  
%Let  $\tilde L(1/t)$ denote the ratio in the integrand which  is slowly varying.  By virtue of the assumption $p\neq q$, we have $\tilde L(1/t)\sim \frac12 (p-q)^{-2} L(1/t)/[L^*(1/t)]^2$.It is easy  to see that the integral restricted to the interval $(0, 1/x)$ is  $O(\tilde L(x))$.  Since $1-\cos xt\geq 0$,  $\tilde L(x)$ can be replaced by any asymptotic equivalent of it and hence we may suppose that  $\tilde L(x)$ is differentiable and $\tilde L'(x)/\tilde L(x)=o(1/x)$ so that $[\tilde L(1/t)/t]' \sim - \tilde L(1/t)/t^2$, hence 
%\beqn\label{L/L}
%\int_{1/x}^\pi\tilde L(1/t)\cos xt\, \frac{dt}{t} = \int_{1/x}^\pi \tilde L(1/t) \frac{\sin x t}{x t^2}\{1+o(1)\}dt + O( \tilde L(x))  = O( \tilde L(x))\eeqn
 %(for the second equality use the trivial bound $|\sin xt|\leq 1$). 
 %cf.  the proof of \cite[Theorem v.2.15]{Z}).
%From these bounds together with the relations  $x/m(x) \sim 1/L^*(x) >\!> \tilde L(x)\vee1$ and $c(x)\sim xL(x)$ we deduce 
%\[\bar a(x) \sim \int_{1/x}^\pi \tilde L(1/t)\frac{dt}{t} \sim \frac1{2(p-q)^2} \int_1^{x} \frac{ L(y)dy}{[L^*(y)]^2y}\sim \frac1{\kappa_1}  \int_1^x\frac{c(y)dy}{m^2(y)} \sim \frac{x}{\kappa_1m(x)}, \] 
%showing (i) for $\al=1$.  

%===== =====

\v2
%Proof of (7.9)
{\sc proof of} (iii). Recall that $a(x) = \bar a(x) + b_-(x) - b_+(x)$, where
$$b_\pm(x)= \frac1\pi \int_0^\pi \frac{\beta_\pm(t)} {[\alpha^2(t)+ \gamma^2(t)]}\cdot \frac{\sin xt}{t} dt$$
 (see (\ref{a/b})). 
 
 Let  $\al=2$ and   $L^\circ(x):= x^2\mu(x)$.   Then by the assumption of (iii) $\tilde L(x):=x\eta_+(x)\sim pL^\circ(x)$ is s.v. and $\tilde L'(x) = \eta_+(x) -x\mu_+(x) =o(\tilde L(x)/x)$ ($x\to\infty$), which leads to
 \beqn\label{stand}
 \beta_+(t) = t\int_0^\infty \eta_+(y)\sin ty \,dy= t\int_0^\infty \tilde L(y) \frac{\sin ty}{y}  \,dy \sim \frac12\pi p L^\circ(1/t)t
 \eeqn
 by a standard way  (cf. \cite[Theorem V.2.7]{Z}).    Thus as before we see
 $$b_+(x) = \frac p{2}\int_0^\pi \frac{L^\circ(1/t)\{1+o(1)\}}{[t L^\sharp(1/t)]^2}\sin x t\, dt \quad \bigg(L^\sharp(x) :=\int_1^x \frac{L^\circ(u)}{u}du\bigg). $$
 For $\ep>0$, on using  $|\sin xt|\leq 1$, the contribution from $t>\ep/x$ to the above integral 
 is dominated in absolute value by a constant multiple of 
  $$\int_{\ep/x}^\pi \frac{L^\circ(1/t)}{t^2 [L^\sharp(1/t)]^2}dt
   = \int_{1/\pi}^{x/\ep} \frac{L^\circ(y)}{[L^\sharp(y)]^2}dy\sim \frac{x}{\ep} \frac {L^\circ(x)}{[L^\sharp(x)]^2} =o\bigg(  \frac{x}{L^\sharp(x)}\bigg).$$
while the remaining integral  may be written as
 $$x\int_{0}^{\ep/x} \frac{L^\circ(1/t)\{1+o_\ep(1)\}}{[t L^\sharp(1/t)]^2} tdt  =x\int^\infty_{x/\ep} \frac{L^\circ (y)dy}{[L^\sharp(y)]^2y} \{1+o_\ep(1)\} = \frac{x}{L^\sharp(x)}\{1+o_\ep(1)\},
  $$
  Thus $b_+(x)\sim \frac12 px/L^\sharp(x)$. In the same way   $b_-(x)\sim \frac12q x/L^\sharp(x)$ and by (i) $\bar a(x) \sim \frac12 x/L^\sharp(x)$.  Consequently  $a(x)\sim qx/m(x) \sim 2q \bar a(x)$.

%kkkkkkkkkkkkk

%  For  $\al=1$ (with $p\neq 1/2$) we put    
  %$$L_\pm(x)= x\mu_\pm(x),\; L^*_{\pm}(x) =\eta_\pm(x) = \int_x^\infty L_\pm(y)\frac{dy}{y}.$$ It follows that $A(x)= \int_0^x(\mu_+ - \mu_-)dy = L^*_-(x)-L^*_+(x)$, and   by (\ref{beta}) 
 %$$\beta_\pm(t) = L_\pm^*(1/t)+  O(L_\pm (1/t))$$
 % (valid even if $pq=0$),   and if $p\neq q$, then $A(x) = (q-p)L^*(x)\{1+o(1)\}$  and 
 % $$\alpha^2(t)+\ga^2(t) = A^2(1/t)\big\{1+ O\big(L(1/t)/L^*(1/t)\big\} $$
 %so that
 %$$\frac{\beta_\pm(t)} {[\alpha^2(t)+ \gamma^2(t)]} = \frac{L_\pm^*(1/t)+ O(L(1/t))}{A^2(1/t)}. $$
%Note  that  $L_+^*(x)/A^2(x) \sim (2p/\kappa_1)/L^*(x)$ ($\kappa_1 =2(p-q)^2$) and   $A'(x)  = \mu_+(x)-\mu_-(x) \sim (p-q)L(x)/x$ so that  $A'(x)/A(x)\sim - L(x)/xL^*(x)$ and an elementary computation results in   $$\frac{d}{ds}\frac{L_+^*(s)}{A^2(s)} = -\frac{pL(s)}{A^2(s)s}\{1+o(1)\}  <0 
 %= o\bigg( \frac{L_+^*(s)}{A^2(s)s}\bigg)\qquad (s\to \infty).$$
 %Then  in a manner analogous to that deriving (\ref{stand}) we obtain 
 %$$b_+(x) =\frac1{\pi}\int_0^\pi \frac{L_+^*(1/t)}{A^2(1/t)} \cdot \frac{\sin xt}{t}dt +O(1) = \frac{p}{\kappa_1 L^*(x)}\{1+o(1)\}.$$
% In the same way $b_-(x) \sim [q/\kappa_1] /L^*(x)$.This  shows the required formulae for $a(\pm x)$. 

%kkkkkkkk

\v2
%{\sc proof of} (iv)
{\sc proof of} (iv). Here   $E|X|$ may be infinite. Note that $\alpha(t) \sim \frac\pi2 L(1/t)$ remains valid under $E|X|=\infty$.   In case $r=1$,  the assertion follows from  Proposition  \ref{Prop34} as a special case.  By duality the same is true in case $r=0$.

Let   $B(x)$ be an increasing positive  function of $x\geq0$ such that  $B(x)/x \sim L(B(x))$  as $x\to\infty$ (the both sides   are  necessarily s.v.).  Then  the laws of $S_n/B(n)$ constitute a tight family    if and only if $\rho_n$ is bounded away from $0$ and $1$.  The convergence
  $\rho_n \to r \in (0,1)$ is equivalent to
\beqn\label{m/Bn}
\qquad \lam_n:=nE[\sin \{X/B(n)\}] \to \lam   \quad (-\infty < \lam<\infty).
\eeqn
Together with  $\alpha(t) \sim \frac12 \pi L(1/t)$ the latter condition yields
\beqn\label{Esin}
-\gamma(t) = t^{-1}E[\sin X t]  \sim \lam  L(1/|t|)\qquad (t\to 0),
\eeqn
so that $1-\psi(t) \sim \big(\frac\pi2|t| -i\lam t\big) L(1/|t|)$ ($t\to 0$),  the limiting stable law 
is characterized by 
$$E[e^{itY}] = \lim E[e^{it(S_n/B(n) -\lam_n)} ]= e^{-\frac\pi2 |t|}$$
and the values of $r$ and $\lam$ are related  by
 \beqn\label{rho/Y}
  r = P[Y+\lam>0].
 \eeqn
The law of $Y$ has the density given  by
$\frac12\big[x^2+ \frac14\pi^2\big]^{-1}$, and  
 solving (\ref{rho/Y})  for  $\lam$, one obtains 
$$\lam= \frac\pi{2} \tan \big[\pi (r- \textstyle{\frac12})\big].$$

By (\ref{Esin})
$$\frac{\alpha(t)}{\alpha^2(t)+\ga^2(t)} \sim \frac{2/\pi}{(1+\tan^2[\pi(r - \textstyle{\frac12})])L(1/t)} = \frac{2\sin^2 r\pi} {\pi L(1/t)}. $$
and as before we deduce
$$\bar a(x) \sim \frac{2 \sin^2 r\pi}{\pi^2} \int_1^x \frac{dy}{yL(y)}.$$

%In the same way we have derived (\ref{beta}) 

 For the proof of $a(x) \sim a(-x)$, which is equivalent to $a(x)-a(-x)=o(\bar a(x))$, it suffices to show that
 \beqn\label{a-a}
  a(x) -a(-x)=\frac2{\pi}\int_0^\pi \frac{-\ga(t)\sin xt}{[\alpha^2(t)+\ga^2(t)]t}dt = o\bigg(\Big[\frac1{L}\Big]_*(x)\bigg),
  \eeqn
  where $[1/L]_*(x)= \int_1^x [yL(y)]^{-1}dy$.
Denote the above integral restricted on $[0,1/x]$ and $[1/x,\pi]$ by $J_{<1/x}$ and $J_{>1/x}$.  By  (\ref{Esin}) as well as $\alpha \sim \frac\pi2 L(1/t)$  it follows that  
$$J_{<1/x}  \sim Cx\int_0^{1/x} \frac{dt}{L(1/t)} \sim \frac2{\pi L(x)}, $$
hence $J_{<1/x}= o([1/L]_*(x))$.
%By $A^2(x)(1/A)'(x) = -K(x)=o\big(L(x)/x\big)$ it follows that $1/A(x) = o\big([L/A^2]^\sharp(x)\big)$, so that $J_{<1/x}= o(\bar a(x))$. 
 For the evaluation of $J_{>1/x}$, recall we have chosen $L$ so that  $L'(x)/L(x)=o(1/x)$.  
 %From   (\ref{beta}) if $EX=0$ and as in its proof if $E|X|=\infty$ (see (\ref{C-C})) we deduce beqn\label{-ga/A}-\ga(t) = A(1/t) +o(L(1/t)).\eeqn
%$$\int_0^{1/t} \mu_\pm(x) (1-\cos tx)dx \sim C_1 L(1/t) \quad \mbox{and} \quad \int_{1/t}^\infty \mu_\pm(x) \cos tx\,dx = C_2 L(1/t),$$
%where $C_1 = \int_0^1 (1- \cos y)dy/y$ and $C_2 =\int_1^\infty \cos y\,dy/y$. 
 Let  $\alpha_0(t) = \frac1{2}\pi L(1/t)$ and define $\tilde L$ and  $\tilde L_0$ by
$$\tilde L(1/t) = \frac{-\ga(t)}{\alpha^2(t) +\ga^2(t)} \quad \mbox{and}\quad
 \tilde L_0(1/t) = \frac{\lambda L(1/t)}{\alpha_0^2(t) +\lambda^2L^2(1/t)}.$$
Then  $\int_{1/x}^\pi \tilde L_0(1/t) \sin xt\, dt/t = O(\tilde L_0(x))= O(1/L(x))$, whereas  by (\ref{Esin})

  $$ \tilde L(1/t)  -\tilde L_0(1/t) = o\big(1/L(1/t)\big),$$
showing  $\big|\int_{1/x}^\pi \big[ \tilde L(1/t)  -\tilde L_0(1/t)] \sin xt \, dt/t\big| \leq \int_1^x | \tilde L(y)  -\tilde L_0(y)|dy/y = o\big([1/L]_*(x)\big)$. It therefore follows that  $J_{>1/x} = o\big([1/L]_*(x)\big)$. Thus 
(\ref{a-a}) is verified.
%One can easily see that $a(x)-a(-x)  =o(\bar a(x))$ (as in the derivation of (\ref{a-a})), whence $a(x)\sim a(-x)$.

In case $E|X|<\infty$, by  (\ref{iv}) as well as $m(x)/x \sim L^*(x)$   we can readily see that  $x/m(x) = o(\bar a(x))$ (see Remark \ref{Rem62}(ii) below). 
The proof of Proposition \ref{Prop61} is complete.
\end{proof}

%  similarly for $\beta_-(t)$ and that the ratio under the integral sign equals 
% \[ \frac{pL^*(1/t) + c_1 L(1/t)\{1+o(1)\}}{[|p-q|L^*(1/t) + c_2 L(1/t)\{1+o(1)\}]^2 } = \frac{p}{(p-q)^2 L^*(1/t)}+ \frac{c_3L(1/t)\{1+o(1)\}}{[L^*(1/t)]^2}. \]
% The  contribution of $o(\cdot)$ term to  $b_\pm$ is negligible as is verified in the same way as before and since  $1/L^*(1/t)$ is monotone, the usual estimation of the sine transform  of $1/tL^*(1/t)$ leads to  $b_\pm(x) \sim C_\pm x/m(x)$ 

\vskip2mm
%REM 62
\begin{remark}\label{Rem62}   (i)  Let $\alpha=1$  in (\ref{7.1}) and $E|X|=\infty$. Then  $F$ is recurrent if and only if
\beqn\label{Rec_tst}
\int_{1}^\infty \frac{\mu(y)}{\big(L(y)\vee |A(y)|\big)^2}dy =\infty
\eeqn
(cf. \cite[Section 4.3]{Urenw}); in particular if $p\neq 1/2$, then $F$ is transient, for we have  $|A(y)| \sim |p-q|\int_0^y\mu(t)dt \to\infty$ which implies $\int^\infty  \mu(x)dx\big/A^2(x) <\infty$.

(ii)  Let $\al=2p=1$  in (\ref{7.1}).    From the above proof of (iv) one has for $t>0$ $|\ga(t)|\leq C\big[L(1/t) \vee A(1/t)\big]$. It therefore follows  without assuming Spitzer's condition that
$$\bar a(x) \asymp \int_{1}^x \frac{\mu(y)}{\big(L(y)\vee |A(y)|\big)^2}dy;$$ 
If $EX=0$, then  $L(y)\vee |A(y)| =o\big(L^*(y)\big)$, which shows 
 that $\bar a(x)/[x/m(x)]\to\infty$, for $x/m(x)\sim 1/L^*(x)\sim \int_1^x \mu(y)dy/[L^*(y)]^2$. This is deduced also from Theorem \ref{th:1_2}(ii).

%\beqn\label{R7.1}
%\gamma(t) =\int_{0}^\infty [ \mu_-(x)- \mu_+(x)]\cos tx\, dx = -A(1/t) + O(L(1/t)),
%\eeqn
%as is verified  by $\int_{1/t}^\infty L_\pm(x) \cos tx\, dx =O(L(1/t))$ and $\int_0^{1/t} L_\pm(x) (1-\cos tx) dx = O(L(1/t))$.

%of which the second inequality sign   may be replaced by the equality sign in case $A(x)/L(x)  \to 0$,  and    % 
%is bounded from above by the first integral on the RHS of (\ref{iv}) and from below by the  second;   and  if $\rho_n$ stay in a compact set of $(0,1)$ in addition then  $\bar a(x)\asymp \int_1^x[yL(y)]^{-1}dy$.
\end{remark}
\v2 
\noindent
%Rem 63
\begin{remark} \label{Rem63}  If $\al=2$ and the assumption of (iii) of Proposition \ref{Prop61} is satisfied, then equality (\ref{cons1}) (for $x$ fixed) holds, which however fails  if
 $1\leq \al<2$, $0<p<1$  and Spitzer's condition (\ref{SpC}) is fulfilled.     
  This is  verified  by combining   (\ref{eta1}) and  (\ref{c1}).  
 % what are mentioned above in (i), (ii)  and (ii) and the fact  that the conditioning $\sigma^x_0>\sigma^x_R \wedge \sigma^x_{-R}$ does not change the probabilities in question if  $x=0$ (see (\ref{stch_ind})). 
   If either $\al\neq 1$  or  $\al=1$ with $p=1/2$ and  $0<r<1$, the verification is  easy and omitted.
 Let $\al=1$ with $r=1$ (including the case $0<p<1/2)$, the case $r=0$ being similar. Since  the probability of $S^0_\cdot$
 exiting $[-R,R]$ through $R$ then tends to 1 (cf. \cite[Section 7.3]{D_L}), it suffices to show that  $P[\sigma^0_{R}<\sigma^0_{-R}]\sim q$. Let $\lam_\pm =P[\sigma^0_{\pm R}<\sigma^0_{\mp R}\vee \sigma^0_0]$. Using Proposition \ref{Prop61}
 one  observes that $P[\sigma^R_{-R}<\sigma^{R}_0]\to 1-p$ and  $P[\sigma^{-R}_{R}<\sigma^{-R}_0]\to p$ so that 
 \[
 P[\sigma^0_{-R}<\sigma^{0}_0]=P[\sigma^0_{R}<\sigma^{0}_0] 
\sim \lam_-+ q\lam_+ \sim  \lam_++ p\lam_-,
\] 
showing $\lam_-/\lam_+ \sim p/q$. Hence $P[\sigma^0_{R}<\sigma^0_{-R}] = \lam_+/(\lam_-+\lam_+) \sim q$, and 
solving  the linear equations   we obtain    $\lam_+ \sim [q/(1-pq)]/2\bar a(R)$.
 \end{remark}
 \v2

%Let $1<\al\leq 2$ and determine a real number $\gamma =\gamma(p,\al)$  by  $|\gamma|\leq 2-\al$ and $\tan \frac12 \gamma \pi =-(p-q)\tan \frac12 \al\pi$.  Then for  $\al\neq 2$, $\kappa_\al^{-1}$ in Proposition \ref{prop6.1} is rewritten as
%\[\frac1{\kappa_\al} = \frac{-\tan(\frac12 \al\pi)\cos^2( \gamma\pi /2)}{\pi (\al-1)(2-\al)}.\]
% if  $p=0$, then   $\gamma = \al-2$ so that  $D_\gamma=\cos^2(\frac12 \al\pi)$ and   $\kappa_\al^{-1} = \frac12\sin[\pi(\al-1)]/\pi (\al-1)(2-\al)$.
%According to \cite{Zol} in case  $1<\al\leq 2$ Spitzer's constant $\rho: =\lim P[S_n^0>0]$  is given by
%\beq
%\rho &=& 2^{-1} +(\pi\al)^{-1}\arctan [(p-q)\tan{\textstyle \frac12}\al\pi] \\&=& 2^{-1}(1-\gamma/\al).\eeq
%\end{remark}
%\vskip2mm
 
Suppose $m_+(x)/m(x) \to 0$. Then   $E(-\hat Z)=\infty$ under which
 it holds  that
 \beqn\label{a_H}
 a(-x)= \sum_{y=1}^\infty H_{[0,\infty)}^{-x}(y)a(y), \quad x>0
 \eeqn
 \cite[Corollary 1] {Uladd}.  Using this identity we are going to derive  the asymptotic form of $a(-x)$ as $x\to\infty$ when $\mu_+(x)$ varies regularly at infinity. Recall that 
 $V_{{\rm ds}}(x)$ denotes the renewal function for the weakly descending ladder height process of the r.w.  As in Section 7.3,  let $U_{{\rm as}}(x)$  be  the renewal function for the strictly  ascending one and put
 $$\ell_+(x) = \int_0^x P[Z> t]dt \;\; \mbox{for} \; 1\leq \al \leq 2;\mbox{and}\quad \ell_-(x) = \int_0^x P[-\hat Z> t]dt \;\; \mbox{for} \;  \al = 2.$$
 We know  that $\ell_+(x)$ and $\ell_-(x)$ are   s.v. as $x\to\infty$  and so   is $P[-\hat Z>x]$ for $\al =1$  and that
 \beqn\label{U/V}
 U_{{\rm as}}(x) \sim \frac{x}{\ell_+(x)} \quad\mbox{and}\quad
 V_{{\rm ds}}(x) \sim \left\{
 \begin{array}{ll} 
 c_0^{-1}x/\ell_-(x) \quad & \alpha =2,\\[1mm]
 \kappa_\al' x^{\al-1}\ell_+(x)/L(x) \;\; & 1<\al <2,\\[1mm]
 c_0^{-1}/P[-\hat Z>x] \quad & \alpha =1,
 \end{array} \right. 
 \eeqn
where $ \kappa_\al' =-\al[(2-\al)\pi]^{-1}\sin \al \pi$  and  $c_0= e^{-\sum p^k(0)/k}$ (provided   (\ref{7.1}) holds): see  \cite[Theorems 2, 3 and 9]{R}, \cite[Lemma 8.8]{Uattrc}  (for the slow variation of  $P[-\hat Z>x]$ in case $\alpha=1$ see the last mentioned result (in the dual form) in (iii) of the next subsection).  By  Proposition \ref{Prop61}(i)  
$$
a(x) \sim \big[2\al (\al-1) \kappa_\al^{-1}\big] x^{\al-1}/L(x) \quad (x\to\infty) \quad \mbox{ for}\quad 1<\al\leq 2
$$
 and comparing  the asymptotic formulae of  Lemma \ref{Lem45}(i)  and  (\ref{U/V})  we find 
\beqn\label{2ell/L}
2\ell_-(x)\ell_+(x) \sim c_0^{-1}L(x) \qquad (\al =2).
\eeqn 

 %Prop64
 \begin{proposition}\label{Prop64}\,  Suppose that  (\ref{7.1})  holds  and  $m_+(x)/m(x) \to 0$ and that either
 $\mu_+(x)$ is regularly varying at infinity with index  $-\beta$ or  $\sum_{x=1}^\infty \mu_+(x) [a(x)]^2<\infty$. Then  
 $$
a(-x) \sim \left\{ 
\begin{array} {ll}
{\displaystyle U_{{\rm as}}(x) \sum_{z=x}^\infty  \frac{\mu_+(z) V_{{\rm ds}}(z)a(z)}{z} }\quad &\mbox{if}\quad \al=\beta=2,\\[4mm]
{\displaystyle  C_{\al,\beta}\frac{U_{{\rm as}}(x)}{x}\sum_{z=1}^x  \mu_+(z)V_{{\rm ds}}(z)a(z) }\quad &\mbox{otherwise}
\end{array}\right.
$$
as $x\to\infty$,  where 
$C_{\al, \beta} =[\Ga(\al)]^2\Ga(\beta-2\al+2)/\Ga(\beta).$ 
%$C$ is a constant that is positive (unless the walk is right-continuous). 
 
 [Explicit expressions of the RHS  are given  in the proof: see (\ref{asmp1}) to (\ref{asmp4}). By Theorem \ref{Thm6} and Proposition \ref{Prop61}(i) it follows  that  $a(x)\sim [\Ga(\alpha)\Ga(3-\alpha)]^{-1} x/m(x)$, provided  $m_+/m\to 0$.]
 \end{proposition}
%proof
 \begin{proof}  \, Put  $u_{{\rm as}}(x)= U_{{\rm as}}(x)- U_{{\rm as}}(x-1)$, $x>0$ and $u_{{\rm as}}(0) =U_{{\rm as}}(0) =1$. Then $G(x_1,x_2) := u_{{\rm as}}(x_2-x_1)$ is the Green function of the strictly increasing ladder process killed on its exiting the half line $(-\infty, 0]$ and
  by the last exit decomposition we obtain
  \beqn\label{LED}
  H^{-x}_{[0,\infty)}(y) = \sum_{k=1}^x u_{{\rm as}}(x-k)P[Z=y+k] \quad (x\geq 1, y\geq 0)
  \eeqn
(see (\ref{def_H}) for $H^{-x}_{[0,\infty)}$). Suppose the conditions of the proposition to hold  and let  $\mu_+(x) \sim L_+(x)/x^\beta$  with   $\beta \geq \alpha$ and $L_+$ slowly  varying at infinity.  Then on  summing by parts 
 \beq
 P[Z> y] = \sum_{k=-\infty}^0 g_{[1,\infty)}(0,k)\mu_+(y-k)
 &=& \sum_{z=0}^\infty v_{{\rm ds}}(z) \mu_+(y+z)\\
 &\sim& \beta \sum_{z=0}^\infty V_{{\rm ds}}(z) \frac{\mu_+(y+z)}{y+z+1}\\
 &\sim&  C_0 V_{{\rm ds}}(y)\mu_+(y) \qquad (y\to\infty), 
 \eeq
where $v_{{\rm ds}}(z) =  V_{{\rm ds}}(z) -V_{{\rm ds}}(z-1)$ and 
%the constant  $C_0$ is identified by the equality
$$C_0= \beta \int_0^\infty t^{\alpha-1} (1+t)^{-\beta-1}dt =\frac{\Ga(\al)\Ga(\beta-\al+1)}{\Ga(\beta)}.$$ 

 If  $\beta> 2\al-1$, then  $\sum \mu_+(x) [a(x)]^2 <\infty$ and  $a(-x)$ converges to a constant that is positive if the walk is not right-continuous (cf. \cite[Theorem 2]{Uladd}). Hence we may consider only  the case  $\al\leq \beta\leq 2\al-1$. 
 
  Let $\al>1$. Recall  $a$ varies regularly with index  $\al-1$ at infinity. Then  substituting the above equivalence  into (\ref{LED}), returning to 
 (\ref{a_H}) and performing   summation by parts  lead to 
 \begin{eqnarray}
 a(-x) &\sim & (\alpha-1) \sum_{y=1}^\infty \sum_{k=0}^x u_{{\rm as}}(x-k)P[Z\geq y+k] \frac{a(y)}{y}  \nonumber\\
 &\sim& (\alpha-1) C_0 \sum_{y=1}^\infty \sum_{k=0}^x 
 u_{{\rm as}}(x-k)V_{{\rm ds}}(y+k) \mu_+(y+k) \frac{a(y)}{y}. 
 \label{711}
 \end{eqnarray}
Owing to  (\ref{u/ell}), namely 
$u_{{\rm as}}(x) \sim 1/ \ell_+(x)$, 
 one can replace  $ u_{{\rm as}}(x-k)$ by  $1/\ell_+(x)$ in (\ref{711}), the inner sum 
over  $(1-\ep)x <k\leq x$ being negligible as $x\to \infty$ and $\ep\to 0$. After  changing the variables by $z = y+k$  the last double sum restricted to $y+k\leq x$  then becomes asymptotically equivalent to
\beqn\label{k/j}
\frac1{\ell_+(x)}\sum_{z=1}^xV_{{\rm ds}}(z)\mu_+(z) \sum_{k=0}^{z-1} \, \frac{a(z-k)}{z-k}
\sim \frac1{(\al-1)\ell_+(x)}\sum_{z=1}^xV_{{\rm ds}}(z)\mu_+(x) a(z).
\eeqn

If $\beta =2\al-1 >1$ (entailing $\lim \ell_+(x) = EZ<\infty$), then $C_0=C_{\al,\beta}$,  the last sum in (\ref{k/j}) varies slowly and the remaining part of the double sum on the right side of (\ref{711}) is negligible,  showing 
\beqn\label{asmp1}
a(-x) \sim \frac{C_0}{\ell_+(x)} \sum_{z=1}^xV_{{\rm ds}}(z)\mu_+(z) a(z) \sim \frac{C_{\al,\beta}}{\ell_+(x)} \sum_{z=1}^x \frac{L_+(z)}{\ell_*(z)z},
\eeqn
where 
$$\ell_*(z) = \left\{
\begin{array}{ll} c_0\ell_-(z)L(z)/2  \quad  &\al=2,   \\[1mm]
 (\kappa_\al/2\al(\al-1)\kappa_\al')[L(z)]^2/\ell_+(z) \quad  & 1<\al<2.
 \end{array}\right.
$$  

 Let  $\al\leq \beta <2\al-1$. If  $|\al-2|+ |\beta-2|\neq 0$, then the outer sum in (\ref{711}) over $y>M x$ as well as that over $y<x/M$ becomes negligibly small  as $M$ becomes large and one can easily
infer that 
\beqn\label{asmp2}
a(-x) \sim   \frac{(\alpha-1)C_1C_0L_+(x) x^{2\al-1-\beta}}{\ell_+(x)\ell_*(x)(2\alpha-1-\beta)} 
\sim \frac{C_{\al,\beta}}{\ell_+(x)} \sum_{z=1}^xV_{{\rm ds}}(z)\mu_+(z) a(z),
\eeqn
where $C_1 =(2\alpha-1-\beta) \int_0^1 ds \int_0^\infty  (s+t)^{-\beta+\al-1}t^{\al-2}dt=\int_0^\infty  (1+t)^{-\beta+\al-1}t^{\al-2}dt$ and  $C_{\al,\beta}$ is identified by $(\al -1)C_1C_0= C_0 \Ga(\al)\Ga(\beta-2\al+2)/\Ga(\beta-\al+1) = C_{\al,\beta}$. %and $C =C_0/(2-\alpha)$ or $C_0$ according as $\alpha \neq 2$ or  $\alpha =2$. 

Let $\al =\beta=2$.  Then 
$a(x)\sim 2x/L(x)$ and uniformly for $0\leq k\leq x$
$$\sum_{y=x}^\infty V_{{\rm ds}}(y+k)\mu_+(y+k)\frac{a(y)}{y} = \frac1{c_0} \sum_{y=x}^{\infty} \frac{ L_+(y+k)\{1+o(1)\}}{(y+k)\ell_-(y+k) L(y)/2}\sim \sum_{y=x}^\infty  \frac{ L_+(y)}{\ell_*(y)y}$$
while on  recalling  (\ref{k/j}) 
$\sum_{k=1}^x\sum_{y=1}^x V_{{\rm ds}}(y+k)\mu_+(y+k){a(y)}/{y} \leq  C''' xL_+(x)/\ell_*(x)$.  Wth these two bounds we deduce from   (\ref{711})  that  
\beqn\label{asmp3}
a(-x) \sim 
\frac{C_0x}{\ell_+(x)} \sum_{y=x}^\infty V_{{\rm ds}}(y)\mu_+(y)\frac{a(y)}{y}\sim \frac{x}{\ell_+(x)} \sum_{y=x}^\infty \frac{L_+(y)}{\ell_*(y)y}
\eeqn
(since  $(\al-1)C_0=C_0=1$).  

The relations (\ref{asmp1}) to (\ref{asmp3}) together show those of  Proposition \ref{Prop64} in case $1<\al\leq 2$ since $U_{{\rm as}}(x) \sim x/\ell_+(x)$.
 
  It remains to deal with  the case $\al=\beta =1$ %and $\sum a^2(x) (1-F(x)=\infty$, or what is the same thing 
  %\beqn\label{F/LL}  \sum [1-F(x)]/[L^*(x)]^2 =\infty, \eeqn
when  in place of (\ref{711}) we have
  $$a(-x)\sim  \sum_{y=1}^\infty \sum_{k=0}^x\frac{ u_{{\rm as}}(x-k)\mu_+(y+k)}{c_0P[-\hat Z>y+k]} \cdot \frac{d}{dy}\frac1{L^*(y)}.$$
 Note $P[-\hat Z>x]$ is s.v. and  $\frac{d}{dy}[1/L^*(y)]= L(y)/y[L^*(y)]^2 $. Then    one sees that 
the above double sum restricted to $y+k\leq  x$ is asymptotically equivalent to
\beqn\label{asmp4}
\frac{1/c_0}{\ell_+(x)} \sum_{z=1}^x \frac{L_+(z)}{ zP[-\hat Z>z]L^*(z)}, \eeqn
hence s.v., while  the outer sum over $y> x$ is negligible.  It therefore follows that the above formula  represents the asymptotic form of  $a(-x)$ and  may be written  alternatively   as 
  $x^{-1}U_{{\rm as}}(x) \sum_{y=1}^x V_{{\rm ds}}(y)\mu_+(y)a(y)$ as required.
\end{proof} 
%where the symbol $\sum \!^\bigstar$ designates  the summation which is restricted to  $j$ such that $z+j$ is even.
%Since $\sum_{j=-z}^z \!^{\!\bigstar} \, \frac{a(\frac12(z+j))}{\frac12(z+j)} = \sum_{w=0}^z a(w)/(w+1) \sim a(z)/\beta$

%Rem7.3
\begin{remark}\label{rem7.30} 
 Let   $\alpha=q=1$.  Then $P[-\hat Z> x] \sim \int_x^\infty \big[ F(-t)/\ell_+(t) \big]dt\big/V_{\rm ds}(0)$ according to Lemma \ref{Lem50}. If  $EZ<\infty$ in addition,  by evaluating  the formula in (\ref{asmp4}) one can find that  $a(-x)\sim \int_1^x  (1-F(t))[c_0 L^*(t)]^{-2}dt$. %provided (\ref{F/LL}) holds.
\end{remark}
%Rem7.4
\begin{remark}\label{rem7.3} Let the assumption of Proposition \ref{Prop64} be satisfied and $M$ be an arbitrarily given number $>1$.  If $\al =\beta=1$ or  $2\al-1\leq \beta$, then $a(-z)$ is s.v. as $z\to\infty$ so that  (\ref{ass1}) holds
  uniformly for $-M R< x<0$.  
 Below we show the following.
  \v2

  (i) \; For $\al =2$, $ P[\sigma^{x}_R <\sigma^x_0]/ P[\sigma^{x}_{[R,\infty)} <\sigma^x_0] \to 1$\; as $R\to\infty$ uniformly for  $|x| <MR$, although  (\ref{ass1}) may fail even if $0<-x <\!< R$. [In case $\beta=2$, in particular, $P[\sigma^{x}_{[R,\infty)} <\sigma^{x}_0] =o\big(a(x)/a(R)\big)$ for $ -MR<x< -R/M$.]

  \v2
   (ii) \;If $\alpha\leq \beta <2\al -1<3$, then  $\de < P[\sigma^{x}_R <\sigma^{x}_0]/ P[\sigma^{x}_{[R,\infty)} <\sigma^{x}_0] < 1-\de$  for  $ -MR<x< -R/M$ for some 
  $\de=\de_M>0$.
  \v2\noindent
 %\v2
%(iii)\; for $-R\ll x<0$, $ P[\sigma^{x}_R <\sigma^x_0]/ P[\sigma^{x}_{[R,\infty)} <\sigma^x_0] \to 1$ as $R\to \infty$.

For the proof we first deduce  that if $\beta>1$, then  for any  $\ep>0$ there exists $k\geq 1$ such that for $-MR<x<0$, 
\beqn \label{R7.31}
P\big[ S^x_{\sigma[0,\infty)} > k R \,\big|\, S^x_{\sigma[0,\infty)} \geq R\big] \leq \ep. 
\eeqn 
To this end use the inequality 
$g_{[0,\infty)}(x,y)\leq g_{\{0\}}(x,y)\leq  g_{\{0\}}(x,x)=2\bar a(x) $ to see that
$H_{[0,\infty)}^{x}(y)%=\sum_{z\geq1} g_{[0,\infty)}(x,-z)P[X=y+z] 
\leq 2\bar a(x) \mu_+(y),$
showing  for  $k\geq 1$
\beqn\label{LB1}
P\big[S^x_{\sigma[0,\infty)} > kR\big] \leq 2\bar a(x)\sum_{y\geq kR} \mu_+(y).
\eeqn
For the lower bound, noting $g_{B}(x,-z)=g_{-B}(z,-x)$ we apply the bound (\ref{sb6.3}) to have
$$g_{[0,\infty)}(x,-z)\geq g_{\{0\}}(z,-x) -a(x)\{1+k_+\} = a(z)-a(z+x)- o(a(|x|)),$$
and hence that for $|x|/2\leq z\leq |x|$,  $g_{(-\infty,0]}(x,-z) \geq a(|x|/2)\{1+o(1)\}$. This leads to
\beqn\label{R7.32}
H^{x}_{[0,\infty)}(y) \geq C_1 a(|x|)\mu_+(y).
\eeqn 
% (with $C_1=(\beta-1)^{-1}(2^{\beta-1} -1)$), 
Hence by the   assumed regular variation of $\mu_+$, $P[R < S^x_{ \sigma[0,\infty)}\leq 2R] \geq C_1' \bar a(|x|) \sum_{y\geq R}\mu_+(y)$,
 which together with (\ref{LB1}) yields (\ref{R7.31}). 

If $\al=2$,  then $a(y)-a(-R+y) =a(R) +o(a(y))$ uniformly for $y\geq R$ in view of Proposition \ref{Prop61}(i),  and one easily infers that $P[\sigma^y_R<\sigma^y_0]\to 1$ uniformly for $R\leq y\leq kR$.  Combined with the result for  $x\geq 0$ (given in (\ref{eqP6.2}) as well as in (\ref{R7.31}))  this  shows (i). [See (\ref{asmp3}) for  what is mentioned about  (\ref{ass1}).]

The proof of (ii) is similar. Note that $1<\alpha\leq \beta$ under the premise  of (ii).   The lower bound follows from  (\ref{L6.5ii}) and the first relation of  (\ref{asmp2}).  For the upper bound one observes that under the premise of (ii),  $P[\sigma^y_0<\sigma^y_R]$ is bounded away from zero for $2R\leq y\leq 3R$, which together with (\ref{R7.32}) shows that
$$P\big[S^x_{\sigma[0,\infty)} \geq R, \sigma_0^x<\sigma_R^x\big] \geq c a(|x|)\mu_+(R)R$$
with some $c>0$. On the other hand 
we have  $P[\sigma^{x}_R<\sigma^x_0] \asymp a(-R)/a(R)$ for  $- MR \leq x\leq R/M$ and   $a(-R)/a(R) \asymp a(R) \mu_+(R)R$. These together verify  (ii).
%By (\ref{LB1})   $P\big[S^x_{\sigma[0,\infty)} \geq (1+\ep)R \,\big|\, \big]$
\end{remark}

\vskip2mm
\textbf{8.1.2}\;  {\sc Relative stability.}

We continue to suppose (\ref{7.1})  and  examine the behaviour of  the overshoot
$Z_R = S_{\sigma(R,\infty)} -R$.
We do not assume the condition  $E|X|<\infty$ (automatically valid for $1<\alpha \leq 2$) nor the recurrence of the r.w., but assume that in case $E|X|=\infty$,  $\sigma_{[1, \infty)} <\infty$ w.p.1 or, what is the same thing (according to  \cite{E1}),  $\int_{1}^\infty xdF(x)\big/\big[1+ \int_0^x F(-t)dt\big] =\infty$. 
 We shall observe that under (\ref{7.1}), the sufficient condition of Proposition \ref{Prop40} is also necessary for $Z$ to be r.s. 
In case $1<\al\leq 2$ let $\rho= P[Y>0]$, which $P[S_n>0]$ approaches as $n\to\infty$.
\vskip2mm

(i)\, {\it If   either $1<\alpha <2$ and $p=0$ or $\al=2$, then  $x\eta_+(x) =o(m(x))$ so that   $Z_R/R \stackrel{P}\to 0$  as $R\to \infty$ according to Proposition \ref{Prop40}. }
(In this case, we have  $\al\rho =1$, and  the same result also follows from Theorems 9 and 2 of   \cite{R}.)  
\v2
(ii)  {\it If $1<\al<2$ and $p> 0$, then $0<\al \rho<1$,  which  implies that $P[Z>x]$ is regularly varying of index $\al\rho$ \cite{R} and   the distribution of 
$Z_R/R$ converges weakly to the probability law determined by the  density  $C_{\al\rho}/x^{\al\rho}(1+x), x>0$ (\cite{R}, \cite[Theorem XIV.3]{F}).}
\v2
(iii) {\it Let $\al=1$. If $p=q=1/2$ we also suppose  that  Spitzer's condition is satisfied, 
or what amount to the same thing, there exists
\beqn\label{SpC}
\lim  P[S_n >0] =r.
\eeqn
It then follows that  $r=1$ if and only if (C.I) of Proposition \ref{Prop40}  holds, and if this is the case   $Z$ is r.s. by Proposition \ref{Prop40}
  so that $Z_R/R \, \stackrel{P}\longrightarrow \,  0 $. 
  
   In case $p\neq 1/2$,  by examining (C.I) one easily deduces that (\ref{SpC}) holds with $r\in \{0,1\}$: $r=1$ if either $p < 1/2$ and $EX=0$  or  $p>1/2$  and $E|X|=\infty$;   $r=0$   in the other case  of $p\neq 1/2$. 

   If $p=1/2$, $r$  may take all values from $[0,1]$.  
  If   $0<r<1$ (entailing $p=\frac12$), then the law of  $Z_R/R$ converges  to  a probability law with the  density  $[\pi \sqrt x(1+x)]^{-1}, \, x>0$. [Combined with what is stated above this entails that $Z$ is r.s. if and only if $r=1$.]
  If $r=0$ (i.e., either  $p>1/2$ with $EX=0$ or $p<1/2$ with $E|X|=\infty$ or $p=1/2$ with $r=0$),  then   $P[Z>x]$ is s.v. at infinity and $Z_R/R \, \stackrel{P}\longrightarrow \,  \infty $. }

\v2

%$$Z_R/R \, \stackrel{p}\longrightarrow \, \left\{\begin{array}{ll} 0 \quad &\mbox{if} \quad p<1/2,\\
%\infty \quad  &\mbox{if} \quad  p>1/2.\end{array}\right. $$

Thus in case $1<\al\leq 2$  condition (C.II)  works as a criterion   for the relative stability of $Z$, while  if $\al=1$, $Z$ can be r.s. under  $x\eta_+(x) \asymp m(x)$ (so that (C.II) does not hold)  and condition (C.I) must be employed for the criterion.
\vskip2mm

For  (iii)  some explanations are needed. Let $\al=1$.   What is mentioned about the case   $r=1$  
%is deduced from the  convergence of the normalized walk to a stable variable as described in \cite[Section XVII.5]{F}.  It also 
follows immediately from the result of \cite{KM} mentioned in Remark \ref{Rem11}: under (\ref{7.1}), one sees that condition (c$'$) in it is equivalent to (C.I) so that (C.I) $\Leftrightarrow$ $r=1$ in view of (\ref{A/KM2}). 
If $0<r<1$, $S^0_n/B(n)$ converges in law (see the argument made around (\ref{m/Bn})),  implying  the asserted convergence in law of $Z_R/R$  (cf.  \cite[Section 4]{R}).
If $r=0$,  the result stated in (iii) is shown in \cite{Uexp} (see also (the proof of) Lemma 3.5(i) and Lemma  5.1(ii)  of \cite{Uexit} for another reasoning and related results).

\vskip2mm
%sect. 8.2
\textbf{ 8.2.}  { \sc An example exhibiting  irregular behaviour of $a(x)$.}
\vskip2mm

 We give a recurrent symmetric r.w. such that $E[ | X|^\al/ \log( | X|+2)]<\infty$, and  $P[|X|\geq x] =O(|x|^{-\alpha})$ for some $1<\al<2$ and 
\beqn\label{Ex1}
0< \!\!\,^\forall r<1, \;\; \bar a(r x_n)/\bar a(x_n) \;  \longrightarrow \; \infty \quad \mbox{as} \;\;n\to\infty
\eeqn
for some  sequence $x_n\uparrow \infty$, $x_n\in 2\mathbb{Z}$, which provides  a counter  example
for the fact mentioned right after Theorem \ref{th:1_2}.  
In fact for the law $F$ defined below it holds that  for any $0<\de <  2-\alpha$ there exists  positive constants  $c_*$ such that for all sufficiently large $n$,
\beqn\label{Ex10}
\bar a(x)/\bar a(x_n) \geq  c_*n \quad \mbox{for all integers $x$ satisfying}\;\; 2^{- \delta n} < x/x_n \leq 1-2^{-\de n}.
\eeqn
( $-\bar a$ diverges to $-\infty$ fluctuating  with sharp peaks at the points $x_n$  and very wide basins in between.)

%very long plateaus----gently varying intervals---and relatively deep ravines.)

Put
\[
x_n=2^{n^2},\;\; \lambda_n=x_n^{-\al}=2^{-\al n^2}\quad (n=0, 1,2,\ldots),
\]
\[
p(x)=\left\{\begin{array}{ll} A\lambda_n\quad &\mbox{if}\;\; x=\pm x_n\;\; (n=0, 1,2,\ldots),\\
0 \quad&\mbox{otherwise}, %\;\; x\notin \{\pm x_n: n=0, 1,2,\ldots\},
\end{array}\right.
\]
where $A$ is the  constant chosen so as to make $p(\cdot)$  a probability.   

Denote by $\eta^{(1)}_{n,k,t} $ the value of $1- 2(1-\cos u) /u^2$ at $u=x_{n-k}t$ so that
 uniformly for  $|t|<1/x_{n-1}$ and $k=1, 2,\ldots,  n$,
\[
%\frac{ \lambda_{n-k}[1-\cos (x_{n-k} t)]}{2(1-\cos x_{n-k} t)/(x_{n-k}t)^2}
\frac{ \lambda_{n-k}[1-\cos (x_{n-k} t)]}{1- \eta^{(1)}_{n,k,t}}= \frac{\lambda_{n-k} x_{n-k}^2}{2x^2_n}(x_n t)^2=\frac12  2^{-(2-\al)(2n-k)k}\lambda_n(x_n t)^2
\]
and 
\[
\eta^{(1)}_{n,k,t} = o(1)\quad \mbox{if}\quad k\neq 1\quad\mbox{ and}   \quad 0\leq \eta^{(1)}_{n,1,t} < (x_{n-1}t)^2/12.
\]
 Then for $|t|<1/x_{n-1}$,
 \beqn\label{7.2}
\sum_{x=1}^{x_{n-1}} p(x)(1-\cos x t)=\sum_{k=1}^{n} p(x_{n-k})(1-\cos x_{n-k} t) = A\ep_n\lambda_n (x_n t)^2 \{1-\eta^{(1)}_{n,1,t} +o(1)\}
 \eeqn
 with 
 \[
 \ep_n =2^{1-\al}2^{-2(2-\al)n},
 \]
for $p(x_{n-1})(1-\cos x_{n-1}t)$,  the last term of the series,   is dominant over the rest.
 On the other hand 
 \begin{eqnarray}\label{7.10}
 \mu(x)= \sum_{y>x} p(y) =\left\{ \begin{array} {ll} A\lambda_{n} + O(\lambda_{n+1}) &\qquad(x_{n-1}\leq x < x_{n})\\
 o(\lambda_n\ep_n)  &\qquad(x \geq x_{n}).
 \end{array}\right.
 \end{eqnarray}
 Thus,  uniformly for  $|t|< 1/{x_{n-1}}$,
 \beqn\label{7.21}
1-\psi(t)=t\al(t) = 2A\lambda_n\Big [ 1-\cos \, x_nt +\ep_n  (x_n t)^2(1- \eta^{(1)}_{n,1,t})\Big] +o(\lambda_n\ep_n).
 \eeqn
Also, $2A\lambda_n(1-\cos x_nt)=A \lambda_n(x_n t)^2(1-\eta^{(2)}(n,t))$ with $0\leq \eta^{(2)}\leq 1/2$ for $|t|<1/x_n$. 
Remember that
\[
\bar a(x)=\frac1{\pi}\int_0^\pi \frac{1-\cos xt}{1-\psi(t)}\, dt.
\]
We break the integral into  three parts
\beq
(\pi A)\bar a(x)&=&\bigg(\int_0^{1/x_n}+\int_{1/x_n}^{1/x_{n-1}}+\int_{1/x_{n-1}}^\pi \bigg)\frac{1-\cos x t}{[1-\psi(t)]/A}dt\\
&=&I(x)+I\!I(x)+I\!I\!I(x)\quad  \mbox{(say).}
\eeq

First we compute \textit{an upper bound of} $\bar a(x_n)$. On using the trivial inequality  $1-\psi(t) \geq 2p(x_n) (1-\cos x_n t)$ 
\[
I(x)\leq 1/2 \lambda_n x_n= x_n^{\al-1}/2.
\]
Put $r=r(n,x) = x/x_n$.  Then by   (\ref{7.21})  it follows  that for sufficiently large $n$,
\begin{eqnarray}
I\!I(x) &\leq&  \int_{1/x_n}^{1/x_{n-1}}\frac{1-\cos x t}{2\lambda_n (1-\cos x_n t)+\ep_n\lambda_n(x_n t)^2}dt \nonumber\\
&=&\frac1{\lambda_n x_n} \int_1^{x_n/x_{n-1}}\frac{1-\cos r u}{2(1-\cos u)+\ep_n u^2}du.
\label{IIx}
\end{eqnarray}
In case $x=x_n$ so that $r=1$ this leads to
\[
I\!I(x_n) \leq  x_n^{\alpha-1} \sum_{k=0}^{2^{2n}} \int_{-\pi}^{\pi} \frac{1-\cos u}{2(1-\cos u)+\ep_n(u+2\pi k)^2}du.
\]
Noting $u+2\pi k >2\pi k -\pi$ and  $\frac16u^2< 1-\cos u <\frac12 u^2$ for $-\pi<u<\pi$,  one observes
\[
\int_0^\infty dk\int_0^\pi \frac{u^2}{u^2+\ep_n k^2}du=\frac{\pi/ 2}{\sqrt {\ep_n}}\int_0^\pi udu =\frac{\pi^3/4}{\sqrt {\ep_n}}
\]
to conclude  
\[
I\!I(x_n)\leq  \frac{C_1x_n^{\al-1}}{\sqrt {\ep_n}}.
\]

For the evaluation of $I\!I\!I$ we apply  (\ref{7.21}) with $n-1$ in place of $n$ to deduce that
\beq 
\int_{1/x_{n-1}}^{1/x_{n-2}} \frac{1-\cos x_n t}{[1-\psi(t)]/A}dt &\leq& \int_{1/x_{n-1}}^{1/x_{n-2}}\frac2{2\lambda_{n-1}(1-\cos x_{n-1}t)+\ep_{n-1}\lambda_{n-1}(x_{n-1}t)^2}dt\\
&=&\frac2{\lambda_{n-1}x_{n-1}}\int_1^{x_{n-1}/x_{n-2}}\frac1{2(1-\cos u)+\ep_{n-1}u^2}du.
\eeq
The last integral is less than
\beq
&&\int_1^\pi  \frac1{u^2/3+\ep_{n-1}u^2}du+\sum_{k=1}^{2^{2n}}\int_{-\pi}^\pi \frac 1{u^2/3+\ep_{n-1}(-\pi+2\pi k)^2}du\\
&&\leq  C_1\sum_{k=1}^{2^{2n}}\frac1{\sqrt{\ep_n}\,k} \leq  C_2 \frac{n}{\sqrt{\ep_n}}.
\eeq
Since $\lambda_n x_n/\lambda_{n-1}x_{n-1} = 2^{-(\al-1)(2n-1)}$ and the remaining parts of 
$I\!I\!I$ is smaller,
\[
I\!I\!I\leq C_3n 2^{-2(\al-1)n}x_{n}^{\al-1}/\sqrt{\ep_n}.
\]
Consequently
\beqn\label{upp.bd}
\bar a(x_n) \leq  Cx_{n}^{\al-1}/\sqrt{\ep_n}.
\eeqn
\v2
\textit{ The lower bound of $\bar a(x)$}. \; Owing to (\ref{7.21}), for $n$ sufficiently large,
\[
[1-\psi(t)] /A \leq  2\lambda_n(1-\cos x_nt) + 4\ep_n\lambda_n(x_n t)^2 \quad \mbox{for} \;\;1/x_n\leq   |t|< 1/x_{n-1}.
\]

Let $\frac12  < x/x_n \leq 1$ and put $y=x_n-x$ and $b=y/x_n$ $(\geq 2)$. Then 
\beqn\label{IIc}
\frac{I\!I(x)}{x_n^{\alpha -1}} \geq 
\int_{0}^{x_n/x_{n-1}} \frac{1-\cos (1-b)u }{2(1-\cos u)+ 3\ep_n u^2}du.
\eeqn
The RHS is bounded from below by
$$ \sum_{k=1}^{2^{2n-1}/2\pi}
\int_{-\pi}^\pi \frac{1-\cos\big[(1-b)u-2\pi b k \big] }{u^2+ 3\pi^2\ep_n  (2k+1)^2}du.
$$
In case  $b \leq \sqrt{\ep_n}= 2^{(1-\alpha)/2} 2^{-(2-\alpha) n}$, restricting the summation to $k<1/ \sqrt{\ep_n}$, one sees that
\beqn\label{IIa}
 \frac{I\!I(x)}{x_n^{\alpha -1}}  \geq C \sum_{k=1}^{1/\sqrt{\ep_n}}\int_{0}^\pi \frac{u^2 + b^2k^2}{u^2 +\ep_n k^2} du\geq \frac{C\pi/2}{\sqrt{\ep_n}} \quad\mbox{if}\quad 1- \sqrt{\ep_n}\leq \frac{x}{x_n} \leq 1.
 \eeqn
For $b\geq \sqrt{\ep_n}$, we restrict the summation to the intervals  $(\nu + \frac14)/b \leq  k    \leq (\nu+\frac34)/b$, $\nu=0,1, 2, \ldots, \big\lfloor 2^{(2-\alpha)n}b/2 \big\rfloor$ to see that
%if  $1/2 \leq {x}/{x_n} \leq 1- \sqrt{\ep_n}$,
\beqn\label{IIb}
\frac{I\!I(x)}{x_n^{\alpha -1}}  \geq C \sum_{k= 1/4b}^{1/\sqrt{\ep_n}} \int_{-\pi /2}^{\pi/2} \frac{du}{u^2+\ep_n k^2} 
\sim C\frac{\log [4b/\sqrt{\ep_n}\,]}{\sqrt{\ep_n}} \quad\mbox{if}\quad  \frac12 \leq \frac{x}{x_n} \leq 1- \sqrt{\ep_n}.
\eeqn
This especially shows that if $0<\de< 2-\alpha$,  $\bar a(x) \geq Cn x_n^{\alpha-1}/\sqrt{\ep_n}$ for $\frac12 x_n < x/x_n <1-2^{-\de n}$.

Let $1\leq j < (2-\alpha)n +1$ and $2^{-j-1} <x/x_n \leq 2^{-j}$. Put 
$$x_n^{(j)} = 2^{-j}x_n,\;\;  y=x_n^{(j)} - x \quad \mbox{and} \quad b= y/x_n^{(j)}.$$
 Then we have  (\ref{IIc})  with $(1-b)$ replaced by $2^{-j}(1-b)$  and making the changes of  variable   $u= s+2\pi(2^{j-1} +k)$ in  (\ref{IIc}) with $k=0,1, \ldots, $ we  infer that
$$\frac{I\!I(x)}{x_n^{\alpha-1}} \geq c_1\sum_{k=0}^{[2^{2n}-2^j]/4\pi} 
 \int_{-\pi}^\pi \frac{1-\cos  \theta_{j,y}(s,k) }{2(1-\cos s) +3\ep_n\big[2\pi(2^{j}+k)\big]^2}ds$$
where $\theta_{j,y}(s,k) = 2^{-j}(1-b)s + \pi + 2\pi \big[2^{-j}(1-b)k - \frac12b\big]$.  Noting $b < \frac12,$ we see that
 $$\cos\theta_{j,y}(s,k) \leq \frac1{\sqrt 2}
 \quad\mbox{if} \quad |s|<\frac{2^j\pi}{4} \quad \mbox{and}\;\;  \nu - \frac14  \leq \frac{(1-b)}{2^{j}}k -\frac12 b \leq \nu +\frac14 \;\; $$
 for $\nu = 0, 1, 2, \ldots $ and then find that the sum above is lager than a constant multiple of
 \begin{eqnarray}
 \sum_{k=1}^{2^{2n}/5\pi} 
 \int_0^1 \frac{ds}{s^2+\ep_n (2^j+k )^2} 
 &\sim&\sum_{k=2^{j-1}}^{2^{2n}/5\pi}  \frac{1}{\sqrt{\ep_n}\,k} \arctan \frac1{\sqrt{\ep_n}\,k} \\
 &\geq& \sum_{k=2^{j-1}}^{1/\sqrt{\ep_n}}  \frac{1/2}{\sqrt{\ep_n}\,k}\sim \frac{(2-\alpha)n -j +1}{[2/\log 2] \sqrt{\ep_n}} .
 \end{eqnarray}
This together with (\ref{IIa}) and (\ref{IIb}) entails  (\ref{Ex10}).

  In the present example,   $c(x)/m(x)$ oscillates between $0$ and $1$. 
 %and $\bar a(x)$ does not behave like $x/m(x)$. 
 Indeed without difficulty one can see
 that for $x_{n-1}\leq x\leq x_{n}$,
 \[
 c(x)/A \sim {\textstyle \frac12}(x^2-x_{n-1}^2)\lambda_{n} + {\textstyle \frac12} x_{n-1}^2\lambda_{n-1},
 \]
 \[
 x\eta(x)/A \sim x(x_{n}-x)\lambda_{n} +x x_{n+1}\lambda_{n+1},
 \]
 and
 \beqn\label{m_asymp}
 m(x)/A \sim  x(x_{n}- {\textstyle \frac12} x)\lambda_{n} + {\textstyle \frac12}x_{n-1}^2\lambda_{n-1} \asymp \left\{ \begin{array} {ll}
 x x_n^{1-\alpha} \quad & x> \ep_n x_n\\
 (x_{n-1})^{2-\alpha} \quad  & x< \ep_n x_n
 \end{array} \right.
 \eeqn
and then  that
 $x\eta(x)/c(x)$ tends to zero for $x$ with  $1- o(1)< x/x_{n-1}<1+  o(2^{2(\al-1)n})$  and diverges to infinity for $x$ satisfying  $2^{2(\al-1) n}x_{n-1} <\!< x <\!< x_n$.
 From (\ref{7.21}) one infers that $\al(t)/[tc(1/t)]$ oscillates between $8$ and $4\ep_n t^2 x_n^2\{1-o(1)\}$ about $M/2\pi$ times when $t$ ranges over the interval $[1/x_n, M/x_n]$ and that $\al(2\pi/x_n)/\al(\pi/x_n) =O(\ep_n)$.

This example also exhibits  that the converse of Theorem \ref{th:1_2}(ii) is not true. 
It holds that   $\liminf \alpha(t)/\eta(1/t)=0$ and $\limsup \alpha(t)/\eta(1/t) \geq \limsup \alpha(t)/tm(1/t) >0$. On the other hand
\beqn\label{AP0}
\lim \frac{\bar a(x)}{x/m(x)} =\infty \quad if \;\; \alpha< 4/3
\eeqn
as is verified below. Thus the condition $\lim \alpha(t)/tm(1/t) =0$ is not necessary for the ratio $\bar a(x)/[x/m(x)]$ to diverge to infinity.
[For $\alpha > 4/3$  the ratio  $\bar a(x)/[x/m(x)] $  is bounded along the sequence $x_n^* =\ep_n x_n$---as one may easily show.]
For verification of (\ref{AP0}) we  prove
\beqn\label{III}
\bar a(x) \geq c_1(x_{n-1})^{\alpha-1}/\sqrt{\ep_n} \quad  \mbox{for}\quad x\geq x_{n-1}.
\eeqn
Observe that  $\alpha < 4/3$ entails---in fact, is equivalent to---each of the relations $x_n^{\alpha-1} = o(x_{n-1}^{\alpha-1}/\sqrt{\ep_n})$ and  $\ep_n x_n = o(x_{n-1}/\sqrt{\ep_n})$. Then,  comparing the above lower bound of $\bar a(x)$  to the asymptotic form of  $x/m(x)$ obtained from (\ref{m_asymp}) ensures the truth 
of (\ref{AP0}).  
For the proof of (\ref{III})  we evaluate $I\!I\!I(x)$. Changing the variable $t=u/x$ we have
$$I\!I\!I(x)\geq  \frac1{3\lambda_{n-1} x}\int_{x/x_{n-1}}^{x/x_{n-2}} \frac{(1-\cos u)du}{1-\cos (x_{n-1} u/x) 
+ \ep_{n-1} (x_{n-1}u/x)^2 }.$$
%\geq \frac1{3\lambda_n x}\int_{x_{n-2}/x}^{x_{n-1}/x} \frac{du}{(x_{n-1} /x)^2}
The integral on the RHS is easily evaluated to be larger than 
$$\frac14\int_{x/x_{n-1}}^{x/x_{n-2}} \frac{du}{1+ \big[\sqrt{\ep_n}(x_{n-1}/x)u\big]^2} 
=\frac{x/x_{n-1}}{4\sqrt{\ep_n}} \int_{\sqrt{\ep_n}}^{2^{2n-3}\sqrt{\ep_n}}\frac{ds}{1+s^2}\sim \frac{\pi x}{8x_{n-1}\sqrt{\ep_n}},
$$
 showing (\ref{III}) since  $ \bar a(x)\geq  I\!I\!I(x)/(\pi A)$.

% if $x= x_n2^{-j} $ ($3\leq j<\!< n$), then  $\bar a(x)>\!> x_n^{\al-1}/\sqrt{\ep_n}$ by (\ref{a/m}) while  $Ax/m(x)\sim x_n^{\al-1}$.
 
% \begin{remark}\label{rem4} Let $p(x)$ be as above and $F(x)$ be such that 
% $m_+(x)/m(x)\to 0$ and $F(-x) = b\sum_{k\leq x} p(k)$  with  a positive constant  $b$ (chosen so that  $\int_{-\infty}^\infty F(y)dy=0$). Then  the contribution to $\bar a(x)$ from  $F(-x)$ is negligible (see (\ref{b/a})), and $\bar a(x)$, which is $\sim a(-x)$ by Theorem \ref{th:2}, satisfies (\ref{Ex10}). \end{remark}

\section{Appendix}

{\bf (A)} Here we  present two results derived from the standard facts about the s.v. functions. 
The first and  second ones  are used  for the proof of Theorem \ref{Thm8} and for the deduction of  Corollary \ref{Cor9}(i) from Theorem \ref{Thm8}, respectively.
\v2
(1) {\it Let $L(x)> 0$ be a s.v. function at infinity. The following are equivalent
\v2
\;\;\; {\rm (a)} \quad $m(x)\sim L(x)/2$.
\v2
\;\;\; {\rm (b)} \quad $x\eta(x) = o\big(m(x)\big)$ %\;\;\; \big[$\Longleftrightarrow$ \; (b$'$)\quad $c(x) \sim m(x)$\big].
\v2
\;\;\; {\rm (c)} \quad $c(x)\sim L(x)/2$.
\v2
\;\;\; {\rm (d)} \quad  $x^2\mu(x)\big/\int_{-x}^x t^2dF(t) \to 0$.
\v2
\;\;\; {\rm (e)} \quad $ \int_{-x}^x t^2dF(t) \sim L(x)$.}
\v2
\pf\;  The implication  (a) $\Rightarrow$  (b) follows from  $\frac12 x\eta(x)\leq m(x)- m(\frac12 x)$ and  its converse  from $m(x) = m(1)e^{\int_1^x \ep(t)dt/t}$, where $\ep(t)
=t\eta(t)/m(t)$. Combining (a) and (b) shows (c) in view of   $m(x) = c(x)+x\eta(x)$; conversely   (c)  entails $x^2\mu(x) = o(L(x))$ by which  one deduces  $x\eta(x) \ll x\int_x^\infty L(t)t^{-2}dt\sim L(x)$. Thus  (b) $\Leftrightarrow (c)$.     The equivalence of (d) and (e)  follows from   Theorem VIII.9.2 of \cite{F} and shows  the equivalence of   (e) and   (c)  since $\int_{-x-}^{x+} t^2dF(t) = -x^2\mu(x) + 2c(x)$.  \qed
%Finally  $(a)+ (b')$ implies  $c(x)\sim L(x)$ by which we deduce as above that $x^2\mu(x) = o(c(x))$, hence that   (e) holds. \qed

\v2
(2)  Let  $m_+/m\to 0$. Then 
 by $\int_0^x t^2dF(t) \leq 2c_+(x) =o(m_-(x))$ the slow variation of  $m_-(x)$ entails  $\int_{-x}^x t^2 dF(t)\sim m_-(x)$ in view of the equivalence  (a) 
 $\Leftrightarrow$ (c) (applied with $m_-$ in place of $m$ in (1)). 
 \v2\noindent
%%% (B)
{\bf (B) }\; 
Let $T_0=0$ and $T= (T_n)_{n=0}^\infty$ be a r.w. on  $\{0,1,2,\ldots\}$  with  i.i.d.   increments. Put
$$u_x = \sum_{n=0}^\infty P[T_n=x]\;\;  (x = 1,2,\ldots), \quad  L(t) =\int_{0}^t P[T_1 > s]ds  \;\;  (t\geq 0)$$
and suppose that $T_1$ is aperiodic so that  $u_x$ is positive for all sufficiently large $x$. 
Erickson  \cite[\S 2(ii)]{Ec} shows that $\lim u_x L(x) = 1 $ if  $tP[T_1>t]$  is  s.v. at infinity.   
 %the use of this assumption  can be avoided by employing the sine transform instead of the cosine transform in the proof given in  \cite{E0}, 
This restriction on $T_1$ is relaxed in  the following lemma,  which is used to obtain  (\ref{u/ell}). %relation (3) in  case $\al=2$ in Remark \ref{rem1.4}.
  %Lemma
\begin{lemma} \label{lemA}\;   If $L$ is s.v. at infinity, then
$u_x \sim 1/L(x)$ as $x\to\infty$.
\end{lemma}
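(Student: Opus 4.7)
The plan is to proceed in two stages: first establish the averaged asymptotic $U(x):=\sum_{y\le x}u_y\sim x/G(x)$ by an Abelian–Tauberian argument, and then bootstrap to the pointwise statement by exploiting the aperiodicity hypothesis. The main obstacle is the second stage, since $u_x$ is not monotone in $x$ and the Karamata Tauberian theorem does not apply directly.

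For the Tauberian step, set $\hat F(\lambda)=E[e^{-\lambda T_1}]$ and observe, by integration by parts, that $1-\hat F(\lambda)=\lambda\int_0^\infty e^{-\lambda t}P[T_1>t]\,dt$. Since $G$ is slowly varying with $G(t)\to\infty$, Karamata's Abelian theorem yields $1-\hat F(\lambda)\sim\lambda\,G(1/\lambda)$ as $\lambda\downarrow 0$, hence $\sum_x u_x e^{-\lambda x}=[1-\hat F(\lambda)]^{-1}\sim[\lambda\,G(1/\lambda)]^{-1}$, which is regularly varying of index $1$ at the origin. Since $U$ is non-decreasing, the Karamata Tauberian theorem gives $U(x)\sim x/G(x)$, and the slow variation of $G$ upgrades this to the block estimate $U(x+h)-U(x-h)\sim 2h/G(x)$ for any $h=h(x)\to\infty$ with $h=o(x)$.

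For the pointwise step, I would establish the two inequalities $\limsup_{x\to\infty}u_x G(x)\le 1$ and $\liminf_{x\to\infty}u_x G(x)\ge 1$ by combining the block estimate with a local smoothness property. Two ingredients are needed. First, a uniform a priori upper bound $u_x\le C/G(x)$, derived from the renewal equation $u_x=\delta_{x,0}+\sum_k P[T_1=k]\,u_{x-k}$ by a block/subadditivity argument calibrated to the norming sequence $b_n$ for positive relative stability of $T_1$ (the hypothesis that $G$ is slowly varying is equivalent to such relative stability, with $b_n$ characterized by $b_n\sim n\,G(b_n)$). Second, a local smoothness estimate asserting that $u_y/u_x\to 1$ uniformly for $|y-x|\le h$ as $x\to\infty$, for any $h=o(x)$ with $G(x+h)\sim G(x)$; this is exactly where the aperiodicity hypothesis enters essentially, and can be extracted from the renewal equation together with the fact that a genuinely aperiodic renewal sequence cannot oscillate on mesoscopic scales.

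With these ingredients in hand, one squeezes $2h\cdot\min_{|y-x|\le h}u_y\le U(x+h)-U(x-h)\le 2h\cdot\max_{|y-x|\le h}u_y$, uses smoothness to replace both the $\min$ and $\max$ by $u_x(1+o(1))$, divides by $2h$, and invokes the block asymptotic to conclude $u_x\sim 1/G(x)$. The delicate point — and the hard part of the argument — is the local smoothness estimate: under the present hypothesis \emph{only} on the integrated tail $G$, as opposed to on the tail $P[T_1>t]$ itself (which is the stronger setting of Erickson's classical renewal theorem cited above), one must verify that the renewal sequence stays smooth on the relevant scale. This is where I expect to spend most of the work, and it is presumably the reason the author isolates the lemma in an appendix rather than citing Erickson directly.
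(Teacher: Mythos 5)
Your Tauberian step is sound: under aperiodicity the generating function identity $\sum u_x s^x = (1-E s^{T_1})^{-1}$ together with $1-\hat F(\lambda)\sim \lambda G(1/\lambda)$ does yield $U(x)=\sum_{y\le x}u_y \sim x/G(x)$ by Karamata, and the block estimate $U(x+h)-U(x-h)\sim 2h/G(x)$ for $h=o(x)$, $h\to\infty$, follows from slow variation of $G$. But the second stage has a genuine gap at precisely the point you flag as ``the hard part.'' The local smoothness claim --- $u_y/u_x\to1$ uniformly for $|y-x|\le h$ --- is not a consequence of aperiodicity plus the renewal equation in any obvious way; asserting that ``a genuinely aperiodic renewal sequence cannot oscillate on mesoscopic scales'' is essentially the content of a strong renewal theorem, i.e.\ the thing being proved. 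Indeed the entire literature on strong renewal theorems for infinite-mean walks (Garsia--Lamperti, Erickson, and the more recent work closing the $0<\alpha\le 1/2$ case) exists exactly because the passage from the averaged asymptotic $U(x)\sim x/G(x)$ to the pointwise one $u_x\sim 1/G(x)$ can \emph{fail} without extra hypotheses, and when it holds it is never by a soft smoothness argument. Likewise the a priori bound $u_x\le C/G(x)$ is plausible but is not established by the block/subadditivity sketch you give. As written, the crux has been named rather than proved.

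The paper's argument is of a quite different character and entirely bypasses local smoothness of the sequence $(u_x)$. It works on the Fourier side: writing $u_x=\tfrac{2}{\pi}\int_0^\pi S(\theta)\sin x\theta\,d\theta$ with $S(\theta)=\Im(1-\phi(\theta))^{-1}$ (a sine-series inversion whose validity is itself justified via the Herglotz-type representation, cf.\ the remark after the lemma), and using $1-\phi(\theta)=-i\theta G(1/\theta)\{1+o(1)\}$, the integral is split at $\theta=B/x$. The low-frequency part $J_1$ gives the main term $1/G(x)$ as $B\to\infty$. The high-frequency part $J_2$ is killed not by smoothness of $u$ but by oscillation of $\sin x\theta$: shifting by a half-period and using a Lipschitz bound $|\phi(\theta)-\phi(\theta')|\le 2|\theta-\theta'|G(1/|\theta-\theta'|)$ (Erickson's Lemma 5) shows $J_2=O(1/BG(x))$. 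This is the mechanism your sketch is missing; it is what replaces the unproven ``mesoscopic non-oscillation'' and is what makes the lemma true without assuming slow variation of $tP[T_1>t]$. If you want to repair your route, you would need to supply a proof of the local estimate comparable in strength to this Fourier cancellation, which is hard to do directly on the sequence side.
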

\v2\noindent
\pf \, We follow the argument made by Erickson \cite{Ec}. 
 Let $\phi(\theta) = E\exp \{i\theta T_1\}$.  Unlike \cite{Ec} we take up the sine series of coefficients $u_x$ that represents the imaginary part of $1/(1-\phi(\theta))$.  Suppose $ET_1=\infty$, otherwise the result being the well-known renewal theorem.  Fourier inversion  yields 
\beqn\label{F_invers}
u_x = \frac{2}{\pi} \int_0^\pi S(\theta)  \sin x\theta d\theta, \quad \mbox{where} \quad S(\theta) =  \Im \bigg(\frac{1}{1-\phi(\theta)}\bigg).
\eeqn 
where the integral is absolutely convergent as is verified shortly.  The proof of this representation of $u_x$  will be given after  we derive the assertion  of the lemma  by  taking it for granted. 
 The assumed slow variation of $L$  implies---in fact equivalent to---each of %$\int_0^t  sP[T_1 \in ds] \sim G(t)$and hence to 
%\beqn\label{lemAB1}
\[\int_0^t  sP[T_1 \in ds] \sim L(t)\quad\mbox{and} \quad   
tP[T_1>t]/ L(t) \; \to \; 0\quad (t\to\infty)
\]%\eeqn
(cf.  \cite[Theorem VIII.9.2]{F},  \cite[Corollary 8.1.7]{BGT}).  
Using this we observe that as $\theta \downarrow 0$,  $\int_0^{\ep /\theta}t P[T_1 \in dt] \sim L(1/\theta)$ for each  $\ep>0$ and hence
\begin{eqnarray} \label{AB2}
1-\phi(\theta) 
&=&\bigg[\int_0^{1/\theta}+\int_{1/\theta}^\infty \bigg](1- e^{i \theta t})P[T_1\in dt] \nonumber\\
%&=& -i \theta G(1/\theta)\{1+o(1)\}+ O(P[T_1>1/\theta]) \nonumber \\
&=& -i  \theta L(1/\theta)\{1+o(1)\}.
\end{eqnarray}
In particular  $S(\theta)\sin x\theta $ is summable on $(0,\pi)$ as mentioned above.  % the integral in   (\ref{F_invers}) is absolutely convergent. 
%Since for $0<\th \leq \pi$,  $1-\phi(\th)\neq 0$ and  $|\phi(\th)|< 1$ except for a finite number of points, it follows that $\int_0^\pi [(1-\phi^n)/(1-\phi)](\th)\sin x\th\,d\th \to 0$ ($n\to\infty$)  for each $x$,  which entails (\ref{F_invers}).

Decomposing 
 $u_x = \frac{2}{\pi} \int_0^{B/x} + \frac{2}{\pi} \int_{B/x}^\pi =J_1+J_2,$  we  deduce from (\ref{AB2}) that
 \beqn\label{AB3}
 J_1 = \frac{2}{\pi}  \int_0^{B}  \frac{\sin u\, du}{u G(x/u)}\{1+o(1)\}
 \eeqn
with $o(1)\to 0$ as  $x\to\infty$ for each $B> 1$. On the other hand
\beq
 \pi J_2 &=& \int_{B/x}^\pi \Big[S(\theta) - S\Big(\theta +\frac \pi{x}\Big)\Big]\sin x\theta \,d\theta + \bigg(\int^{(B+\pi)/x}_{B/x} -  \int_\pi^{\pi+\pi/x}\bigg)S(\theta)\sin x\theta \,d\theta\\
 &=& J_2' + J_2'' \quad (\mbox{say}).
\eeq
 With the help of  
$$|\phi(\theta) -\phi(\theta')| \leq 2|\theta-\theta'|G(1/|\theta-\theta'|) \quad (\theta\neq \theta')$$
(Lemma 5 of \cite{Ec})  the same proof as given in \cite[(5.15)]{Ec} yields the  bound  $J_2'\leq C'/BG(x)$. By the aperiodicity  of $T_1$ and  (\ref{AB2})  $|S(\theta)|\leq C/\theta G(1/\theta)$ ($0 < \theta\leq \pi $) and it is easy to see that  $ |J_2''|\leq C'' [B/x + 1/BG(x/B)]$. Thus $\lim_{x\to\infty} G(x)J_2 \leq C'/B$.  Since  (\ref{AB3}) implies that    $ G(x)J_1\to 1$
as $x\to\infty$ and $B\to\infty$ in this order,  we conclude $G(x)u_x \to 1$ as desired. \qed

\v2
%proof of  (\ref{F_invers})
{\it Proof of  (\ref{F_invers}).}  The corresponding \lq cosine formula', namely  $u_n= \frac2{\pi}\int_0^\pi C(\theta)\cos x\theta\,d\theta$, where $C(\theta):= \Re(1-\phi(\theta))^{-1}$,  is applied in \cite{GL} and \cite{Ec} without proof. In  \cite{GL} is cited the article \cite{Hrg}  which  proves   the Herglotz representation  theorem (named after its author)  of the analytic functions in the unit open disc with non-negative real parts. Noting  $\Re(1-\phi(\theta))^{-1}\geq 0$ ($|z|<1$)    
  one can easily  obtain  the cosine formula by deducing  the summability of   $C(\theta)$   from the representation theorem.  The same argument does not go through  for (\ref{F_invers}), $S(\theta)$ being not always  summable.  An elementary proof of the cosine formula is given in \cite[p.98-99]{S} (cited in \cite{Ec}) and this is easily modified to obtain (\ref{F_invers}) as given below.

 Put for $1/2 <r<1$
$$w_r(\theta) = \frac1{2\pi} \sum_{n=0}^\infty r^n \phi^n(\theta)= \frac{1}{2\pi(1- r\phi(\theta))}.$$
 Noting  $P[T_n =x] = (2\pi)^{-1}\int_{-\pi}^{\pi} \phi^n(\theta)e^{-ix\theta}d\theta$ and   $u_x = \lim_{r\uparrow 1} \,\sum_{n=0}^\infty r^n P[T_n =x]$ for all  $x\in \mathbb{Z}$ as well as  $u_{-x}=0$ for $x\geq 1$ one deduces  that
$$u_x = u_x- u_{-x} = \lim_{r\uparrow 1} \int_{-\pi}^\pi w_r(\theta) (-i2\sin x\theta)d\theta.$$
Since $\Re w_r(\theta)$ is even and $\Im w_r(\theta)$ is odd   we see  that the  limit  above equals 
$$ 4\lim_{r\uparrow 1} \int_{0}^\pi \lim_{r\uparrow 1} \Im w_r(\theta) \sin x\theta\, d\theta = \frac{2}{\pi}\int_0^\pi S(\theta)\sin x\theta \,d\theta.$$ 
Here the equality is justified by the bound   $|\Im w_r(\theta) \leq 1/\big[2\pi r |\Im \phi(\theta)| \big]\leq 1/\theta G(1/\theta)$ for   $\theta>0$ small enough that follows from  (\ref{AB2}).
 Thus (\ref{F_invers}) is obtained.
 
  \v2\noindent
%%% (B)
%{\bf (C) }\; Here we verify  that if  $\lim m_+/m \neq 1/2$ then  (\ref{a_rep}) holds:
%beqn\label{a_rep0}]a(x)= \frac1{2\pi}\int_{-\pi}^\pi \Re\frac{1- e^{ixt}}{1-\psi(t)}dt.\]


\begin{thebibliography}{99}
\baselineskip=9pt
%\bibitem{A-D}  L. Alili and   R. Doney, Wiener-Hopf factorization revisited and some applications, Stochastics Stochastics Rep. {\bf 66} (1999), 87-102.

\bibitem{Bt} J. Bertoin, L\' evy Processes, Cambridge Univ. Press, Cambridge  (1996).

%\bibitem{BDP} V. Bernyk, R. C. Dalang  and G. Peskir, The law of the supremum of a stable L\' evy process with no negative jumps, Ann. Probab. {\bf 36}(5), (2008), 1777-1789.

%\bibitem{B}   N. L.  Bingham, Limit theorems in fluctuation theory, Adv. Appl. Prob. {\bf 5} (1973), 554-569.
%\bibitem {B}  S. Bochner, Lectures on Fourier integrals, Ann. Math. Studies, No 42, Princeton Univ. Press. (1959) 

\bibitem {Belk}  B. Belkin, An invariance principle for conditioned recurrent random walk attracted to a stable law, Zeit. Wharsch. Verw. Gebiete {\bf 21}  (1972), 45-64.


\bibitem {BGT} N.H. Bingham, G.M. Goldie and J.L. Teugels, Regular variation,  Cambridge Univ. Press,  Cambridge, 1989.

%\bibitem {B1}  B. Belkin, A limit theorem for conditioned recurrent random walk attracted to a stable law, Ann. Math. Statist., {\bf 42} (1970), 146-163.

%\bibitem {B2}  B. Belkin, An invariance principle for conditioned recurrent random walk attracted to a stable law, ZFW {\bf 21}  (1972), 45-64.
  
%\bibitem {Ch} K. L. Chung, A Course in Probability Theory, 2nd ed. Academic Press (1970).

\bibitem {Cho} Y. S. Chow, On the moments of ladder height variables, Adv. Appl. Math. {\bf 7} (1986), 46-54.

\bibitem {BD} J. Bertoin and R. A. Doney, Spitzer's condition for random walks and
L\' evy processes, Ann. Inst. Henri Poincar\' e, {\bf 33} (1997), 167-178

%\bibitem {D} R. A. Doney, Spitzer's condition and ladder variables  for random walks,  Probab. Theor. Rel. Fields, \textbf{ 101} (1995), 577-580. 

%\bibitem {D}  R.A. Doney, The strong renewal theorem with infinite mean via local large deviations. (preprint: http:// available at arXiv:1507.06790.) %(2015)

\bibitem {DM}  R.A. Doney and R. A. Maller, The relative stability of the overshoot for L\' evy processes, Ann. Probab. \textbf{ 30} (2002), 188-212.

\bibitem {D}  R.A. Doney, A note on a condition satisfied by certain random walks, J. Appl. Probab. {\bf 14}, (1977), 843-849.

%\bibitem {D1} R. A. Doney, On the existence of the mean ladder height for random walk,  Z. Wahrsch. Verw. Gebiete {\bf 59} (1982), 373-382. 

%\bibitem {D} R.A. Doney, Local behaviour of first passage probabilities, Probab. Theor. Rel. Fields, {\bf 152}, (2012), 559-588. 

%\bibitem {DS} R.A. Doney and M. S. Savov, The asymptotic behavior of densities related to the supremum of a stable process.

%\bibitem {D2} R.A. R.A. Doney, One-sided local large deviation and renewal theorems in the case of infinite mean, Probab. Theory Rel. Fields {\bf 107} (1997), 451-465.

%\bibitem {D2} R. A. Doney, Moments of ladder heights for random walks,  J. Appl. Probab. {\bf 17} (1980), 248-252. 

\bibitem {D_L} R. A. Doney, Fluctuation theory for L\'evy processes,   Lecture Notes in Math. 1897 (2007). Springer, Berlin.

\bibitem {Em}  D. J. Emery,  On a condition satisfied by certain random walks,   Zeit. Wharsch. Verw. Gebiete  {\bf 31}, (1975),  125-139.

%\bibitem {Em}  D. J. Emery, Limiting behaviour of the distributions of the maxima of partial sums of certain random walks, J. Appl. Prob. {\bf 9} (1972), 572-579.

\bibitem {Ec} K.B. Erickson, Strong renewal theorems with infinite mean, Trans. Amer. Math. Soc. \textbf{ 151} (1970), 263-291.

\bibitem {E1} K. B. Erickson, The strong law of large numbers when the mean is undefined, TAMS. {\bf 185} (1973), 371-381.

\bibitem{Hrg} G. Herglotz, \"Uber Potenzreihen mit positivem reellen Teil im Einheitskreis, Ber. Verh.  S\" achs. Akad. Wiss. Leipzig. Math. Nat. Kl.  {\bf 63} (1911) %Leipziger Berichte,

\bibitem {GL} A. Garsia and J. Lamperti, A discrete renewal theorem with infinite mean, Comment. Math. Helv. {\bf 37} (1962/3), 221-234. MR {\bf 26} $\sharp$5630.

%\bibitem {E} A. Erd\'elyi,  Tables of Integral Transforms, vol. I, McGraw-Hill, Inc. (1954)

\bibitem{F} W. Feller, An Introduction to Probability Theory and Its Applications, vol. 2, 2nd edn.  John Wiley and  Sons, Inc. NY. (1971)

\bibitem{GM}  P. Griffin and T. McConnell, Gambler's ruin and the first exit position of random walk from large spheres, Ann. Probab. {\bf 22} (1994), 1429-1472.

\bibitem{GOT} P. E. Greenwood, E. Omey and J.I Teugels, Harmonic renewal measures, Z. Wahrsch. verw. Geb.  \textbf{  59} (1982), 391-409.

%\bibitem {GL} A. Garsia and J. Lamperti, A discrete renewal theorem with infinite mean, Comment. Math. Helv. {\bf 37} (1962/3), 221-234. MR {\bf 26} $\sharp$5630.

%\bibitem {GK} V. Gnedenko and A. N. Kolmogorov, Limit distributions for sums of independent random variables, Addison-Wesley, Reading, Mass.  (1954) [Russian original 1949]

%\bibitem {Go} S. Gou\"ezel, Correlation asymptotics from large deviations in dynamical systems with infinite measure, Colloquium Mathematicum {\bf 125} (2011), 193-212.

%\bibitem {SS} S. Sou\"ezel, Correlation asymptotics from large deviations in dynamical systems with infinite measure, Collocuium Mathematicum {\bf 125} (2011), 193-212.

%\bibitem {IL} I. A. Ibragimov and Yu. V. Linnik, Independent and stationary sequences of random variables, Wolters-Noordhoff Publishing Groningen, The Netherland. (1971).

\bibitem {K1} H. Kesten,  On a theorem of Spitzer and Stone and random walks with absorbing barriers,
  Illinois J. Math. {\bf 5} (1961), 246-266. 



\bibitem {K2} H. Kesten, Random walks with absorbing barriers and Toeplitz forms, Illinois J. Math. {\bf 5} (1961), 267-290. 

\bibitem {K3} H. Kesten,  Ratio limit theorems II, Journal d'Analyse Math. {\bf 11}, (1963), 323-379.

%\bibitem {K2} H. Kesten, Sums of independent random variables---without moment conditions, Ann. Math. Stat. {\bf 43} (1972), 701-732.


\bibitem {K4} H. Kesten, Problem 5716, Amer. Math. Monthly {\bf 77} 197 (1970)

\bibitem {K} H. Kesten, The limit points of a normalized random walk. Ann. Math. Statist. \textbf{ 41} (1970) 1173-1205.

\bibitem {KM0} H. Kesten and R. A. Maller, Stability and other limit laws for exit times of random walks from a strip or a half line, Ann. Inst. Henri Poincar\'e, \textbf{ 35} (1999), 685-734.
 
 \bibitem {KM} H. Kesten and R. A. Maller, Infinite limits and infinite limit points of random walks and trimmed sums,  Ann. Probab., \textbf{ 22} (1994), 1473-1513.
 
 
% \bibitem {M}  R. A. Maller,  Relative stability and the strong law of large numbers,  Z. Wahrsch. verw. Geb. {\bf 43} (1978), 141-148.

\bibitem {M}  R. A. Maller,  Relative stability, characteristic functions and stochastic compactness, J. Austral. Math. Soc. (Series A) {\bf 28} (1979), 499-509.
 
 %\bibitem {P} G. Peskir, The law of the hitting times to points by a stable L\' evy process  with no negative  jumps, Electronic Commun. Probab. {\bf 13} (2008), 653-659.
 
 \bibitem {Pt} E.J. G. Pitman, On the behaviour of the characteristic function of a probability distribution in the neighbourhood of the origin, JAMS(A)  \textbf{ 8}  (1968), 422-43.
 
%\bibitem {PS}  S.C. Port and C. J. Stone,   Infinitely divisible processes and their potential theory  I \&  II,  Ann. Inst. Fourier {\bf 21} (1971), Fasc. 2, 157-275 \& Fasc 4. 179-265. 
 
 \bibitem {R} B.A. Rogozin,  On the distribution  of the first ladder moment and height and fluctuations of a random walk, Theory Probab. Appl. \textbf{ 16} (1971), 575-595.
 
   %\bibitem {R76} B.A. Rogozin,  Relatively stable walks, Theory Probab. Appl. {\bf 21} (1976), 375-379.


 
%\bibitem {RW} L. C. G. Rogers and D. Williams, Diffusions, Markov processes and martingales, vol. 1, 2nd ed. Cambridge university press (2000).  

%\bibitem {Sl} M. L. Silverstein, Classification of coharmonic and coinvariant functions for a L\'evy process. Ann. Probab. {\bf 8} (1980), 539-575. 

%\bibitem {Sk} K. Sato, L\' evy processes and infinitely divisible distributions, Cambridge Univ. Press, Cambridge  (1999).

\bibitem {S} F. Spitzer, Principles of Random Walks, Van Nostrand, Princeton, 1964. 

%\bibitem {T}  E. C. Titchmarsh,  The theory of functions, 2nd ed. Oxford, 1939. 

%\bibitem {U}    K. Uchiyama, Asymptotic estimates of Green's functions
%and transition  probabilities for Markov additive processes. Elec. J. Probab. {\bf 12}  (2007), 138-180. 


%\bibitem{U1dm}  K. Uchiyama,  One dimensional lattice random walks with absorption  at a point / on a half line.  J. Math. Soc. Japan, {\bf 63} (2011), 675-713.
%{Us}  K. Uchiyama, The hitting distributions of a line for two dimensional Random Walks, (2007),  http://www.math.titech.ac.jp/~tosho/Preprints/index-j.html.

%\bibitem{U2}  K. Uchiyama, The hitting distributions of a half real line for two dimensional random walks.  Ark. Mat., vol.48(2010), 371-393..

%\bibitem {Uht}   K. Uchiyama, The hitting time   of  a single point for   random walks.  (2008),\newline http://www.math.titech.ac.jp/~tosho/Preprints/index-j.html.

%\bibitem{U3}  K. Uchiyama,  Green's functions of random walks on the upper half plane, Tohoku Math. Journ., 2nd series, {\bf 66}, no.2, (2014) 289-307 

\bibitem {Unote}  K. Uchiyama,   A note on  summability of  ladder heights 
  and the distributions  of  ladder epochs
  for  random walks,  Stoch. Proc.  Appl. \textbf{ 121} (2011), 1938-1961.
  
  
%\bibitem {U}  K. Uchiyama,   A criterion for summability of  ladder heights 
%by means of  the potential function, (2011),  http://www.math.titech.ac.jp/~tosho/Preprints

 
  %\bibitem{U1dm_f} K. Uchiyama, One dimensional random walks killed on a finite set,   Stoch. Proc. Appl. {\bf 127} (2017) 2864-2899.
      
\bibitem {Uf.s}    K. Uchiyama,  Asymptotic behaviour of  a random walk  killed on a finite set, Potential Anal.  \textbf{ 46}(4), (2017), 689-703.

 \bibitem {Uexp}  K. Uchiyama, On the ladder heights of random walks attracted to stable laws of exponent 1, Electron. Commun. Probab. {\bf 23} (2018), no. 23, 1-12.  doi.org/10.1214/18-ECP122

 \bibitem {Uladd}  K. Uchiyama,  The potential function and ladder variables  of a recurrent random walk on $\mathbb{Z}$  with infinite variance. 
(preprint:  available at: http://arxiv.org/abs/1805.03971.)

  \bibitem {Uattrc}  K. Uchiyama, Asymptotically stable random walks
of index $1<\alpha<2$ killed on a finite set. Stoch. Proc.  Appl. {\bf 129}  (2019), 5151-5199. %https://doi.org/10.1016/j.spa.2019.02.006

  \bibitem {Uexit}  K. Uchiyama,  A note on the exit problem from an interval for  random walks oscillating on
  $\mathbb{Z}$ with  infinite variance, (2019) preprint.  http://arxiv.org/abs/1908.00303

    \bibitem {Urenw}  K. Uchiyama,  A  renewal theorem for relatively stable variables, to appear in Bull. London Math. Soc. (2020) 


\bibitem{VW}  V. A. Vatutin and V. Wachtel, Local probabilities for random walks conditioned to stay positive, Probab. Theory Rel. Fields, {\bf 143} (2009), 177-217


%\bibitem{WW}  E. T. Whittaker and G. N. Watson, A course of modern analysis, 4th ed. Cambridge Univ. Press,  Cambridge, (1927).

\bibitem {Zol}  V. M. Zolotarev, Mellin-Stieltjes  transform in probability theory, Theor. Probab.  Appl. (1957), 433-460.

\bibitem {Z} A. Zygmund, Trigonometric series, vol. 2, 2nd ed., Cambridge Univ. Press (1959)  

\end{thebibliography}
\end{document}